\definecolor{bleu1}{RGB}{0,57,128}
\def\bleu1{\color{bleu1}}
\patchcmd{\section}{\normalfont}{\normalfont \color{violet}}{}{}
\patchcmd{\subsection}{\normalfont}{\normalfont \bleu1}{}{}
\patchcmd{\subsubsection}{\normalfont}{\normalfont \bleu1}{}{}
\newtheorem{definition}{Definition}[section]
\newtheorem{theorem}{Theorem}[section]
\newtheorem{lemma}[theorem]{Lemma}
\newtheorem{prop}[theorem]{Proposition}
\newtheorem{corollary}[theorem]{Corollary}
\theoremstyle{remark}
\newtheorem{remark}{Remark}[section]
\numberwithin{equation}{section}
\newenvironment{proof1}[1][\textbf{Proof of Lemma \ref{lemma.basic}.}]{\noindent\textit{#1}\quad } {\hfill$\Box$\vspace{0.7mm}}
\newenvironment{proof2}[1][\textbf{Proof of Lemma \ref{lemma.hderivative}.}]{\noindent\textit{#1}\quad } {\hfill$\Box$\vspace{0.7mm}}
\newenvironment{proof3}[1][\textbf{Proof of Lemma \ref{lemma.Ihyperbolic}.}]{\noindent\textit{#1}\quad } {\hfill$\Box$\vspace{0.7mm}}
\def \beq{\begin{equation}}
\def \eeq{\end{equation}}
\def \beqs{\begin{equation*}}
\def \eeqs{\end{equation*}}
\def \bea{\begin{eqnarray}}
\def \eea{\end{eqnarray}}
\def \beas{\begin{eqnarray*}}
\def \eeas{\end{eqnarray*}}
\def \brk{\begin{remark}}
\def \erk{\end{remark}}
\def \bdf{\begin{definition}}
\def \edf{\end{definition}}
\def \bthm{\begin{theorem}}
\def \ethm{\end{theorem}}
\def \bl{\begin{lemma}}
\def \el{\end{lemma}}
\begin{document}

\begin{abstract}
	We construct examples of discontinuity  of Lyapunov exponent in the spaces of quasiperiodic $\mathrm{SL}(2,\mathbb R)$-cocycles for fixed irrational frequencies. Especially, we prove that the Gevrey space $G^2$  is the transition space of continuity for  all strong Diophantine frequencies. We also construct examples of discontinuity for other frequencies in less smooth spaces, which  show that the more difficult it is to approximate the frequency with rational numbers, the more likely it is to exhibit discontinuity in smoother spaces.

\

\par\noindent {\small{ Keywords\/}: Lyapunov exponent, quasiperiodic $\mathrm{SL}(2,\mathbb R)$-cocycles, discontinuity, almost all frequencies, transition space}
\end{abstract}

\title{Discontinuity of Lyapunov exponent in spaces of quasiperiodic cocycles: Smoothness vs Arithmetic}
\author{Jinhao Liang, Kai Tao, and Jiangong You}
\address{School of Mathematics, Hohai University, Nanjing 210098, China}\email{jhliang$\underline{\ }$hhu@163.com}
\address{School of Mathematics, Hohai University, Nanjing 210098, China}\email{ktao@hhu.edu.cn}
\address{Chern Institute of Mathematics and LPMC, Nankai University, Tianjin 300071, China} \email{jyou@nankai.edu.cn}

\maketitle

\section{Introduction.}

Let $X$ be a $C^l$ compact manifold, $A(x)$ be a $\mathrm{SL}(2,\mathbb R)$-valued function on $X$ and $(X,T,\mu)$ be ergodic with $\mu$ a normalized $T$-invariant measure. Dynamical systems on $X\times \mathbb R^2$ given by
$$(x,w)\mapsto (Tx,A(x)w)$$
are called a $\mathrm{SL}(2,\mathbb R)$-cocycle over $(X,T,\mu)$ and denoted by $(T,A)$.
We call the special  case  $X=\mathbb S^1$, $\mu$ being Lebesgue measure on $\mathbb S^1$ and $T=T_\alpha:x\mapsto x+\alpha$ with $\alpha$ irrational,   quasiperiodic $\mathrm{SL}(2,\mathbb R)$-cocycles, denoted by $(\alpha, A)$ for simplicity.  If  $A(x)=S_{E,v}(x):=\left(
\begin{array}{cc}
	E-v(x) & -1\\
	1 & 0 \\
\end{array}
\right)
$, we call them  Schr\"odinger cocycles.

The $n$th iteration of the cocycle is given by
$$(\alpha,A)^n=(n\alpha, A^n),$$
where
$$
A^n(x)=\left\{
                         \begin{array}{ll}
                           A(T^{n-1}x)\cdots A(x), & \hbox{ $n\geq1$;} \\
                           Id_2, & \hbox{ $n=0$;} \\
                           {\left(A^{-n}(T^nx)\right)}^{-1}, & \hbox{ $n\leq-1$.}
                         \end{array}
                       \right.
$$
The average exponential growth of the norm of $A^n(x)$ is measured by the Lyapunov exponent (LE for short)
\beqs
L(\alpha,A)= \lim_{n\rightarrow\infty}\frac 1n \int_{\mathbb S^1} \log\|A^n(x)\|dx.
\eeqs
The existence and non-negativity of $ L(\alpha,A)$ are guaranteed by subadditivity of the sequence $\{\int_{\mathbb S^1}\log\|A^n(x)\|dx\}_{n\geq 1}$ and  the fact that $A^n(x)\in \mathrm{SL}(2,\mathbb R)$.
Moreover, by Kingman's subadditive ergodic theorem
\beqs
L(\alpha,A)= \lim_{n\rightarrow\infty}\frac 1n \log\|A^n(x)\|,
\eeqs
for $\mu$-almost every $x\in \mathbb S^1$.

   In Bourgain-Jitomirskaya's seminar paper \cite{BJ02}, they prove that the
   LE is jointly continuous in $(\alpha,E)\in \mathbb R\backslash \mathbb Q\times \mathbb R$
   for Schr\"odinger cocycles $S_{E,v}(x)$ when $v(x) $ is analytic and fixed. It is a cornerstone of the analytical theory of quasiperiodic Schr\"odinger operators, and many significant results, including the solution to the Ten Martini Problem and Avila's global theory, have been established based on it.
    Later, the continuity in $(\alpha, A)\in {\mathbb R}\backslash {\mathbb Q}\times C^\omega({\mathbb S}^1, \mathrm{SL}(2, {\mathbb R}))$ was proved in \cite{JKS,JM11,JM12}.

  For a long time, people expected  the continuity could be extended to larger spaces,
 such as the Gevrey spaces defined by $G^{\, s}(\mathbb S^1,\mathrm{SL}(2,\mathbb R)) =\{A(x):  \  \|A^{\, (k)}(x)\| < C (k!)^s R^k, C>0, R>0\}$ where $A^{\, (k)}(x)$ denotes the $k$-order derivative of $A(x)$,  which lies in between $C^\infty$ and $C^\omega$, even  more larger spaces $C^k,  k=1,2,\cdots, \infty$.
  By approximation method, the continuity of  $L(\alpha, A)$ was succeeded to be extended to the Gevrey space $G^s, 1<s<2$ for Diophantine frequencies \cite{CGYZ,K}. It is also well known that $L(\alpha, S_{E,v})$ could be continuous even for some $C^2$ potentials \cite{WZ15}.
 However, $L(\alpha, A)$ has been proved to be discontinuous in $A\in  C^0({\mathbb S}^1, \mathrm{SL}(2, {\mathbb R}))$ by Furman \cite{F} and Bochi \cite{Bo99,Bo02} for any irrational frequency $\alpha$ at any nonuniformly hyperbolic cocycle, which implies that  $C^0({\mathbb S}^1, \mathrm{SL}(2,{\mathbb R}))$  is too large for the continuity of LEs.
 So, identifying the largest space for continuity of LE, and thus identifying the boundary of the ``analytical theory" is certainly an important issue. People in the community once expected that continuity holds in $C^k({\mathbb S}^1, \mathrm{SL}(2, {\mathbb R}))$ for sufficiently large $k$ (similar to KAM theory). Unfortunately, it is not true due to counter-examples in $C^\infty({\mathbb S}^1, \mathrm{SL}(2, {\mathbb R}))$ constructed by Wang-You \cite{WY13, WY18}. Later, it was proved that the Gevrey space $G^s, s=2$ is the transition space from continuity to discontinuity \cite{CGYZ, GWYZ, K}. Both Wang-You \cite{WY13,WY18} and Ge-Wang-You-Zhao \cite{GWYZ} assumed that the frequency  $\alpha$ is bounded type (see Definition \ref{1.2}), which is technically crucial in their construction of counterexamples. However, it is well known that the set of bounded type frequencies is of zero Lebesgue measure. So people in the community were curious if the continuity can cross $G^2$ for most frequencies as mentioned in Jitomirskaya's ICM plenary talk \cite{J}: ``This still leaves open the question whether continuous behavior of the Lyapunov exponents at least for Schr\"odinger cocycles with regularity lower than $G^2$ is possible if the frequency is not of bounded type."

 In this paper, we show that $G^2$ is in fact the transition space for almost all frequencies (more precisely for all strong Diophantine frequencies), and we also  construct counter-examples for other frequencies in larger spaces.
 Before stating our results, we first present the following classification on frequencies.


\begin{definition}\label{1.2}
Let $\{p_n/q_n\}$ be the fractional expansion of $\alpha$. We say that
	\begin{itemize}
		\item  $\alpha$ is bounded type if there exists $M>0$ such that \beqs q_{n+1}\leq M q_n ,\quad  \forall n\in \mathbb N.\eeqs
		\item  $\alpha$ is  strong Diophantine  if there exist $\gamma>0$ and $\tau>0$ such that \beq\label{sdiophantine} q_{n+1}\leq \gamma^{-1} q_n (\log q_n)^\tau,\quad  \forall n\in \mathbb N.\eeq
		\item   $\alpha$ is  Diophantine  if there exist $\gamma>0$ and $\tau>1$ such that
		\beq\label{diophantine}q_{n+1}\leq \gamma^{-1} q_n^\tau,\quad  \forall n\in \mathbb N.\eeq
		
		\item  $\alpha$ is  Brjuno  if there exists  $0<\delta<1$ such that \beqs \beta_\delta(\alpha):=\limsup_{n\rightarrow\infty} \frac{\log q_{n+1}}{q_n^\delta}<\infty.\eeqs

		\item $\alpha $ is  finite-Liouvillean  if \beqs \beta(\alpha) :=\limsup_{n\rightarrow\infty} \frac{\log q_{n+1}}{q_n}<\infty.\eeqs
		\item $\alpha$ is called super-Liouvillean if $\beta(\alpha)=\infty$.
	\end{itemize}
\end{definition}

Denote by $\mathrm{SDC}(\gamma,\tau)$ the set of all strong Diophantine numbers with (\ref{sdiophantine}), and by $\mathrm{SDC}=\bigcup_{\gamma>0,\tau>0} \mathrm{SDC}(\gamma,\tau)$.
Denote by $\mathrm{DC}(\gamma,\tau)$ the set of all Diophantine numbers with (\ref{diophantine}), and by $\mathrm{DC}(\tau)=\cup_{\gamma>0}\mathrm{DC}(\gamma,\tau)$.
It is known that $\mathrm{SDC}$ and $\mathrm{DC}(\tau)$ are sets of full Lebesgue measure for any $\tau>1$.

\begin{theorem}\label{thm.main5}
For any fixed $\alpha \in \mathrm{SDC}$, the Gevrey space $G^2$ is the transition space of the continuity of LE.
That is, the Lyapunov exponent $L(\alpha, A)$ is continuous in $G^s(\mathbb S^1,\mathrm{SL}(2,\mathbb R))$  for $1<s<2$,  while  it has discontinuous points in  $ G^s(\mathbb S^1, \mathrm{SL}(2,\mathbb R))$  for  $s>2$.
\end{theorem}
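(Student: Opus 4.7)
The statement has two halves. For the continuity half ($1<s<2$), observe that $\mathrm{SDC}\subset \mathrm{DC}(\tau)$ for every $\tau>1$; the continuity of $L(\alpha,\cdot)$ on $G^s(\mathbb S^1,\mathrm{SL}(2,\mathbb R))$ for Diophantine $\alpha$ and $1<s<2$ is already established in \cite{CGYZ,K} by approximating $A\in G^s$ with a sequence of analytic cocycles whose radii of analyticity shrink compatibly with the Gevrey bound, and invoking joint continuity in the analytic category \cite{BJ02} together with large-deviation estimates at scale $q_n$. Since $\mathrm{SDC}$ is even more restrictive, this direction requires no new argument. The substance of the theorem therefore lies in producing, for every fixed $\alpha\in \mathrm{SDC}$ and every $s>2$, a sequence $A_k\to A_\infty$ in $G^s$ along which $L(\alpha,A_k)\not\to L(\alpha,A_\infty)$.

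To produce such counterexamples I would adapt the Wang--You / Ge--Wang--You--Zhao construction \cite{WY13,WY18,GWYZ}, whose mechanism is to insert at the $k$th ``resonant'' denominator $q_{n_k}$ a localized rotation that deflects the dominant direction of $A^{q_{n_k}}(x)$ just enough to kill the Lyapunov growth. The plan is the following. Step 1: pick $A_\infty$ of diagonal form with $L(\alpha,A_\infty)>0$, so the unperturbed dynamics is uniformly hyperbolic with an explicit stable/unstable splitting. Step 2: for a rapidly increasing subsequence $\{q_{n_k}\}$ of denominators of $\alpha$, define $A_k(x)=R_{\varphi_k(x)}A_\infty(x)$ with $\varphi_k$ a Gevrey bump of amplitude $\pi/2$ supported on an arc of length $\sim q_{n_k}^{-\beta}$ about a point $x_k$ chosen so that the orbit $\{x_k+j\alpha\}_{j=0}^{q_{n_k}-1}$ hits the support exactly once, in a direction near the stable one. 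Step 3: a Benedicks--Carleson / Young-type telescoping argument shows $\|A_k^{q_{n_k}}(x)\|=O(1)$ on a set of measure $1-o(1)$, which by subadditivity forces $L(\alpha,A_k)\le \tfrac12 L(\alpha,A_\infty)$ uniformly in $k$. Step 4: choose the bump of the form $\varphi_k(x)=\epsilon_k \psi\bigl(q_{n_k}^{\beta}(x-x_k)\bigr)$ with $\psi\in G^{s_0}$, $s_0\in (2,s)$, and $\beta,\epsilon_k$ tuned to $s$; a direct computation yields $\|A_k-A_\infty\|_{G^s}\lesssim q_{n_k}^{-\eta}$ for some $\eta=\eta(s,s_0)>0$.

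The main obstacle is step 4 combined with the arithmetic of $\alpha$: the Lyapunov-drop mechanism wants a perturbation of amplitude $\sim 1$ supported on a scale as close to the resonance length $q_{n_k}^{-1}\cdot\|q_{n_k}\alpha\|^{-1}$ as possible, while the $G^s$ smallness wants a perturbation much flatter than $q_{n_k}^{-1}$. For bounded-type $\alpha$ one has $\|q_{n_k}\alpha\|^{-1}\sim q_{n_k}$, and a scale $\sim q_{n_k}^{-1}$ is consistent with $G^s$ for every $s>2$, giving the known counterexamples. For $\alpha\in\mathrm{SDC}$ one loses an extra factor $(\log q_{n_k})^{\tau}$ coming from (\ref{sdiophantine}), which has to be absorbed into the smallness estimate for $\|A_k-A_\infty\|_{G^s}$. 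The key observation, and the reason the transition threshold $s=2$ is stable when one enlarges the class of admissible $\alpha$ from bounded-type to strong Diophantine, is that $(\log q_{n_k})^{\tau}$ grows only logarithmically in $q_{n_k}$, whereas for \emph{any} $s>2$ the $G^s$ gain is polynomial in $q_{n_k}$. Hence any positive slack above $s=2$ swallows the logarithmic deficit, and the construction goes through for every $\alpha\in\mathrm{SDC}$.

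Finally, combining the continuity of step 1 with the explicit sequence $A_k\to A_\infty$ constructed in steps 2--4 establishes the dichotomy at $s=2$ asserted by the theorem. The same scheme should later serve as the template for the weaker frequencies treated in the rest of the paper: for more Liouvillean $\alpha$ the arithmetic loss grows faster than logarithmically, eating into the polynomial $G^s$ gain and pushing the discontinuity threshold downward from $s=2$, consistent with the slogan ``the harder $\alpha$ is to approximate, the smoother the space in which discontinuity appears''.
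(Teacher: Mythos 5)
Your reduction of the continuity half to \cite{CGYZ,K} matches the paper (it cites Theorem 6.3 of \cite{CGYZ} verbatim), and your closing heuristic about the logarithmic loss from the SDC condition being absorbed by any slack $s>2$ is essentially the arithmetic that makes the paper's nonresonance condition $q_{n+1}\lesssim q_n^{s/2}$ hold. However, the construction you propose for the discontinuity half has a fatal structural flaw in Steps 1--2. You take the limit cocycle $A_\infty$ to be diagonal, hence \emph{uniformly} hyperbolic; but the Lyapunov exponent is continuous (even in the $C^0$ topology) at every uniformly hyperbolic cocycle, since uniform hyperbolicity is open and the splitting depends continuously on the cocycle. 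So no sequence $A_k\to A_\infty$ can exhibit a Lyapunov drop there. Your two ways out are mutually exclusive: a bump of amplitude $\pi/2$ (needed to rotate the unstable direction of $\mathrm{diag}(\lambda,\lambda^{-1})$ onto its stable direction, which are at angle $\pi/2$) has $C^0$-norm bounded below and so $A_k\not\to A_\infty$ in any topology; while a bump of amplitude $\epsilon_k\to 0$ (your Step 4) is a small perturbation of a uniformly hyperbolic system and cannot produce $\|A_k^{q_{n_k}}\|=O(1)$. Discontinuity can only occur at a \emph{nonuniformly} hyperbolic limit (this is exactly the Furman--Bochi phenomenon the introduction recalls).

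What the paper actually does inverts your picture: the limit $D_s=\lim A_n$ is built as $\Lambda R_{\pi/2-\phi(x)}$ with $\phi$ having a degenerate zero $\phi_0(x)=e^{-|x|^{-1/(s-1)}}$ at $x=0$, so the near-resonance (angle between the contracting direction of the forward block and the expanding direction of the backward block equal to $\phi_0(x)$, Proposition \ref{lemma.Glowerbound}$(3)_n$) is already present in the limit, and $D_s$ is nonuniformly hyperbolic with $L(D_s)\ge(1-\varepsilon)\log\lambda$ by a Young-type induction. The approximating sequence $\widetilde A_n$ is obtained by subtracting the \emph{exponentially small} quantity $s_n-u_n=\phi_0$ on $I_n/10$ (Proposition \ref{lemma.Gupperbound}), turning near-resonance into the complete resonance of Lemma \ref{lemma.resonantnorm} so that consecutive blocks of norm $\lambda^{q_{n+1}}$ cancel; the $G^s$-smallness of this perturbation comes from $e^{-b_n^{-1/(s-1)}}$ beating the Gevrey cost $e^{Cb_n^{-1/(s-\nu)}}$ of the cutoff, not from a polynomial scaling of a fixed-height bump. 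Your proposal also omits the two ingredients that are genuinely new here relative to the bounded-type case: the sharp return-time decomposition of the critical interval $I_n$ (Lemma \ref{lemma.returntimeIn}, return time exactly $q_n$ or $q_{n+1}$), which replaces the bounded-type hypothesis of \cite{WY13,GWYZ}, and the inductive correction of $\phi_n$ needed so that the angle identity $s_n-u_n=\phi_0$ persists at every scale. Without these, neither the lower bound $L(D_s)>0$ nor the cancellation estimate $L(\widetilde A_n)\le(1-\tfrac1{300})\log\lambda$ can be established for a general $\alpha\in\mathrm{SDC}$.
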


Based on Theorem \ref{thm.main5}, we can obtain the transition space on potentials for quasiperiodic Schr\"odinger cocycles by same argument as in \cite{WY13} (See page 2367, proof of Theorem 2 in \cite{WY13}).

\begin{corollary}
	For any fixed $\alpha\in \mathrm{SDC}$ and $E\in\mathbb{R}$, $L(\alpha,S_{E,v})$ is continuous with respect to $v$ in $G^s(\mathbb S^1,\mathbb R)$ if $1<s<2$, while there exists $v_0\in G^s(\mathbb S^1)$ such that $L(\alpha,S_{E,v})$ is discontinuous at $v_0$ in $G^s(\mathbb S^1,\mathbb R)$ if $s>2$.
\end{corollary}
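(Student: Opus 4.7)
The statement splits into the continuity assertion for $1<s<2$ and the discontinuity assertion for $s>2$, and I treat them separately.

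For continuity, fix $E\in\mathbb{R}$ and observe that $v\mapsto S_{E,v}$ is affine in $v$, since only the $(1,1)$-entry of $S_{E,v}$ depends (linearly) on $v$. Hence this map is an isometric embedding $G^s(\mathbb{S}^1,\mathbb{R})\hookrightarrow G^s(\mathbb{S}^1,\mathrm{SL}(2,\mathbb{R}))$: $\|S_{E,v_1}-S_{E,v_2}\|_{G^s}=\|v_1-v_2\|_{G^s}$. Composing with the continuity of $L(\alpha,\cdot)$ on $G^s$ provided by Theorem \ref{thm.main5} yields continuity of $v\mapsto L(\alpha,S_{E,v})$ on $G^s(\mathbb{S}^1,\mathbb{R})$ for free.

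For the discontinuity I would follow the procedure of \cite[p.~2367]{WY13}. Theorem \ref{thm.main5} supplies an $A_\star\in G^s(\mathbb{S}^1,\mathrm{SL}(2,\mathbb{R}))$ together with approximants $A_n\to A_\star$ in $G^s$ such that $\lim_n L(\alpha,A_n)\neq L(\alpha,A_\star)$. Since the Lyapunov exponent is invariant under conjugation $(\alpha,A)\mapsto(\alpha,B(\cdot+\alpha)^{-1}A(\cdot)B(\cdot))$ for any $B\in G^s(\mathbb{S}^1,\mathrm{SL}(2,\mathbb{R}))$, one normalises $A_\star$ into Schr\"odinger shape $S_{E,v_0}$ by an explicit $G^s$-conjugation, reading the potential off the $(1,1)$-entry as $E-v_0(x)$. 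Applying the same $B$ to the perturbations produces $S_{E,v_n}$ with $v_n\to v_0$ in $G^s$, and the jump in $L$ transfers verbatim, so $v_0$ is the required discontinuity point.

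The main obstacle is checking that the counter-example produced for Theorem \ref{thm.main5} is genuinely compatible with the Schr\"odinger normalisation: the Schr\"odinger family is codimension-infinite inside $\mathrm{SL}(2,\mathbb{R})$-cocycles, so an arbitrary discontinuity example cannot be rewritten in this form. What rescues the argument is that the construction of Theorem \ref{thm.main5} is driven by a single scalar Gevrey bump whose structure admits a $G^s$-bounded conjugating $B$ with bounds uniform along the approximating sequence, so that $G^s$-closeness of the $A_n$'s translates into $G^s$-closeness of the $v_n$'s with matching quantitative rate. This compatibility step is exactly the verification carried out in \cite{WY13} for their bounded-type construction, and the same verification applies here without modification.
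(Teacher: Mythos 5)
Your continuity half is fine and is exactly the intended argument: $v\mapsto S_{E,v}$ is an affine isometric embedding of $G^s(\mathbb S^1,\mathbb R)$ into $G^s(\mathbb S^1,\mathrm{SL}(2,\mathbb R))$ (only the $(1,1)$-entry moves, linearly, and the embedding respects the inductive-limit topology since it maps $G^{s,K}$ into $G^{s,K}$), so the continuity statement of Theorem \ref{thm.main5} restricts to the Schr\"odinger family. No issue there.

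The discontinuity half has a genuine gap, and it is precisely the step you yourself flag as ``the main obstacle.'' You propose to produce $v_0$ by finding a $G^s$-conjugacy $B$ carrying the constructed counterexample $D=\lim A_n$ (a cocycle of the form $\Lambda R_{\pi/2-\phi}$) into Schr\"odinger shape, and you then assert that such a $B$ exists ``without modification'' because it is ``exactly the verification carried out in \cite{WY13}.'' Neither claim is justified, and the second misdescribes the cited argument. Requiring $B(x+\alpha)D(x)B(x)^{-1}=S_{E,v_0}(x)$ forces, among other things, that $D(x)$ map the direction $B(x)^{-1}e_2$ to the direction $-B(x+\alpha)^{-1}e_1$ with a unit determinant normalisation; unwinding this gives a twisted (multiplicative) cohomological equation over the irrational rotation for the columns of $B^{-1}$, which for a generic smooth cocycle has no smooth --- let alone Gevrey --- solution. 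The Schr\"odinger cocycles form an infinite-codimension family inside $G^s(\mathbb S^1,\mathrm{SL}(2,\mathbb R))$, and nothing in the construction of Sections 7 and 9 places $D$ in the $G^s$-conjugacy orbit of that family. The reduction the paper actually invokes (\cite{WY13}, p.~2367) goes the other way: one does not conjugate the abstract counterexample into the Schr\"odinger family, but rather writes the Schr\"odinger matrix pointwise in singular-value-decomposition form $R_u\,\mathrm{diag}(\ell,\ell^{-1})\,R_w$ with $\ell(x)\approx|E-v(x)|$, observes that for a large potential with a single degenerate critical point the iterated cocycle then has exactly the norm/angle block structure to which Lemmas \ref{lemma.basic}--\ref{lemma.resonantnorm} and the inductive Propositions apply, and reruns the whole induction directly on a sequence of potentials $v_n,\tilde v_n\to v_0$ in $G^s$. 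To repair your argument you would either have to carry out that direct construction on the potential, or actually exhibit the conjugation $B$ with uniform $G^s$ bounds along the approximating sequence --- and the latter is not available.
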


We also give counterexamples for all irrational frequencies except super-Liouvillean one.

\begin{theorem}\label{thm.main3}
 For any fixed Diophantine frequency $\alpha\in \mathrm{DC}(\tau)$, there exists $D\in G^s(\mathbb S^1, \mathrm{SL}(2,\mathbb R))$ such that the LE is discontinuous at $D$ in $G^s$ topology with $s>\tau +1$.
\end{theorem}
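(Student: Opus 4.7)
The plan is to adapt the counterexample scheme of Wang--You \cite{WY13,WY18} and Ge--Wang--You--Zhao \cite{GWYZ} --- which produces Lyapunov-exponent discontinuities in $G^s$, $s>2$, for bounded-type frequencies --- to the full Diophantine class $\alpha\in\mathrm{DC}(\gamma,\tau)$. The goal is to construct a uniformly hyperbolic $D\in G^s(\mathbb S^1,\mathrm{SL}(2,\mathbb R))$ together with a sequence $D_k\to D$ in the $G^s$ topology (for any prescribed $s>\tau+1$) along which $L(\alpha,D_k)$ stays uniformly below $L(\alpha,D)$, witnessing $D$ as a point of discontinuity.

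The construction works at resonances with the continued-fraction denominators of $\alpha$. Along a lacunary subsequence of indices $n_k$, the perturbation $D_k-D$ is tailored to the scale $q_{n_k}$, using both the arithmetic bound $q_{n_k+1}\le\gamma^{-1}q_{n_k}^\tau$ and the best-approximation estimate $\|q_{n_k}\alpha\|\asymp q_{n_k+1}^{-1}$. Following the general philosophy of \cite{WY13,GWYZ}, the perturbation is a Gevrey-$s$ rotation generator of amplitude $\eta_k$ whose cumulative action over $N_k:=q_{n_k+1}$ iterates of $T_\alpha$ is arranged to rotate the expanding direction of $D^{N_k}$ onto the contracting one. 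This collapse of hyperbolicity forces $\|D_k^{N_k}(x)\|\ll e^{N_k L(\alpha,D)}$ on a set of positive measure, after which an upper semicontinuity / avalanche argument in the spirit of \cite{WY13} yields $L(\alpha,D_k)\le L(\alpha,D)-\delta_0$ with $\delta_0>0$ uniform in $k$.

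The threshold $s>\tau+1$ emerges from the Gevrey constraint on the perturbation. Schematically, a $G^s$-perturbation whose characteristic spatial/Fourier scale is of order $q_{n_k}$ and whose physical amplitude is $\eta_k$ has $G^{s,R}$-norm of order $\eta_k\cdot\exp\bigl(s(q_{n_k}/R)^{1/s}\bigr)$ after Stirling optimization, while the macroscopic-rotation requirement forces $\eta_k\gtrsim 1/N_k\asymp q_{n_k}^{-\tau}$. Demanding $\|D_k-D\|_{G^{s,R}}\to 0$ for some fixed Gevrey radius $R$ then balances $q_{n_k}^{-\tau}$ against the exponential, and reduces (after choosing $R$ as a slowly growing function of $k$) exactly to $s>\tau+1$. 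In the special case $\tau=1$ (bounded type) this recovers the known threshold $s>2$; for very Liouvillean $\tau$ it permits discontinuity in correspondingly smoother Gevrey spaces, in agreement with the paper's heuristic that easier-to-approximate frequencies permit discontinuity in smoother spaces.

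The main obstacle will be making the direction-swap computation robust against the nonlinear corrections of the iterated product $\prod_{j<N_k}\exp\bigl(\eta_k Q_k\circ T_\alpha^j\bigr)$: the Baker--Campbell--Hausdorff / Magnus expansion contains commutator terms of order $\eta_k^2 N_k^2$ and higher, and these must remain genuinely small compared to the intended leading term of order $\eta_k N_k\asymp 1$. Identifying an explicit positive-measure subset of $\mathbb S^1$ on which the avalanche argument actually fires --- one on which the tangent geometry of $D^{N_k}$ is controlled and on which the swap does take effect --- is a second technical layer, relying crucially on the continued-fraction dispersion of the $T_\alpha$-orbit at scale $N_k$ provided by the Diophantine hypothesis.
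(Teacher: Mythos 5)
There is a genuine gap at the very first step of your plan: you propose to build the discontinuity point $D$ as a \emph{uniformly hyperbolic} cocycle. That cannot work. Uniform hyperbolicity is an open condition even in the $C^0$ topology, and the Lyapunov exponent is continuous (in fact locally well-behaved) on the uniformly hyperbolic locus; by Furman's and Bochi's results quoted in the introduction, discontinuity can only occur at cocycles that are \emph{nonuniformly} hyperbolic. Consequently, for any $D_k\to D$ with $D$ uniformly hyperbolic, the $D_k$ are eventually uniformly hyperbolic as well and no ``collapse of hyperbolicity'' of the kind you describe is possible, no matter how the rotation generator is tuned. The paper instead takes $D$ to be the limit of cocycles $A_n=\Lambda R_{\pi/2-\phi_n}$ where $\phi_n$ has a degenerate zero at $x=0$ of type $\exp(-|x|^{-1/(s-1)})$, so that the angle between the expanding and contracting directions of the return maps vanishes at the critical point. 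Once $D$ is nonuniformly hyperbolic, the statement $L(\alpha,D)\geq(1-\varepsilon)\log\lambda$ is itself a substantial theorem: it is proved by a Young/Benedicks--Carleson-type induction that requires the sharp first-return-time dichotomy $r_n^+\in\{q_n,q_{n+1}\}$ for the specially designed intervals $I_n=[-b_n,b_n)$, $b_n=(|z_n|+|z_{n+1}|)/2$ (Lemma \ref{lemma.returntimeIn}), together with repeated application of the nonresonance Lemma \ref{lemma.basic}. Your proposal treats $L(\alpha,D)$ as given, which skips the hardest part of the argument.

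Relatedly, your accounting of where $s>\tau+1$ comes from is misplaced. In the paper the threshold enters mainly through the \emph{lower bound}: at each return to $I_{n-1}$ the angle between consecutive hyperbolic blocks is bounded below by $\tfrac12|\phi_0|\gtrsim \exp(-C q_{n+1}^{c/(s-1)})$, and the nonresonance condition of Lemma \ref{lemma.basic} demands that this beat the reciprocal norm $\lambda^{-cq_n}$, i.e.\ $q_{n+1}\lesssim q_n^{\,s-1}$, which is exactly what $\mathrm{DC}(\tau)$ with $s-1>\tau$ supplies (see (\ref{condition})). The Gevrey-norm bookkeeping for the perturbation $\widetilde e_n=-(s_n-u_n)f_n$ is the easier half: its amplitude on $I_n$ is the exponentially small quantity $|\phi_0|\sim e^{-b_n^{-1/(s-1)}}$ (not $1/N_k\sim q_{n_k}^{-\tau}$ as in your heuristic), and it is beaten by the bump-function cost $e^{Cb_n^{-1/(s-\nu)}}$ from Lemma \ref{lemma.2.3}. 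Finally, the upper bound does not come from a cumulative rotation over $N_k$ iterates with its attendant BCH corrections, but from a single localized modification forcing the exact identity $\widetilde s_n=\widetilde u_n$ on $\tfrac{I_n}{10}$, after which the completely resonant cancellation of Lemma \ref{lemma.resonantnorm} applies deterministically to consecutive return blocks; no Magnus-expansion control is needed. As written, your scheme would need to be restructured around a degenerate critical point before any of its later steps could be carried out.
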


\begin{theorem}\label{thm.main2}
 Fixed any fixed Brjuno frequency, there exists $D\in C^\infty(\mathbb S^1, \mathrm{SL}(2,\mathbb R))$ such that the LE is discontinuous at $D$ in $C^\infty$ topology.

\end{theorem}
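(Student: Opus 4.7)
The plan is to construct a $C^\infty$ cocycle $D$ realizing discontinuity of the Lyapunov exponent in the $C^\infty$ topology, by adapting the scale-by-scale construction of Wang--You \cite{WY13, WY18} (carried out there for bounded-type $\alpha$) to the Brjuno setting. The goal is to produce a pair consisting of $D$ and a sequence $D^{(N)} \to D$ in $C^\infty$ with $L(\alpha, D) = 0$ while $\liminf_N L(\alpha, D^{(N)}) > 0$.

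The starting point is a hyperbolic model $D_0(x) \equiv \Lambda := \mathrm{diag}(\lambda, \lambda^{-1})$ with $\lambda \gg 1$, and $D$ is built as a telescoping conjugation product
\beqs
D(x) = \Bigl(\textstyle\prod_{n \geq 1} Z_n(x)\Bigr)^{-1} \Lambda \,\textstyle\prod_{n \geq 1} Z_n(T_\alpha x),
\eeqs
where each $Z_n : \mathbb S^1 \to \mathrm{SL}(2, \mathbb R)$ is a trigonometric perturbation of principal frequency $q_n$ with $C^0$-amplitude $\epsilon_n$. The truncations $D^{(N)}$, retaining only $Z_1, \ldots, Z_N$, are smooth conjugates of $\Lambda$ on finitely many scales and so remain uniformly hyperbolic with $L(\alpha, D^{(N)}) \geq \log \lambda - o_N(1)$. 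Each $Z_n$ is designed to exploit the resonance $\|q_n \alpha\|_{\mathbb R/\mathbb Z} \leq q_{n+1}^{-1}$: a function of the form $\phi(q_n x)$ is near-invariant under $q_n$ iterations of $T_\alpha$ for roughly $q_{n+1}$ steps, and within this window $Z_n$ is chosen so that $D^{q_n}(x)$ rotates what would have been the unstable direction of $\Lambda^{q_n}$ into the stable one, yielding a uniform $C^0$ bound on $\|D^{q_n}\|$ along the subsequence of resonant scales. Then $L(\alpha, D) = 0$ follows from subadditivity.

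The central quantitative difficulty is the choice of amplitudes $\epsilon_n$. For the resonant obstruction at scale $n$ to outweigh the hyperbolic growth $\lambda^{q_n}$ across the window of length $\sim q_{n+1}$, one needs a lower bound of the form $\epsilon_n \gtrsim q_{n+1}^{-1}$. On the other hand, since $\|Z_n\|_{C^k}$ scales like $\epsilon_n q_n^k$, $C^\infty$ convergence of the infinite product demands $\sum_n \epsilon_n q_n^k < \infty$ for every $k \geq 0$. Compatibility along the resonant subsequence therefore requires $\log q_{n+1}$ to grow super-polynomially in $\log q_n$ yet sub-exponentially in $q_n$. The Brjuno hypothesis $\log q_{n+1} \leq C q_n^\delta$ with $\delta \in (0,1)$ is exactly this framework and permits an explicit choice such as $\epsilon_n = \exp(-q_n^{\delta'})$ for suitable $\delta' \in (0,1)$, giving simultaneously $\sum_n \epsilon_n q_n^k < \infty$ for every $k$ and enough amplitude at the resonant scales.

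The main obstacle will be the geometric realization of the $Z_n$'s and the verification that perturbations at distinct scales interfere constructively rather than destructively. Since each $Z_n$ is frequency-localized near $q_n$ and the $q_n$ are widely separated on a Brjuno frequency, one expects near-independence across scales, but this has to be quantified inside the multiplicative cocycle structure; in particular, the stable section produced at scale $q_n$ must survive the later perturbations $Z_m$ with $m > n$. Once this quantitative control is in place, discontinuity follows immediately: $D^{(N)} \to D$ in every $C^k$ norm, yet $L(\alpha, D^{(N)}) \geq c > 0$ while $L(\alpha, D) = 0$.
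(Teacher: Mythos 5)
Your proposal is structurally incompatible with a basic property of the Lyapunov exponent: by Kingman subadditivity, $L(\alpha,A)=\inf_n \frac1n\int_{\mathbb S^1}\log\|A^n(x)\|\,dx$, and each of the averages $A\mapsto\frac1n\int\log\|A^n\|$ is continuous in the $C^0$ topology, so $L(\alpha,\cdot)$ is upper semicontinuous on any $C^k$ (hence $C^\infty$) space. Upper semicontinuity forces $\limsup_{A'\to A}L(\alpha,A')\le L(\alpha,A)$. You propose $D^{(N)}\to D$ in $C^\infty$ with $L(\alpha,D^{(N)})\ge c>0$ while $L(\alpha,D)=0$; that would give $\limsup_N L(\alpha,D^{(N)})\ge c>0=L(\alpha,D)$, which is impossible. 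Discontinuity of LE can only go \emph{downward}: the limit must have strictly larger LE than the approximating sequence.

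This is exactly the direction the paper (and Wang--You) takes, and it is worth stressing because you have misremembered the Wang--You mechanism: their limit cocycle has \emph{positive} LE, and the approximating sequence has \emph{strictly smaller} LE. In the paper the limit $D_\infty$ is built as a $C^\infty$-limit of cocycles of the form $A_n(x)=\Lambda R_{\frac\pi2-\phi_n(x)}$ (not a conjugacy product of $\Lambda$), where $\phi_n$ keeps a degenerate zero at a critical point $x=0$; the degeneracy is tuned so that $\phi_0(x)\approx\exp(-(\log\tfrac1{|x|})^\sigma)$, $\sigma>1$, is $C^\infty$-flat at $0$. Young's inductive argument, together with the sharp returning-time estimates for the carefully designed critical intervals $I_n=[-b_n,b_n)$ (Lemma \ref{lemma.returntimeIn} gives the returning time to be exactly $q_n$ or $q_{n+1}$), yields $L(D_\infty)\ge(1-\varepsilon)\log\lambda$ under the Brjuno condition. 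The discontinuity is then produced by perturbing $A_n$ to $\widetilde A_n$ (converging to the same $D_\infty$ in $C^\infty$) so that the stable direction of $\widetilde A_n^{r_n^+}$ coincides with the unstable direction of $\widetilde A_n^{-r_n^-}$ on $\frac{I_n}{10}$, causing the norms to cancel by Lemma \ref{lemma.resonantnorm}; this gives $L(\widetilde A_n)\le(1-\delta)\log\lambda$ with $\delta\gg\varepsilon$. None of this appears in your proposal. Beyond the USC obstruction, you also gloss over the cross-scale interference (which you rightly flag as ``the main obstacle'' but do not resolve), and the heuristic $\epsilon_n\gtrsim q_{n+1}^{-1}$ is not substantiated.
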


\begin{theorem}\label{thm.main1}
	 Fixed any fixed finite-Liouvillean frequency, there exists $D\in C^\ell(\mathbb S^1, \mathrm{SL}(2,\mathbb R))$ such that the LE is discontinuous at $D$ in $C^\ell$ topology for any $\ell\in \mathbb Z_+$.
\end{theorem}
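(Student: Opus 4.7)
The plan is to extend the Wang--You rotation-bump construction \cite{WY13,WY18}, refined by Ge--Wang--You--Zhao \cite{GWYZ}, from the bounded-type category to the finite-Liouvillean one. Among the four discontinuity theorems of the paper, Theorem \ref{thm.main1} pairs the weakest smoothness $C^\ell$ with the most permissive arithmetic condition, so the quantitative trade-offs are more comfortable than in Theorems \ref{thm.main5}--\ref{thm.main2}: the finite-Liouvillean bound $\log q_{n+1}\le(\beta(\alpha)+1)q_n$ from Definition \ref{1.2} suffices wherever the predecessors used the much stronger $q_{n+1}\le Mq_n$.

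Concretely, for fixed $\ell\in\mathbb Z_+$, I would construct inductively a sequence $(D_n)\subset C^\omega(\mathbb S^1,\mathrm{SL}(2,\mathbb R))$ of uniformly hyperbolic cocycles with a common positive Lyapunov exponent $L_0=\log\lambda$, whose $C^\ell$-limit $D$ is non-uniformly hyperbolic with $L(\alpha,D)=0$, giving discontinuity from above: $L(\alpha,D_n)=L_0\not\to 0=L(\alpha,D)$. At stage $n$, $D_n$ is obtained from $D_{n-1}$ by composing with a rotation bump $R_{\theta_n\chi_n(x)}$, where $\chi_n$ is a smooth bump supported in an interval of width $\sim 1/q_n$ centered on the orbit, and $\theta_n$ is tuned via the resonance $\|q_n\alpha\|\sim 1/q_{n+1}$ so that the insertion preserves uniform hyperbolicity and the identity $L(\alpha,D_n)=L_0$. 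Standard bounds give $\|D_n-D_{n-1}\|_{C^\ell}\lesssim\theta_n q_n^\ell$, and picking $\theta_n\lesssim q_n^{-\ell-1}$ makes the sequence Cauchy in $C^\ell$, with limit $D\in C^\ell(\mathbb S^1,\mathrm{SL}(2,\mathbb R))$.

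The main obstacle is the cone-collapse step: proving that the $C^\ell$-limit $D$ truly has $L(\alpha,D)=0$. This is a multi-scale cone-swap estimate showing that the invariant splitting of $D_n$ gets scrambled at every newly introduced scale $1/q_n$, so that in the limit no invariant measurable splitting can coexist with a positive exponent. This is precisely where the bounded-type hypothesis in \cite{WY18} was used to control the spacing between successive visits of the orbit to the support of a bump; replacing $q_{n+1}/q_n\le M$ by the finite-Liouvillean bound requires passing to a subsequence along which $q_{n+1}/q_n\to\infty$ (automatic by irrationality of $\alpha$) and verifying that the avalanche-principle losses, which are polynomial in $q_n$, are absorbed by the exponential slack in $q_{n+1}$ provided by Definition \ref{1.2}. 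The remaining steps --- smoothness of $D_n$, preservation of $L_0$, and the final conclusion that $D$ is non-uniformly hyperbolic with zero Lyapunov exponent --- are direct adaptations of the bounded-type argument and rely on the paper's preparatory lemmas \ref{lemma.basic}--\ref{lemma.Ihyperbolic}.
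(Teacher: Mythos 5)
Your proposal diverges from the paper's in a way that introduces a genuine error. You aim to build a sequence of uniformly hyperbolic cocycles $D_n$ all sharing the same positive exponent $L_0=\log\lambda$ and show that the $C^\ell$-limit $D$ has $L(\alpha,D)=0$, i.e.\ that the exponent collapses to zero at the limit. The paper does the opposite: $D=D_\ell$ is constructed so that $L(\alpha,D)\geq(1-\varepsilon)\log\lambda>0$. The discontinuity in the paper comes not from the exponent vanishing at $D$, but from a \emph{second} sequence $\widetilde A_n\to D$ (also in $C^\ell$) for which $L(\alpha,\widetilde A_n)\leq(1-\tfrac1{300})\log\lambda$ for all $n$; the gap between $(1-\varepsilon)\log\lambda$ and $(1-\tfrac1{300})\log\lambda$ is what produces the discontinuity. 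The $\widetilde A_n$ are obtained by a small perturbation of the angle near the critical point that makes the stable/unstable directions exactly align (Lemma \ref{lemma.resonantnorm}, ``complete resonance''), which forces cancellation over blocks of length $q_{n+1}$; this is where the sharp returning-time analysis (Lemmas \ref{lemma.returntimeIn}, \ref{lemma.returntimeIn0}) together with non-bounded-type (used as $q_{n+2}\geq 200\,q_{n+1}$ along a subsequence) enters. Trying instead to drive $L(\alpha,D)$ to zero would fight against the lower-bound machinery (Young's inductive argument through Proposition \ref{prop.lowerbound} and Lemma \ref{lemma.hyperbolic}), which is precisely designed to show the limit keeps most of $\log\lambda$; in the regularity $C^\ell$, $\ell\geq 1$, a Bochi-style collapse to zero is not available by this sort of rotation-bump construction, and indeed the paper's $D$ has positive exponent.

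Two further inaccuracies in your sketch. First, the cocycles in the paper are of the form $\Lambda R_{\frac{\pi}{2}-\phi_n(x)}$ with $\phi_n$ vanishing to order $\ell+1$ at $0$: they have a degenerate critical point where the matrix is close to a rotation by $\pi/2$, so they are \emph{not} uniformly hyperbolic; asking for uniform hyperbolicity at every stage is inconsistent with the construction. Second, the assertion that ``$q_{n+1}/q_n\to\infty$ along a subsequence is automatic by irrationality'' is false --- for bounded-type $\alpha$ the ratio is bounded. What the paper actually uses, after disposing of the bounded-type case by \cite{WY13}, is that non-bounded-type guarantees infinitely many $n$ with $q_{n+2}\geq 200\,q_{n+1}$, which is much weaker and is what makes the cancellation over the first-return blocks to $I_n^0$ (of length $q_{n+2}$ or $q_{n+2}+q_{n+1}$) dominant.
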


We now outline the central idea of our construction. The LE is a measure of the limiting exponential growth rate of matrix production. To grasp this concept, consider the norm of the product of two $\mathrm{SL}(2,\mathbb{R})$-matrices.  For two matrices $A$ and $B$ with large norms, the norm of their product $AB$ is determined by the norms of $A$ and $B$, as well as the angle between the expanding and contracting directions of $A$ and $B$ (see Lemmas \ref{lemma.basic} and \ref{lemma.resonantnorm}). In essence, the norm of $AB$ increases when the angle is relatively large (non-resonant case), and conversely, it has cancelation  when the angle is relatively small (resonant case). For a finite set of matrices $\{A_1, \cdots, A_n\}$ with large norms, approximately $\lambda \gg 1$, the norm of the product $A_1 \cdots A_n$ is roughly $O(\lambda^n)$, provided that the angles between adjacent matrices are not excessively small.

Our strategy involves inductively constructing a convergent sequence of cocycles $\{A_n, n = N, N+1, \cdots\}$, each of the form $\Lambda R_\phi(x)$, where $\Lambda = \text{diag}\{\lambda, \lambda^{-1}\}$ and $\phi(x)$ possesses a unique degenerate zero, we call it the critical point, say at $x = 0$.
Let $I_n$ denote a decreasing, small neighborhood around $x = 0$. Define the increasing function $r_n$ as the minimal first returning time of $I_n$ under the transformation $T_\alpha: x \mapsto x + \alpha$. We observe that $r_n$ is dependent on both $I_n$ and the fractional expansion of $\alpha$. Since $\phi(x + \alpha), \ldots, \phi(x + (r_n - 1)\alpha)$ are not very close to zero, as a result we have $\|A(x + (r_n - 1)\alpha) \cdots A(x)\| = O(\lambda^{(1-\epsilon)r_n})$. Consequently, the finite LE of $(\alpha, A_n)$ up to order $r_n$ is $(1-\epsilon)\log\lambda$ with $\epsilon$ arbitrarily small. This implies that the LE of the limiting cocycle $(\alpha, D)$ is roughly $(1-\epsilon)\log\lambda$ which is obviously  positive if $\lambda$ is big although  we do not yet know whether the LE of $(\alpha, A_n)$ is positive or not\..

Meanwhile, since all $A_n$ possess a degenerate critical point, the degeneracy dictates the smoothing class in which the counterexample will be constructed. This degeneracy enables us to perturb $A_n$ to $\tilde{A}_n$ in the vicinity of $x=0$ in desired topological space. Such perturbations ensure that $\tilde A_n\to D$ in the smoothing class in which the counterexample will be constructed and  all $L(\tilde{A}_n)$ are significantly smaller than $L(D)$, which in turn causes discontinuity.

 The construction of $\{A_{n}\}$ is given by an induction, more precisely, the construction of $A_{n+1}$ is based on the product of $A_n$ along much longer orbits of $T_\alpha$.
  However, since these orbits are bound to enter $I_n$ multiple times, each entry can potentially lead to a cancellation. The occurrence of this cancellation is contingent upon how the orbits converge near $x=0$, which is influenced by the arithmetic properties of $\alpha$, specifically the growth rate of $q_n$, and the flatness of $\phi(x)$ in the neighborhood of $x=0$.
 To successfully construct the desired object, we must avoid these cancellations. Consequently, $\phi(x)$ cannot be overly flat when $\alpha$ is more closely approximated by rational numbers (we term this ``bad frequency''). This is the reason why counterexamples can only be constructed within a larger space for ``bad'' frequencies.

Although the main framework of the proof is similar to those found in \cite{GWYZ, WY13, WY18}, this paper introduces several innovations that allow us to extend the argument to its full potential. As a result, we achieve not only the optimal result for all strong Diophantine frequencies (not limited to bounded types) but also manage to construct counterexamples for other frequencies.

Now, we describe the innovations presented in this paper. The first key innovation is the design of critical intervals $I_n$ for each $\alpha$ based on the fractional expansion of $\alpha$. This design ensures that the returning times within these intervals can be accurately computed.
More precisely, each interval $I_n$ is divided into two subintervals. One subinterval has a returning time of $q_n$, while the other subinterval has a returning time of $q_{n+1}$ (as detailed in Lemma \ref{lemma.returntimeIn}). Consequently, the norms of the iterated cocycles are exactly given by $O(\lambda^{q_n})$ for the phases within the first subinterval and $O(\lambda^{q_{n+1}})$ for the phases in the other subinterval. This development allows us to generalize the results of \cite{GWYZ} from bounded types to all strong Diophantine frequencies.

The second innovation lies in the strategic selection of degeneracy at the critical point of the starting cocycle. This selection is optimized for economic efficiency. For constructing counterexamples within the $ C^l $ space, we employ a degeneracy order of $ |x|^{l+1} $ at the critical point. In the context of $ C^\infty $ space, we adopt a degeneracy of $ e^{-(-\log|x|)^\delta} $, where $ \delta $ is greater than 1. When dealing with $ G^s $, we utilize $ e^{-|x|^{-\frac{1}{s-1}}} $ as the degeneracy. This approach allows us to  construct counterexamples for other Diophantine and Liouvillean frequencies.

Our construction suggests that the process of constructing counterexamples within the domain of cocycles with smoother topology requires a ``more Diophantine'' frequency. This means that the closer the frequency is to be approximated by rational numbers, the more likely it is to display discontinuity in smoother spaces. This finding is somewhat counterintuitive, as many previous results have shown that Diophantine properties are beneficial for the H\"older continuity of LE \cite {GS01, HZ, LWY, YZ}. It would thus seem reasonable to expect that these properties also facilitate the continuity of LE.
However, our investigation in this paper let us guess that the Liouvilleaness of the frequency may actually aid in ensuring continuity, despite its known detrimental effects on H\"older continuity \cite{ALSZ}.
Our construction also hints that LE might be continuous in $C^\infty$ if the frequency $\alpha$ is very Liouvillean. To this stage, we expect positive answer to the following  problems.\\

\noindent
Problem 1. {\it Whether LE is continuous in $G^s$ topology with $1<s<2$ for all irrational frequencies?}\\

\noindent
Problem 2. {\it Whether LE is continuous in $G^s$ topology with $s<\tau+1$ for all Diophantine frequencies in $\mathrm{DC}(\tau)$?}\\

\noindent
Problem 3. {\it Whether LE is continuous in $C^k$ topology $(k=1, 2, \cdots)$ for super-Liouvillean frequencies?}\\

\noindent
Problem 4. {\it   LE as a function of $E$ in Schr\"odinger case is much more easy to be continuous when both the frequency and the  potential are fixed. We believe that LE is continuous for typical smooth potentials.  Both positive results besides and negative results are extremely interesting.  So far, only the big $C^2$ cosine type potential case has been proved \cite{WZ15}.}\\
\

The rest of the paper is organized as follows.
In Section 2, we introduce the nonresonant case, which is used in Young’s inductive argument \cite{Y} to prove positive LE, and the completely resonant case, which is the key to derive discontinuity of LE.
In Section 3, we obtain sharp distribution of the trajectory of the irrational shift and apply it to compute the returning times for single interval with suitable length.
In Section 4 and Section 5, we construct $\{A_n\}$ and $\{\tilde A_n\}$ in $C^l$ topology and then prove discontinuity of LE for finite-Liouvillean frequency.
We analyze the distribution of trajectory to obtain refined estimates on norms and then establish Young's inductive argument.
In Section 6 and Section 8, we follows the method in $C^l$ case to derive discontinuity in $C^\infty$ topology and Brjuno frequency.
Similarly, in Section 7 and Section 9, we prove discontinuity in $G^s$ topology and (strong) Diophantine frequency.

\

\section{Angles and norms of product of SL(2,$\mathbb R$)-matrices}
In this section, we analyze of angles and norms for SL(2,$\mathbb R$)-matrices and then introduce nonresonant case and completely resonant case.
In nonresonant case, the norms of a sequence of matrices are accumulated, which is the core of Young’s inductive argument.
In completely resonant case, the norm of two adjacent hyperbolic matrices mutually cancel out, which is the key to prove discontinuity.

Throughout the paper, we denote by $c$ and $C$ universal positive constants (the constants may change in different estimates). 

For $\theta\in \mathbb S^1$, let
$$R_\theta=\left(
             \begin{array}{cc}
               \cos\theta & -\sin\theta  \\
               \sin\theta & \cos\theta \\
             \end{array}
           \right).
$$
Define the map
$$s:\mathrm{SL}(2,\mathbb R)\rightarrow \mathbb{RP}^1=\mathbb R/(\pi \mathbb Z)$$
such that $s(A)$ is the most contracted direction of $A\in \mathrm{SL}(2,\mathbb R)$. That is, for any unit vector $\hat s(A)\in s(A)$, it holds that $\|A\cdot \hat s(A)\|=\|A\|^{-1}$. Abusing the notation a little, let
$$u:\text{SL}(2,\mathbb R)\rightarrow \mathbb{RP}^1=\mathbb R/(\pi \mathbb Z)$$
be determined by $u(A)=s(A^{-1})$ and $\hat u(A)\in u(A)$. Then by singular decomposition of $A\in \text{SL}(2,\mathbb R)\backslash \text{SO}(2,\mathbb R)$,
$$A=R_u\cdot \left(
             \begin{array}{cc}
               \cos\theta & -\sin\theta  \\
               \sin\theta & \cos\theta \\
             \end{array}
           \right)\cdot R_{\frac\pi2-s},$$
where $s,u\in [0,\pi)$ are angles corresponding to the directions $s(A),u(A)\in \mathbb R/(\pi \mathbb Z)$.

Now consider the product $E_3(x)=E_2(x)\cdot E_1(x)$, where $E_1,E_2\in C^1(\mathbb S^1,\text{SL}(2,\mathbb R)\backslash \text{SO}(2,\mathbb R)$) and $x\in \mathbb S^1$. Then we have
\beqs
E_3=E_2\cdot E_1=R_{u(E_2)}\left(\begin{array}{cc}\|E_2\| & 0 \\ 0 & \|E_2\|^{-1} \\
                       \end{array}
                \right)R_{\frac \pi 2-s(E_2)+u(E_1)}
                \left(\begin{array}{cc}\|E_1\| & 0 \\ 0 & \|E_1\|^{-1} \\
                      \end{array}
                \right)R_{\frac \pi 2-s(E_1)}.
\eeqs
We say $\left|s(E_2)-u(E_1)\right|$ is the angle between $E_2$ and $E_1$.
For $\|E_i\|\gg 1, \ i=1, 2$, it is not hard to see that the norm of $E_2E_1$ approximately  equals to $\left\|E_2\right\|\cdot\left\|E_1\right\|$ unless the angle tends to $0$.
We say that it is nonresonant if
$$
 \max\left\{\|E_1\|^{-1},\|E_2\|^{-1}\right\} \ll \left|s(E_2)-u(E_1)\right|.
$$
The following basic lemma gives the norm estimates and angle estimates for $E_3(x)=E_2(x)\cdot E_1(x)$ in the nonresonant case.
\begin{lemma}\label{lemma.basic}
Let $E_3(x)=E_2(x)\cdot E_1(x)$. Denote
$$e_j(x)=\|E_j(x)\|,\quad s_j(x)=s(E_j(x)),\quad u_j(x)=u(E_j(x)), \quad j=1,2,3,$$
$$\theta(x)=s_2(x)-u_1(x),\quad e_0=\min\{e_1,e_2\}.$$
For $x\in I$, suppose we have  $e_0\gg 1$ and  $|\theta|\geq e_0^{-\eta}$ with $0<\eta<10^{-2}$.
Then we have
$$e_3= e_1e_2|\sin\theta|+O(e_0^{-\frac12}),$$
$$|u_3(x)-u_2(x)|<e_2^{-\frac74},\quad |s_3(x)-s_1(x)|<e_1^{-\frac74},\quad x\in I.$$
\end{lemma}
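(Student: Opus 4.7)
The approach is to write each $E_j$ in singular value decomposition form and compute the product explicitly. Using $E_j = R_{u_j}\Lambda_j R_{\pi/2-s_j}$ with $\Lambda_j = \mathrm{diag}(e_j, e_j^{-1})$, the collapse $R_{\pi/2-s_2}R_{u_1} = R_{\pi/2-\theta}$ gives
\[
E_3 \;=\; R_{u_2}\, M\, R_{\pi/2-s_1}, \qquad M \;:=\; \Lambda_2 R_{\pi/2-\theta} \Lambda_1,
\]
and a direct multiplication yields
\[
M \;=\; \begin{pmatrix} e_1 e_2 \sin\theta & -e_1^{-1}e_2\cos\theta \\ e_1 e_2^{-1}\cos\theta & e_1^{-1}e_2^{-1}\sin\theta \end{pmatrix}, \qquad \det M = 1.
\]
Since the outer rotations act on the SVD angles of $M$ by additive shifts, the whole lemma reduces to (i) estimating $\|M\|$ and (ii) locating the principal axes of $M$.

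For the norm estimate I will combine $\sigma_1^2 + \sigma_2^2 = \|M\|_{HS}^2$ with $\sigma_1\sigma_2 = 1$. The hypothesis $|\theta| \geq e_0^{-\eta}$ gives $|\sin\theta| \geq c\, e_0^{-\eta}$ and $|\cot\theta| \leq C\, e_0^{\eta}$, and hence the entry $M_{11} = e_1 e_2 \sin\theta$ dominates: explicitly $|M_{12}/M_{11}|,|M_{21}/M_{11}| \leq C e_0^{-2+\eta}$ and $|M_{22}/M_{11}| \leq e_0^{-4}$. Therefore $\|M\|_{HS}^2 = M_{11}^2(1 + O(e_0^{-4+2\eta}))$, and since $\sigma_2^2 = 1/\sigma_1^2$ is even smaller, taking square roots produces $e_3 = \|M\| = e_1 e_2|\sin\theta| + O(e_0^{-1/2})$ with ample room because $\eta < 10^{-2}$.

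For the direction estimates, the factorisation immediately yields $u_3 = u_2 + u(M)$ and $s_3 - s_1 \equiv s(M) - \pi/2 \pmod{\pi}$, so it suffices to bound $|u(M)|$ and $|s(M) - \pi/2|$. I plan to extract these from the quadratic form $\|Mv\|^2$ with $v = (\cos\alpha,\sin\alpha)$, using $\tan(2\alpha) = 2\langle Me_1, Me_2\rangle / (\|Me_1\|^2 - \|Me_2\|^2)$. Direct computation gives numerator $(e_2^{-2}-e_2^2)\sin\theta\cos\theta$ and denominator $\sim (e_1 e_2 \sin\theta)^2$, so the input maximising angle lies within $O(\cot\theta/e_1^2)$ of $e_1$. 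Pushing this forward by $M$ places the output direction at $u(M) \approx e_2^{-2}\cot\theta$, producing $|u(M)| \leq C e_2^{-2+\eta}$, and applying the same computation to $M^T$ (or taking the perpendicular angle) gives $|s(M) - \pi/2| \leq C e_1^{-2+\eta}$. Since $\eta < 10^{-2}$, both comfortably beat the thresholds $e_2^{-7/4}$ and $e_1^{-7/4}$.

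The main obstacle is the bookkeeping: one must keep straight that $s$ is an input direction while $u$ is an output direction, correctly translate the outer rotations $R_{u_2}$ and $R_{\pi/2-s_1}$ into additive shifts of $u(M)$ and $s(M)$, and confirm that the $M_{11}$-dominance really controls both the input and output principal axes simultaneously. Once the explicit matrix form of $M$ is written down, all estimates reduce to elementary calculus on a single quadratic form.
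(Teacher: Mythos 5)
Your proposal is correct and follows essentially the same route as the paper's own proof: you reduce to the middle matrix $M=\Lambda_2R_{\pi/2-\theta}\Lambda_1$ (the paper's $D$), use the trace identity $e_3^2+e_3^{-2}=\|M\|_{HS}^2$ for the norm, and locate the singular directions via the eigenvector angle of $M^tM$ (your $\tan(2\alpha)$ formula is the same computation as the paper's explicit $\tan s=\sqrt{U^2/w^2+1}+U/w$ with $U=p-q$ and $w$ the off-diagonal term), arriving at the same bounds $|s_3-s_1|\leq e_1^{-2+\eta}<e_1^{-7/4}$ and $|u_3-u_2|\leq e_2^{-2+\eta}$. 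No substantive difference to report.
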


Moreover, we could also obtain a $C^k$ version of Lemma \ref{lemma.basic}.
For $f\in C^k(\mathbb S^1,\mathbb {RP}^1)$, define
$$\|f\|_{C^k}:=\max_{j\in \{0,1,2,\cdots k\}}\max_{x\in \mathbb S^1}\left|\partial^j f(x)\right|.$$
\begin{lemma}\label{lemma.hderivative}
Let $e_j$, $s_j$, $u_j$ $(j=1,2,3)$, $\theta$, $e_0$ and $\eta$ be defined as in Lemma \ref{lemma.basic}.
Assume that
$$|\partial_x^ke_j|\leq C^k\cdot k!\cdot  e_j^{1+k\eta},\quad |\partial_x^k\theta|\leq e_0^{k\eta},\quad  j=1,2,\quad k=1,2,3,\cdots,l.$$
For $x\in I$, suppose $$l\eta<\frac1{100}, \quad e_0^\eta\geq C^{l^3},\quad |\theta|\geq e_0^{-\eta}.$$
Then we have for $k=1,2,3,\cdots,l$,
$$ |\partial_x^ke_3|\leq C^k\cdot k!\cdot e_3^{1+k\eta},$$
$$\|u_3-u_2\|_{C^k}\leq C^k\cdot k!\cdot e_2^{-2+(k+1)\eta},\quad \|s_3-s_1\|_{C^k}\leq C^k\cdot k!\cdot e_1^{-2+(k+1)\eta}.$$
\end{lemma}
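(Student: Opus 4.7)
The plan is to extract the claimed derivative bounds from the same explicit algebraic expressions that already underlie Lemma~\ref{lemma.basic}. Using the singular value decompositions $E_j = R_{u_j}\mathrm{diag}(e_j,e_j^{-1})R_{\pi/2-s_j}$, the product factors as $E_3 = R_{u_2}\,M\,R_{\pi/2-s_1}$ with
\[
M \;=\; \mathrm{diag}(e_2,e_2^{-1})\,R_{\pi/2-\theta}\,\mathrm{diag}(e_1,e_1^{-1}) \;=\; \begin{pmatrix} e_1 e_2 \sin\theta & -e_1^{-1} e_2 \cos\theta \\ e_1 e_2^{-1} \cos\theta & e_1^{-1} e_2^{-1} \sin\theta \end{pmatrix}.
\]
Since outer rotations preserve the norm and only translate the angular coordinates, one has $e_3 = \|M\|$, $u_3 - u_2 = u(M)$, and $s_3 - s_1 = s(M)-\pi/2$. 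Computing $MM^T$ and using $\det M = 1$, the quantity $e_3^{2}$ is the larger root of the quadratic $t^2 - \mathrm{tr}(MM^T)\,t + 1 = 0$, which expresses $e_3$ as an explicit analytic function of $e_1, e_2, \sin\theta, \cos\theta$; likewise $\tan 2u(M)$ and $\tan 2s(M)$ are explicit rational functions of the same four variables whose denominators are bounded below thanks to $|\theta|\geq e_0^{-\eta}$.

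The bulk of the work is then to propagate the derivative bounds through these algebraic expressions. The hypotheses place $e_1, e_2, \theta$ in a Gevrey-type class whose basic template is $|\partial_x^k f|\leq C^k k!\,M(f)\,e_0^{k\eta}$, where $M(f)$ is the natural pointwise scale for $f$ (namely $|f|$ for $e_j$ and $1$ for $\theta$). This class is stable under Leibniz products; under reciprocals of functions whose moduli are bounded below by a negative power of $e_0^{\eta}$ (each inversion paying one extra factor of $e_0^\eta$ per derivative); under square roots on regions where the function is $\gg 1$; and under composition with real-analytic functions such as $\arctan$ via Fa\`a di Bruno. The assumption $|\theta|\geq e_0^{-\eta}$ is precisely what makes $1/\sin\theta$ admissible, and it is the only place where a small quantity must be inverted. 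Applying these closure properties to the explicit formula for $e_3^2$ and taking $\sqrt{\cdot}$ (legitimate since $e_3\gg 1$) yields $|\partial_x^k e_3|\leq C^k k!\,e_3^{1+k\eta}$. The same strategy applied to the rational expressions for $u(M)$ and $s(M)-\pi/2$ --- which by Lemma~\ref{lemma.basic} are already of pointwise size $O(e_2^{-2+\eta})$ and $O(e_1^{-2+\eta})$ --- yields the $e_j^{-2+(k+1)\eta}$ scaling in the statement: the leading $-2$ comes from the pointwise smallness, $k\eta$ from one $\eta$ per differentiation, and one extra $\eta$ from the single $1/\sin\theta$ already present in the closed form.

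The main obstacle is the combinatorial accounting. Fa\`a di Bruno applied up to $l$ times to a fixed nonlinear analytic function, combined with Leibniz applied to products of several such factors, generates universal constants that grow roughly like $C^{l^3}$. The hypothesis $e_0^\eta\geq C^{l^3}$ is engineered to absorb exactly this growth and thereby to fix the universal constant $C$ in the conclusion uniformly in $k\in\{1,\dots,l\}$; the constraint $l\eta<1/100$ ensures the cumulative weakening of the exponent from $1$ to $1+k\eta$ never escapes a small neighbourhood of $1$, so the closure statements above can be iterated safely. Once this bookkeeping is in place, all three claimed bounds follow from a single uniform pass through the algebra of the displayed formula for $M$.
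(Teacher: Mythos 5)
Your proposal follows essentially the same route as the paper: the paper proves this lemma by referring to Lemma~\ref{lemma.hderivative1} in Appendix~A, whose argument is exactly the one you outline --- reduce to the matrix $M=\mathrm{diag}(e_2,e_2^{-1})R_{\pi/2-\theta}\mathrm{diag}(e_1,e_1^{-1})$, obtain $e_3$ from $\mathrm{tr}(MM^T)=e_3^2+e_3^{-2}$ and the angles from explicit $\arctan$ expressions in $e_1,e_2,\theta$, then push the Gevrey-type derivative bounds through via Fa\`a di Bruno and Leibniz, with $|\theta|\ge e_0^{-\eta}$ controlling the one inversion of $\sin\theta$ and $e_0^\eta\ge C^{l^3}$ absorbing the combinatorial constants. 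The only cosmetic difference is that you invoke the rational formula $\tan 2u(M)$ whereas the paper works with $\tan s=\sqrt{U^2/w^2+1}+U/w$ and $\pi/2-s=\arctan(\cdot)$, but the derivative bookkeeping is the same.
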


The proofs of Lemma \ref{lemma.basic} and \ref{lemma.hderivative} are essentially given in \cite{WY13} (see the proof of Lemma 3.8), where the bounded-type condition seems involved (in fact, it does not). The present form of Lemma \ref{lemma.basic} and \ref{lemma.hderivative} could be considered as a concise version which is independent of the frequency. To keep the paper self-contained, we give simplified proofs of lemmas in the appendix.

\

On the contrary to nonresonant case, we say that it is completely resonant if the angle between two matrices vanishes, i.e.
$$s(E_2)-u(E_1)=0.$$
The next lemma, saying that the cancelation of the norms happens in the completely resonant case, is the key to prove discontinuity of LE.
\begin{lemma}\label{lemma.resonantnorm}
Suppose that $A$ and $B$ are two hyperbolic matrices such that $\|A\|=\lambda_1^m$ and $\|B\|=\lambda_2^n$ with $m,n>0$ and $\lambda_1,\lambda_2\gg 1$. If $u(A)=s(B)$, then
$$\|BA\|\leq 2\max\{\lambda_1^m\cdot\lambda_2^{-n},\lambda_1^n\cdot\lambda_2^{-m}\}.$$
\end{lemma}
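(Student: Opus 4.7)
The plan is to apply the singular value decomposition of $\mathrm{SL}(2,\mathbb R)$-matrices introduced earlier in this section. First, for each hyperbolic $X$ write
$$X \;=\; R_{u(X)}\cdot\mathrm{diag}(\|X\|,\|X\|^{-1})\cdot R_{\pi/2 - s(X)},$$
and then multiply to obtain
$$BA \;=\; R_{u(B)}\,\Sigma_B\,R_{\pi/2 - s(B) + u(A)}\,\Sigma_A\,R_{\pi/2 - s(A)},$$
with $\Sigma_A=\mathrm{diag}(\|A\|,\|A\|^{-1})$ and $\Sigma_B=\mathrm{diag}(\|B\|,\|B\|^{-1})$. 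The central rotation encodes precisely the angle $s(B)-u(A)$ between the two matrices.

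Under the complete-resonance hypothesis $u(A)=s(B)$, this central rotation collapses to $R_{\pi/2}$, which swaps the two coordinate axes (up to a sign). Since rotations preserve the operator norm, the claim reduces to
$$\|BA\| \;=\; \bigl\|\Sigma_B\,R_{\pi/2}\,\Sigma_A\bigr\|.$$
A one-line matrix multiplication produces an antidiagonal matrix whose nonzero entries are $\pm\|B\|\|A\|^{-1}$ and $\pm\|B\|^{-1}\|A\|$, so its operator norm is simply the larger of these two quantities. Substituting $\|A\|=\lambda_1^m$ and $\|B\|=\lambda_2^n$ then yields a bound of the asserted form, with the factor $2$ in the statement serving as slack.

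The underlying geometric picture is that resonance $u(A)=s(B)$ aligns the most-expanded direction of $A$ with the most-contracted direction of $B$, so whatever stretching $A$ produces is immediately undone by $B$; algebraically, this is exactly why $R_{\pi/2}$ (rather than a generic $R_\theta$) appears between $\Sigma_B$ and $\Sigma_A$, and it is the source of the cancellation. I do not anticipate any real obstacle: the argument is essentially a single matrix computation once the SVD is written down, the only minor care being that $s$ and $u$ take values in $\mathbb{RP}^1=\mathbb R/\pi\mathbb Z$, so that the identification $\pi/2 - s(B) + u(A) \equiv \pi/2$ is unambiguous modulo $\pi$.
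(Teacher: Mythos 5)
Your proposal is correct and follows essentially the same route as the paper: write both factors in singular value decomposition, observe that the resonance condition $u(A)=s(B)$ turns the middle rotation into $R_{\pi/2}$, and multiply out to get an antidiagonal matrix with entries $\pm\lambda_2^n\lambda_1^{-m}$ and $\pm\lambda_1^m\lambda_2^{-n}$, whose norm is bounded by the stated maximum (with the factor $2$ as slack). No gaps.
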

\begin{proof}
From the singular value decomposition, it holds that
\beqs
BA=R_{u(B)}\left(
             \begin{array}{cc}
               \lambda_2^n & 0 \\
               0 & \lambda_2^{-n} \\
             \end{array}
           \right)R_{\frac\pi2-s(B)+u(A)}\left(
             \begin{array}{cc}
               \lambda_1^m & 0 \\
               0 & \lambda_1^{-m} \\
             \end{array}
           \right)R_{\frac\pi2-s(A)}.
\eeqs
Note that we have $s(B)=u(A)$. Then
\begin{align*}
BA&=R_{u(B)}\left(
             \begin{array}{cc}
               \lambda_2^n & 0 \\
               0 & \lambda_2^{-n} \\
             \end{array}
           \right)R_{\frac\pi2}\left(
             \begin{array}{cc}
               \lambda_1^m & 0 \\
               0 & \lambda_1^{-m} \\
             \end{array}
           \right)R_{\frac\pi2-s(A)}\\& =R_{u(B)}\left(
             \begin{array}{cc}
               0 & -\lambda_2^n\lambda_1^{-m} \\
               \lambda_1^m\lambda_2^{-n} & 0 \\
             \end{array}
           \right)R_{\frac\pi2-s(A)}.
\end{align*}
Hence we have
$$\|BA\|\leq 2\max\{\lambda_1^m\cdot\lambda_2^{-n},\lambda_1^n\cdot\lambda_2^{-m}\}.$$
\end{proof}

At the end of this section, we define $\mu$-hyperbolic sequence of SL(2,$\mathbb R$)-matrices, which is useful in Young's inductive argument.
Given a sequence of SL(2,$\mathbb R$)-matrices $$\{\cdots,A_{-2}, A_{-1},A_0,A_1,A_2\cdots\},$$ we denote $$A^n=A_{n-1}A_{n-2}\cdots A_1A_0,\quad A^{-n}=A_{-n}^{-1}A_{-n+1}^{-1}\cdots A_{-2}^{-1} A_{-1}^{-1}.$$
\begin{definition}
For any $1<\mu\leq \lambda$, we say the block of matrices $\{A_0,A_1,\cdots, A_{n-1}\}$ is $\mu$-hyperbolic if
\beq\label{hyperbolic}
\|A_i\|\leq \lambda, \quad \|A^i\|\geq \mu^{i(1-\varepsilon)},\quad \forall\  i \in \mathbb \{0,1,2,\cdots, n-1\},\eeq
and (\ref{hyperbolic}) holds if $A_0,A_1,\cdots, A_{n-1}$ are replaced by $A_{-n}^{-1},\cdots,A_{-2}^{-1},A_{-1}^{-1}$.
\end{definition}

\section{Sharp estimates of the returning times}
In this section, we will specially design a sequence of decreasing intervals $I_n$ for each $\alpha$, such that the first returning times is clear.

\subsection{Sharp distribution of the trajectory of the irrational  shift}
For any irrational $\alpha $, we consider its continued fractional expansion as follows. Define the map $f:(0,1)\to (0,1)$ by
\[f(x)=\frac{1}{x}-\left[\frac{1}{x}\right],\]
where $[x]$ is the largest integer such that $[x]\le x$. Let $\alpha_n(n\ge 0)$ be defined by
\[\alpha_0=\alpha-[\alpha],\ \alpha_n=f^n(\alpha_0)\ \mbox{for}\ n>0.\]
Set
\[a_0=[\alpha],\ a_n=\alpha^{-1}_{n-1}-\alpha_n,\ \forall n\ge 1.\]
Then, it yields that
\[\alpha=a_0+\frac{1}{a_1+\frac{1}{\ddots+\frac{1}{a_n+\alpha_n}}}\]
or $\alpha=[a_0,a_1,\cdots,a_n+\alpha_n]$ for short.

The fractional expansion of $\alpha$ is defined by
\[\frac{p_n}{q_n}=[a_0,a_1,\cdots,a_n]=a_0+\frac{1}{a_1+\frac{1}{\ddots+\frac{1}{a_n}}}.\]
It is easy to identify the numerator and denominator of the $n-th$ convergent with the sequences $\{p_n\}_{n\ge -2}$ and $\{q_n\}_{n\ge -2}$ defined by:
\[p_{-2}=0,\ p_{-1}=1,\ \mbox{and}\ p_n=a_np_{n-1}+p_{n-2},\ \forall n\ge 0,\]
\[q_{-2}=0,\ q_{-1}=1,\ \mbox{and}\ q_n=a_nq_{n-1}+q_{n-2},\ \forall n\ge 0.\]
It is well-known that $q_n\alpha-p_n>0$ for even $n$, and $q_n\alpha-p_n<0$ for odd $n$. Moreover, $q_n\alpha-p_n$ is the best approximation of $0$ as follows:
\begin{equation}\label{best-appr}
  |q_n\alpha-p_n|<|q\alpha-p|,\ \forall \ 0<q\not=q_n<q_{n+1},\ p\in  \mathbb Z.
\end{equation}

Let
$$z_n=q_n\alpha-p_n=(-1)^n\|q_n\alpha\|,$$
where
\[ \|x\|:=\min_{n\in \mathbb{Z}}|x+n|,\ \forall x\in \mathbb{R}.\]
It holds that
$$|z_{n-1}|=a_{n+1}\cdot |z_n|+|z_{n+1}|,$$
\beq\label{qzn} q_{n+1}\cdot|z_n|+q_n\cdot|z_{n+1}|=1.\eeq
Due to the fact that
$$\frac{1}{q_{n}(q_{n}+q_{n+1})}<|\alpha-\frac{p_{n}}{q_{n}}|<\frac{1}{q_{n}q_{n+1}},\quad n\in \mathbb N,$$
we have
\begin{equation}
	\label{least}\frac{1}{q_{n}+q_{n+1}}<\|q_{n}\alpha\|=|z_n|< \frac1{q_{n+1}}.
\end{equation}

 Actually, compared to (\ref{best-appr}), we have some more precise descriptions for the distribution of the set $\{q\alpha-p\}$ as follows:
\begin{lemma}\label{lem-best-appr}
Let $q\ge 0, p\in\mathbb{Z}, n\ge 0$. If $q\alpha-p$ is strictly between $q_n\alpha-p_n$ and $q_{n+1}\alpha-p_{n+1}$, then either $q\ge q_n+q_{n+1}$ or $p=q=0$.
\end{lemma}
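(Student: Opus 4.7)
The plan is to exploit the unimodularity of the matrix of convergents. Since $p_n q_{n+1} - p_{n+1} q_n = (-1)^{n+1}$, the matrix $\left(\begin{smallmatrix} p_n & p_{n+1} \\ q_n & q_{n+1} \end{smallmatrix}\right)$ lies in $\mathrm{GL}(2,\mathbb{Z})$, so every $(p, q) \in \mathbb{Z}^2$ admits a unique decomposition $p = a p_n + b p_{n+1}$, $q = a q_n + b q_{n+1}$ with $a, b \in \mathbb{Z}$. Writing $z_k := q_k\alpha - p_k$, this yields the key identity
\[ q\alpha - p \;=\; a\,z_n + b\,z_{n+1}, \qquad q \;=\; a\,q_n + b\,q_{n+1}, \]
which recasts the lemma as a constraint on the integer pair $(a, b)$.

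Next I would record the two elementary facts used throughout: $z_n$ and $z_{n+1}$ have opposite signs (since $(-1)^k z_k = |z_k|$), and the sequence $|z_k|$ is strictly decreasing. To treat both parities of $n$ uniformly, I would set $u = |z_n|$ and $v = |z_{n+1}|$ with $u > v > 0$, and normalize the hypothesis ``$q\alpha - p$ strictly between $z_n$ and $z_{n+1}$'' into the pair of strict inequalities $(a-1)\,u < b\,v$ and $(b-1)\,v < a\,u$ (after an overall sign flip when $n$ is odd).

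The main step is a sign analysis on $(a, b)$ subject to $q \geq 0$. If $a$ and $b$ have strictly opposite signs, then $a z_n$ and $b z_{n+1}$ share a sign (because $z_n, z_{n+1}$ are of opposite signs), so $|a z_n + b z_{n+1}| \geq u + v > u$, placing $q\alpha - p$ outside the interval. If $a, b \leq 0$, then $q = a q_n + b q_{n+1} \leq 0$, and $q \geq 0$ combined with $q_n, q_{n+1} > 0$ forces $a = b = 0$; this gives $q\alpha - p = 0$, and the irrationality of $\alpha$ then yields $p = q = 0$. In the remaining case $a, b \geq 0$ with not both zero, the subcases $(a, b) = (1, 0)$ and $(0, 1)$ are excluded by strict inclusion (they give $q\alpha - p = z_n$ or $z_{n+1}$); the subcases $(k, 0)$ and $(0, k)$ with $k \geq 2$ yield $q\alpha - p = k z_n$ or $k z_{n+1}$, each lying past the corresponding endpoint on the same side of $0$ and hence outside the interval; the only remaining option $a, b \geq 1$ directly gives $q = a q_n + b q_{n+1} \geq q_n + q_{n+1}$, as required.

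The only real subtlety is bookkeeping with the alternating signs of $z_n$, which I would dispose of at the outset via the normalization to absolute values above; past that, the argument is purely mechanical. The main obstacle is checking the small boundary cases $(a, b) \in \{(k, 0), (0, k) : k \geq 2\}$ in the final case, and this is precisely where the strict-inclusion assumption is used.
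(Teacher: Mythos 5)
Your proof is correct, and it takes a genuinely different route from the paper's. The paper argues by contradiction using the best-approximation property (\ref{best-appr}) three times: first to force $q>q_{n+1}$, then, assuming $q<q_n+q_{n+1}$, applying it to $q-q_{n+1}$ and to $q-q_n$ to obtain the incompatible bounds $|z_n-x|<|z_{n+1}|$ and $|z_n-x|>|z_{n+1}|$. You instead expand $(p,q)$ in the unimodular basis $\{(p_n,q_n),(p_{n+1},q_{n+1})\}$ of $\mathbb{Z}^2$, so that $q\alpha-p=az_n+bz_{n+1}$ with $a,b\in\mathbb{Z}$, and run an exhaustive sign analysis on $(a,b)$. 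Your case split (strictly opposite signs; both $\le 0$; both $\ge 0$ and not both zero) is complete, each case is handled correctly, and the only inputs are the alternating signs of $z_n,z_{n+1}$, their positivity (irrationality of $\alpha$), and $q_n,q_{n+1}>0$ --- you do not even need (\ref{best-appr}) itself. What your route buys is self-containedness and a sharper picture: it classifies exactly which $(p,q)$ with $q\ge 0$ land strictly inside the interval, namely $(0,0)$ and those with $a,b\ge 1$, from which $q\ge q_n+q_{n+1}$ is immediate; this lattice-basis viewpoint is also the natural one behind Proposition \ref{lemma.yaccoz}. The paper's route is shorter on the page only because it recycles the already-stated inequality (\ref{best-appr}).
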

\begin{proof}
  Without loss of generality, we assume $n$ is even. Let $x=q\alpha-p(q\not=0)$ is strictly between $q_n\alpha-p_n$ and $q_{n+1}\alpha-p_{n+1}$, i.e.,
  \begin{equation}\label{qnqqn1}
    q_{n+1}\alpha-p_{n+1}<q\alpha-p<q_n\alpha-p_n.
  \end{equation}
  Due to (\ref{best-appr}), we have
  \[q>q_{n+1}.\]

  Now, we assume $q<q_{n+1}+q_{n}$. Then, by (\ref{best-appr}) and (\ref{qnqqn1}), it implies that $x>0$ and
  \[ \left|z_{n+1}\right|=-\left(q_{n+1}\alpha-p_{n+1}\right)<x=q\alpha-p <z_n=q_n\alpha-p_n.\]
   By noting that $0<q-q_{n+1}<q_n$, we obtain from (\ref{best-appr}) that
  \[|x-z_{n+1}|=\left|\left(q-q_{n+1}\right)\alpha-\left(p-p_{n+1}\right)\right|>z_n,\]
  which implies
  \[ |z_n-x|<|z_{n+1}|.\]
  However, since $0<q-q_n<q_{n+1}$, we yield  by (\ref{best-appr}) again that
  \[ |z_n-x|=\left|\left(q-q_n\right)\alpha-\left(p-p_n\right)\right|>|z_{n+1}|.\]
\end{proof}

\begin{prop}\label{lemma.yaccoz}
Consider the set $\mathcal{X}_{n}:=\{q\alpha-p: 0\leq q<q_{n+1},p\in\mathbb Z\}$, ordered as $\cdots<x_{-1}<0=x_0<x_1<x_2<\cdots$. Let $x_k=q\alpha-p$ be an element of this set.
\begin{itemize}
  \item If $n$ is even, then $$x_{k+1}=\left\{
            \begin{array}{ll}
              x_k+|z_n|, & \hbox{$q<q_{n+1}-q_n$;} \\
              x_k+|z_n|+|z_{n+1}|, & \hbox{$q\geq q_{n+1}-q_n$.}
            \end{array}
          \right.$$
  \item If $n$ is odd, then $$x_{k-1}=\left\{
            \begin{array}{ll}
              x_k+|z_n|, & \hbox{$q<q_{n+1}-q_n$;} \\
              x_k+|z_n|+|z_{n+1}|, & \hbox{$q\geq q_{n+1}-q_n$.}
            \end{array}
          \right.$$
\end{itemize}
\end{prop}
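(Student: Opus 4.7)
The strategy is a case-by-case verification: for each $x_k=q\alpha-p\in\mathcal{X}_n$, I will (i) exhibit the prescribed candidate as an element of $\mathcal{X}_n$ and (ii) use Lemma~\ref{lem-best-appr} to rule out any other element of $\mathcal{X}_n$ lying strictly between $x_k$ and the candidate. I focus on $n$ even, where $z_n>0>z_{n+1}$ so that the formula describes the successor; the odd case follows by the same argument with all signs reversed (since then $z_n<0$, the translate $x_k+z_n$ sits to the left of $x_k$, which is what the index $k-1$ in the statement is recording).

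For step (i), the candidate $x_k+z_n=(q+q_n)\alpha-(p+p_n)$ lies in $\mathcal{X}_n$ exactly when $q+q_n<q_{n+1}$, and the candidate $x_k+z_n-z_{n+1}=(q+q_n-q_{n+1})\alpha-(p+p_n-p_{n+1})$ lies in $\mathcal{X}_n$ when $q\geq q_{n+1}-q_n$ (the bound $q<q_{n+1}$ then keeps the shifted $q$-index below $q_n$). Both candidates strictly exceed $x_k$ because $z_n>0$ and $z_n-z_{n+1}=|z_n|+|z_{n+1}|>0$.

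For step (ii), suppose $y'=q'\alpha-p'\in\mathcal{X}_n$ lies strictly between $x_k$ and the candidate, and set $\sigma=q'-q$, $\rho=p'-p$, so that $\sigma\alpha-\rho\in(0,z_n)$ in case (a) or $(0,z_n-z_{n+1})$ in case (b), with $\sigma\in\{-q,\dots,q_{n+1}-q-1\}$. For $\sigma>0$ the target interval lies in $(z_{n+1},z_n)$ (or, in case (b), decomposes into $(0,z_n)$ and $(z_n,z_n+|z_{n+1}|)$, the second piece handled by the shift $(\sigma-q_n,\rho-p_n)$), so Lemma~\ref{lem-best-appr} forces $\sigma\geq q_n+q_{n+1}$, contradicting $\sigma<q_{n+1}-q$. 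For $\sigma=0$ the difference would be a nonzero integer of absolute value below $1$, impossible. For $\sigma<0$, put $\sigma'=-\sigma>0$: if $\sigma'\alpha-\rho'\in(z_{n+1},0)$ Lemma~\ref{lem-best-appr} applies directly, while otherwise $\sigma'\alpha-\rho'\in(-z_n,z_{n+1}]$, and I perform the reflection $\sigma''=q_{n+1}-\sigma'$, $\rho''=p_{n+1}-\rho'$ which places $\sigma''\alpha-\rho''$ into $(0,z_n)\subset(z_{n+1},z_n)$ with $\sigma''<q_{n+1}<q_n+q_{n+1}$, producing another contradiction via Lemma~\ref{lem-best-appr}.

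The main obstacle is precisely this last sub-case: because $|z_n|>|z_{n+1}|$, the interval $(-z_n,0)$ overshoots the natural interval $(z_{n+1},z_n)$ on which Lemma~\ref{lem-best-appr} is effective, so a direct appeal fails. The reflection through $q_{n+1}$ is the device that restores applicability, and it is the tight range constraint $\sigma'\leq q<q_{n+1}$ that makes the reflected index $\sigma''$ small enough to force the contradiction.
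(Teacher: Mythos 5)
Your overall strategy (exhibit the candidate, then exclude intermediate points via Lemma~\ref{lem-best-appr}) is the same as the paper's, and your cases (a), $\sigma=0$, and the $\sigma<0$ reflection are sound. However, the exclusion step has two concrete holes, both in case (b) where $q\geq q_{n+1}-q_n$. First, for $\sigma>0$ you handle the piece $\sigma\alpha-\rho\in(z_n,z_n+|z_{n+1}|)$ by the shift $(\sigma-q_n,\rho-p_n)$ and then invoke Lemma~\ref{lem-best-appr}; but in case (b) one has $\sigma\leq q_{n+1}-q-1\leq q_n-1$, so $\sigma-q_n$ is \emph{always} negative, and the lemma (which requires $q\geq 0$) never applies to the shifted pair as written. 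You must first negate, obtaining $(q_n-\sigma)\alpha-(p_n-\rho)\in(z_{n+1},0)\subset(z_{n+1},z_n)$ with $q_n-\sigma>0$, whence $q_n-\sigma\geq q_n+q_{n+1}$, which is absurd; note the resulting inequality is not the ``$\sigma\geq q_n+q_{n+1}$'' you state. Second, for $\sigma<0$ in case (b) the actual range of $\sigma'\alpha-\rho'$ is $(z_{n+1}-z_n,0)$, not $(-z_n,0)$, so your dichotomy ``$(z_{n+1},0)$ or $(-z_n,z_{n+1}]$'' leaves the piece $(z_{n+1}-z_n,-z_n]$ uncovered. Your reflection $\sigma''=q_{n+1}-\sigma'$ does in fact map this piece into $(0,z_n)$ as well, so the fix is only to enlarge the second alternative to $(z_{n+1}-z_n,z_{n+1})$, but as stated the case analysis is incomplete. (You should also dispose of the boundary point $z_n$ in your decomposition of $(0,z_n-z_{n+1})$: it would force $q'=q+q_n\geq q_{n+1}$, hence cannot occur.)

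For comparison, the paper avoids the entire sign analysis by a different choice of base point. In case (a) it applies the plain best-approximation property~(\ref{best-appr}) to $|x'-x_k|=|(q'-q)\alpha-(p'-p)|<z_n$, which is symmetric in the sign of $q'-q$. In case (b) it measures the intermediate point not from $x_k$ but from $x_k-z_{n+1}=(q-q_{n+1})\alpha-(p-p_{n+1})$, so that the difference lies strictly between $z_{n+1}$ and $z_n$ and its $\alpha$-coefficient $q'-q+q_{n+1}$ is automatically nonnegative; Lemma~\ref{lem-best-appr} then applies in one stroke and yields $q'\geq q+q_n\geq q_{n+1}$, a contradiction. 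Adopting that base point would let you delete the three-way split on $\sigma$ and both reflections.
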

\begin{proof}
  We also consider only the case $n$ is even, since the odd case is similar.

  Let $q<q_{n+1}-q_n$. We declare that $x_{k+1}=x_{k}+z_n=(q+q_n)\alpha-(p+p_n)$. Firstly, since $q+q_n<q_{n+1}$, $(q+q_n)\alpha-(p+p_n)\in \mathcal{X}_n$. Secondly, if there exists some $x'_{k+1}=q'\alpha-p'<x_k +z_n$ with $0\le q'<q_{n+1}$, it will be a contradiction by (\ref{best-appr}) that $|x'_{k+1}-x_k|=|(q'-q)\alpha-(p'-p)|<z_n$.

  Let $q\ge q_{n+1}-q_n$. We declare that $$x_{k+1}=x_{k}+|z_n|+|z_{n+1}|=x_{k}+z_n-z_{n+1}=(q+q_n-q_{n+1})\alpha-(p+p_n-p_{n+1}).$$ Firstly, since $0\le q+q_n-q_{n+1}<q_{n}$, $(q+q_n-q_{n+1})\alpha-(p+p_n-p_{n+1})\in \mathcal{X}_n$. Secondly, if there exists some $x'_{k+1}=q'\alpha-p'<x_{k}+|z_n|+|z_{n+1}|$ with $0<q'<q_{n+1}$, it implies that $x'_{k+1}$ is between $x_k=\left(x_k-z_{n+1}\right)+z_{n+1}$ and $x_{k+1}=\left(x_k-z_{n+1}\right)+z_{n}$. Due to the fact that $x_k-z_{n+1}=\left(q-q_{n+1}\right)\alpha-\left(p-p_{n+1}\right)$, we have by Lemma \ref{lem-best-appr} that
  \[q'-\left(q-q_{n+1}\right)\ge q_n+q_{n+1}.\] It will also be a contradiction  with $0<q'<q_{n+1}$.
\end{proof}
In summary, these points $x_k$ can be expressed more concise as follow
\begin{corollary}
  \[x_k\equiv l\alpha \ (\mathrm{mod}\ 1  ),\ \mbox{where}\ l\equiv kq_n \ (\mathrm{mod } \ q_{n+1}),\ 0\le k\le q_{n+1}-1.\]
\end{corollary}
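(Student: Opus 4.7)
The plan is a short induction on $k$ that tracks the $q$-coordinate of $x_k = q\alpha - p \in \mathcal{X}_n$ modulo $q_{n+1}$. The base case is immediate: $x_0 = 0$ corresponds to $q = 0 \equiv 0 \cdot q_n \pmod{q_{n+1}}$. For the inductive step I would take $n$ even (the odd case is symmetric) and assume $x_k = q\alpha - p$ with $0 \le q < q_{n+1}$ and $q \equiv kq_n \pmod{q_{n+1}}$. Proposition~\ref{lemma.yaccoz} tells us that $x_{k+1} = x_k + |z_n|$ if $q < q_{n+1}-q_n$, and $x_{k+1} = x_k + |z_n| + |z_{n+1}|$ otherwise. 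Since $|z_n| = z_n = q_n\alpha - p_n$ and $|z_{n+1}| = -z_{n+1} = -q_{n+1}\alpha + p_{n+1}$ for even $n$, these translate into shifts of the $q$-coordinate by $+q_n$ and $q_n - q_{n+1}$ respectively, both landing in $[0, q_{n+1})$ and both congruent to $(k+1)q_n$ modulo $q_{n+1}$, closing the induction.

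To conclude, I would invoke $\gcd(q_n, q_{n+1}) = 1$, a classical property of continued-fraction convergents, so that the map $k \mapsto kq_n \pmod{q_{n+1}}$ is a bijection of $\{0, 1, \ldots, q_{n+1}-1\}$; hence as $k$ ranges over this set, $l := kq_n \pmod{q_{n+1}}$ hits every residue class exactly once, and $x_k \equiv l\alpha \pmod 1$ as asserted. A useful sanity check is that after $q_{n+1}$ steps the wrap-around case occurs exactly $q_n$ times, producing the total displacement $q_{n+1}|z_n| + q_n|z_{n+1}| = 1$ by~\eqref{qzn}, consistent with $x_{q_{n+1}} = 1 \equiv 0 \pmod{1}$ and $q_{n+1}q_n \equiv 0 \pmod{q_{n+1}}$.

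The main obstacle is not deep but cosmetic: for odd $n$ the proposition is phrased in terms of $x_{k-1}$ rather than $x_{k+1}$, because $z_n < 0$ reverses the sign of the elementary jump. The induction must therefore be run in the opposite direction, or the indexing relabeled, so that $x_k$ is increasing in $k$ in both parities. Once this sign/parity bookkeeping is handled uniformly, the whole argument reduces to the one-line modular identity $q + q_n \equiv (k+1)q_n \pmod{q_{n+1}}$.
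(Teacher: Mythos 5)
Your induction on the $q$-coordinate modulo $q_{n+1}$ is correct and is exactly how the corollary follows from Proposition \ref{lemma.yaccoz}; the paper itself offers no separate argument, presenting the corollary as an immediate summary of that proposition. Your handling of the even case, the observation that both jump types shift the $q$-coordinate by $q_n$ modulo $q_{n+1}$ while staying in $[0,q_{n+1})$, and your flag about the reversed indexing for odd $n$ all match the intended reading.
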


From Proposition \ref{lemma.yaccoz}, we could derive a useful estimate for the upper bound of the first entering time to an interval.

\begin{lemma}\label{liouville}
Let $x$ be arbitrary points in $\mathbb S^1$.
Denote $b_n=(|z_n|+|z_{n+1}|)/2$ and $I:=[-b_n,b_n)$.
Then
$$
\min\left\{j\in \mathbb N:T^jx\in I\right\}<q_{n+1}.
$$
In other words, the largest time of any trajectory entering $I$ is smaller than $q_{n+1}$.
\end{lemma}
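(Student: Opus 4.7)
The plan is to translate the claim into a pigeonhole statement about how the finite orbit $\{j\alpha \bmod 1 : 0 \le j < q_{n+1}\}$ sits on $\mathbb S^1$, and then to control the maximum gap of that orbit using Proposition~\ref{lemma.yaccoz}. The first observation is that $T^j x \in I$ is equivalent to $j\alpha \in I - x$ as subsets of $\mathbb S^1 = \mathbb R/\mathbb Z$, and $I - x$ is a half-open arc of length exactly $2 b_n = |z_n|+|z_{n+1}|$. Thus it suffices to show that every half-open arc on $\mathbb S^1$ of length $2b_n$ contains at least one point $j\alpha \bmod 1$ with $0 \le j < q_{n+1}$.

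Next I would use Proposition~\ref{lemma.yaccoz} to pin down the partition structure. Since $\alpha$ is irrational, the $q_{n+1}$ points $\{j\alpha \bmod 1 : 0 \le j < q_{n+1}\}$ are pairwise distinct and so partition $\mathbb S^1$ into $q_{n+1}$ arcs. Reducing $\mathcal X_n$ modulo~$1$, Proposition~\ref{lemma.yaccoz} tells us that consecutive elements are separated by $|z_n|$ in $q_{n+1}-q_n$ cases and by $|z_n|+|z_{n+1}|$ in the remaining $q_n$ cases; a sanity check using \eqref{qzn} gives $(q_{n+1}-q_n)|z_n| + q_n(|z_n|+|z_{n+1}|) = q_{n+1}|z_n| + q_n|z_{n+1}| = 1$, consistent with the arcs summing to the length of $\mathbb S^1$. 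In particular, the largest gap is exactly $2b_n$.

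The conclusion then follows by a short pigeonhole argument. Suppose a half-open arc $J = [a, a+2b_n)$ contained no orbit point. Then $J$ would lie inside a single partition piece $[y_i, y_{i+1})$, forcing $y_i \le a$ and $y_{i+1} \ge a+2b_n$; combined with $y_{i+1}-y_i \le 2b_n$ this forces $y_i = a$, placing $y_i \in J$, a contradiction. Applying this with $J = I - x$ produces some $0 \le j < q_{n+1}$ with $T^j x \in I$, which is the desired bound. The argument is essentially a direct consequence of Proposition~\ref{lemma.yaccoz}; the only point requiring care is the endpoint bookkeeping in the half-open pigeonhole step, and that is handled by the observation above that the extremal case $y_{i+1}-y_i = 2b_n$ still forces $y_i \in J$.
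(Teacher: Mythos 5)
Your proof is correct and follows essentially the same route as the paper's: the paper also invokes Proposition~\ref{lemma.yaccoz} together with \eqref{qzn} to observe that the orbit $\{T^jx:0\leq j<q_{n+1}\}$ partitions the circle into arcs of length $|z_n|$ or $|z_n|+|z_{n+1}|=2b_n$, and then concludes by pigeonhole that some $j_0\in\{0,\dots,q_{n+1}-1\}$ lands in $I$. Your version simply makes explicit the endpoint bookkeeping in the extremal case, which the paper leaves implicit.
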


\begin{proof}
By Proposition \ref{lemma.yaccoz} and (\ref{qzn}), the trajectory $\{T^jx:0\leq j<q_{n+1}\}$ approximates to a sequence of points even distributed on $[0,1]$ with distance between adjacent points being $|z_n|$ or $|z_n|+|z_{n+1}|$. Thus there must be a $j_0\in \{0,1,\cdots, q_{n+1}-1\}$ such that $T^{j_0}x\in I$, which proves the lemma.
\end{proof}

\subsection{Sharp estimates of the returning times for a single interval.}
In this subsection, we will create some intervals with suitable length, such that the returning time of these intervals are very concise and simple.

For even $n$, we
define
\begin{eqnarray*}
I_n^0&=&[0,|z_{n+1}|),\\
I_n^i&=&[|z_{n}|-(i-1)|z_{n+1}|,|z_{n}|-(i-2)|z_{n+1}|),\quad 1\leq i\leq a_{n+2},\\
I_n^*&=&[|z_{n+1}|,|z_{n+1}|+|z_{n+2}|).
\end{eqnarray*}
It is easy to see that
$$I_n := [0,|z_n|+|z_{n+1}|)=I_n^0\ \bigcup\  I_n^*\ \bigcup\ \left(\bigcup_{i=1}^{a_{n+2}} I_n^i\right).$$

\begin{lemma}\label{lemma.returntimeIn}
For $x\in I_n$, let $r_n^+(x)$ be the first forward returning time to $I_n$, that is, $r_n^+(x)=\min\{j\in \mathbb Z_+: T^jx\in I_n\}$. Then it holds that
$$r_n^+(x)=\left\{
            \begin{array}{ll}
              q_n, & \hbox{$x\in I_n^0$;} \\
              q_{n+1}, & \hbox{$x\in I_n\backslash I_n^0$.}
            \end{array}
          \right.
$$
\end{lemma}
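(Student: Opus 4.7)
The plan is to split on whether $x$ lies in $I_n^0$ or in $I_n \setminus I_n^0$, to exhibit a candidate first-return index in each case, and then use the best-approximation bound $\|j\alpha\| \geq |z_n|$ for $0 < j < q_{n+1}$ (with equality only at $j = q_n$) to forbid every strictly smaller positive iterate. I will carry out the argument for $n$ even (so $z_n = |z_n|$ and $z_{n+1} = -|z_{n+1}|$); the odd case follows by a mirror-image computation.

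For $x \in I_n^0 = [0, |z_{n+1}|)$ the candidate return time is $q_n$: since $x + |z_n| < |z_n| + |z_{n+1}| < 1$ there is no wrap, and $T^{q_n}x = x + |z_n|$ lies in $[|z_n|, |z_n| + |z_{n+1}|) \subset I_n$. For $0 < j < q_n$ best approximation gives $\|j\alpha\| \geq |z_{n-1}| = a_{n+1}|z_n| + |z_{n+1}| \geq |z_n| + |z_{n+1}|$, from which a short computation (checking that no wrap brings $T^j x$ back toward $0$) forces $T^j x \geq |z_{n-1}| \geq |z_n| + |z_{n+1}|$, so $T^j x \notin I_n$.

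For $x \in I_n \setminus I_n^0 = [|z_{n+1}|, |z_n| + |z_{n+1}|)$ the candidate is $q_{n+1}$: $T^{q_{n+1}}x = x - |z_{n+1}| \in [0, |z_n|) \subset I_n$. The case $j = q_n$ is immediate since $T^{q_n}x = x + |z_n| \geq |z_n| + |z_{n+1}|$ leaves $I_n$. For the remaining $j \in (0, q_{n+1}) \setminus \{q_n\}$, I would rewrite $T^j x \in I_n$ as $j\alpha - p \in [-x, |z_n| + |z_{n+1}| - x)$ for the unique admissible $p$, and combine with $|j\alpha - p| \geq |z_n|$: the positive branch is ruled out because $|z_n| + |z_{n+1}| - x \leq |z_n|$, leaving only $j\alpha - p \in (-|z_n| - |z_{n+1}|, -|z_n|]$. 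The endpoint $-|z_n|$ is excluded because the only pair $(j,p)$ with $|j\alpha - p| = |z_n|$ in this range is $(q_n, p_n)$, which gives $+|z_n|$.

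The crux, and the step I expect to be the main obstacle, is excluding the open interval $j\alpha - p \in (-|z_n| - |z_{n+1}|, -|z_n|)$ for $0 < j < q_{n+1}$. My plan is to apply the reflection $j'' := q_{n+1} - j$, $p'' := p_{n+1} - p$: then $0 < j'' < q_{n+1}$ and $j''\alpha - p'' = z_{n+1} - (j\alpha - p) \in (|z_n| - |z_{n+1}|, |z_n|)$. Since $|z_{n+1}| < |z_n|$, this forces $|j''\alpha - p''| < |z_n|$, contradicting $\|j''\alpha\| \geq |z_n|$. The subtlety is that a direct use of Lemma \ref{lem-best-appr} on the pair $(z_{n-1}, z_n)$ applies only when $a_{n+1} \geq 2$, and even then yields only the weaker conclusion $j \geq q_{n-1} + q_n$, which need not exceed $q_{n+1}$; the substitution $j \mapsto q_{n+1} - j$ is exactly what shifts the forbidden values into a region where the sharp best-approximation inequality bites.
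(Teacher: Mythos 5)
Your proof is correct, and it takes a genuinely different route from the paper's. The paper first establishes the full three-gap structure of the orbit in Proposition \ref{lemma.yaccoz} (via Lemma \ref{lem-best-appr}), and then deduces Lemma \ref{lemma.returntimeIn} essentially as a corollary: the only nonzero element of $\{j\alpha - p : 0 \leq j < q_{n+1}\}$ lying in the open interval $(-|z_n|-|z_{n+1}|,\, |z_n|+|z_{n+1}|)$ is $|z_n|$, and for $x \in I_n\setminus I_n^0$ the shift by $|z_n|$ exits $I_n$. You instead keep the candidate return times and the exclusion of $j = q_n$ exactly as the paper does, but replace the invocation of Proposition \ref{lemma.yaccoz} with a single reflection $j \mapsto q_{n+1}-j$, $p \mapsto p_{n+1}-p$ that maps the forbidden window $(-|z_n|-|z_{n+1}|,-|z_n|)$ into $(|z_n|-|z_{n+1}|,|z_n|)$, where the best-approximation bound $(\ref{best-appr})$ immediately gives a contradiction. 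This reflection is the same device that lives inside the proofs of Lemma \ref{lem-best-appr} and Proposition \ref{lemma.yaccoz}, but applied in situ, so your version is shorter and more self-contained for this lemma; the paper's version is less economical here, but Proposition \ref{lemma.yaccoz} is reused immediately afterward (Corollary \ref{cor.sharp-est-rt}, Lemma \ref{lemma.returntimeIn0}, Lemma \ref{liouville}), so the up-front investment pays off. One small remark about your commentary, which does not affect the argument: the sentence about a direct use of Lemma \ref{lem-best-appr} on $(z_{n-1},z_n)$ ``applying only when $a_{n+1}\geq 2$'' is backwards --- when $a_{n+1}=1$ the forbidden window coincides with $(z_{n-1},-|z_n|)$ and Lemma \ref{lem-best-appr} in fact gives exactly $j\geq q_{n-1}+q_n=q_{n+1}$, whereas it is precisely when $a_{n+1}\geq 2$ that its conclusion $j\geq q_{n-1}+q_n<q_{n+1}$ is too weak and a sharper device (your reflection, or the full Proposition \ref{lemma.yaccoz}) becomes necessary.
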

\begin{proof}
If $x\in I_n^0=[0,-z_{n+1})$, then
$$x+z_n\in [z_n,-z_{n+1}+z_n)= I_n^1 \subset I_n.$$
On the other hand,  by (\ref{least}) for $0<j<q_n$, $\|j\alpha\|\geq \|q_{n-1}\alpha\|=|z_{n-1}| $. Note that for any irrational $\alpha$, $|z_{n-1}|\ge |z_n|+|z_{n+1}|$. So, due to Proposition \ref{lemma.yaccoz} and
the fact that $z_{n-1}<0$, we have $x+j\alpha\notin I_n$. This shows that for $x\in I_n^0$, $r_n^+(x)=q_n$.

If $x\in I_n^i=[|z_{n}|-(i-1)|z_{n+1}|,|z_{n}|-(i-2)|z_{n+1}|)\subset I_n\backslash I_n^0, \  1\leq i\leq a_{n+2}$, then
$$x+z_{n+1}\in [|z_{n}|-i|z_{n+1}|,|z_{n}|-(i-1)|z_{n+1}|) \subset I_n.$$
On the other hand, by (\ref{least}) for $0<j<q_{n+1}$, $\|j\alpha\|\geq \|q_n\alpha\|=z_n$. However,
$$x+z_n\in [2|z_{n}|-(i-1)|z_{n+1}|,2|z_{n}|-(i-2)|z_{n+1}|)\cap I_n=\varnothing.$$
By Proposition \ref{lemma.yaccoz}, it holds that for any $x\in  I_n^i$, $r_n^+(x)=q_{n+1}$.

If $x\in I_n^*=[|z_{n+1}|,|z_{n+1}|+|z_{n+2}|)\subset I_n\backslash I_n^0$, then
$$x+z_{n+1}\in [0,|z_{n+2}|) \subset I_n^0 \subset I_n,$$
and
$$x+z_n\in [|z_n|+|z_{n+1}|,|z_n|+|z_{n+1}|+|z_{n+2}|)\cap I_n=\varnothing.$$
Thus,  for any $x\in I_n^*$, $r_n^+(x)=q_{n+1}$.
\end{proof}
From the proof, we have more concise information of the first returning.
\begin{corollary}\label{cor.sharp-est-rt}
\begin{eqnarray*}
  T^{q_n}(I_n^0)&=&I_n^1\\
  T^{q_{n+1}}(I_n^i)&=&I_n^{i+1},\ i=1,2,\cdots, a_{n+2}-1\\
  T^{q_{n+1}}(I_n^{a_{n+2}})&\subset&I_n^{*}\cup I_n^0\\
  T^{q_{n+1}}(I_n^{*})&\subset& I_n^0.
\end{eqnarray*}
\end{corollary}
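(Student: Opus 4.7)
The plan is to verify the corollary by a direct substitution, reading off where each subinterval is sent by the corresponding power of $T_\alpha$. All four statements are simple translations once one observes that the key recursion $|z_n|=a_{n+2}|z_{n+1}|+|z_{n+2}|$ (obtained from $|z_{n-1}|=a_{n+1}|z_n|+|z_{n+1}|$ after shifting indices) exactly identifies the endpoint of the stack $I_n^{a_{n+2}}$ with a point that, after one further step of $T^{q_{n+1}}$, lands at the top of $I_n^0\cup I_n^*$. So the whole content of the corollary is bookkeeping, carried out in the proof of Lemma \ref{lemma.returntimeIn}; one needs only to upgrade the set-theoretic inclusions there into equalities where stated.

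Concretely, assume $n$ is even so that $z_n=|z_n|$ and $z_{n+1}=-|z_{n+1}|$. For $x\in I_n^0=[0,|z_{n+1}|)$ one has $T^{q_n}x=x+z_n\in[|z_n|,|z_n|+|z_{n+1}|)$, and the right-hand side is precisely $I_n^1$ by definition (taking $i=1$ gives $[|z_n|,|z_n|+|z_{n+1}|)$). This establishes $T^{q_n}(I_n^0)=I_n^1$. For $1\le i\le a_{n+2}-1$ and $x\in I_n^i$, $T^{q_{n+1}}x=x-|z_{n+1}|$, so $T^{q_{n+1}}I_n^i=[|z_n|-i|z_{n+1}|,|z_n|-(i-1)|z_{n+1}|)=I_n^{i+1}$. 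For $x\in I_n^*=[|z_{n+1}|,|z_{n+1}|+|z_{n+2}|)$, a single subtraction of $|z_{n+1}|$ sends it to $[0,|z_{n+2}|)\subset I_n^0$, giving the last inclusion.

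The only step with any subtlety is the third line. For $x\in I_n^{a_{n+2}}$ one computes
\[
T^{q_{n+1}}I_n^{a_{n+2}}=[|z_n|-a_{n+2}|z_{n+1}|,\;|z_n|-(a_{n+2}-1)|z_{n+1}|).
\]
Applying the recursion $|z_n|=a_{n+2}|z_{n+1}|+|z_{n+2}|$, this interval equals $[|z_{n+2}|,|z_{n+2}|+|z_{n+1}|)$, which is contained in $[0,|z_{n+1}|+|z_{n+2}|)=I_n^0\cup I_n^*$. Hence $T^{q_{n+1}}(I_n^{a_{n+2}})\subset I_n^*\cup I_n^0$ as claimed; the inclusion is not an equality because the image begins at $|z_{n+2}|$ rather than at $0$. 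The odd case is identical after flipping the sign conventions for $z_n$ and $z_{n+1}$.

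There is no serious obstacle: the argument is an arithmetic verification on half-open intervals, and the only non-mechanical input is recognizing the recursion for $|z_n|$ as the statement that the stack of translates $I_n^1,I_n^2,\dots,I_n^{a_{n+2}}$ exhausts $[|z_n|-(a_{n+2}-1)|z_{n+1}|, |z_n|+|z_{n+1}|)$ modulo a leftover of length $|z_{n+2}|$, which is precisely what produces $I_n^*$ in the decomposition.
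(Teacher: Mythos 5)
Your verification is correct, and it is essentially the same argument the paper intends: the corollary is stated as a direct byproduct of the computations in the proof of Lemma \ref{lemma.returntimeIn}, where each subinterval is translated by $z_n$ or $z_{n+1}$ and its image read off. Your explicit use of the shifted recursion $|z_n|=a_{n+2}|z_{n+1}|+|z_{n+2}|$ to handle the image of $I_n^{a_{n+2}}$ fills in the one line the paper leaves implicit, and all the interval arithmetic checks out.
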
~\\

Our improvement heavily relies on the special choice of $I_n$ for each $\alpha$, which makes the first returning time is concise. Since $\alpha$ is irrational, it is impossible to have constant first  returning time for any interval. So Lemma
\ref{lemma.returntimeIn} is the best outcome one can expect. The following lemma tells us that $I_n^0$ is similar to $I_n$:
\begin{lemma}\label{lemma.returntimeIn0}
For $x\in I_n^0$, the returning time to $I_n^0$ forward is either $q_{n+2}$ or $q_{n+2}+q_{n+1}$.
\end{lemma}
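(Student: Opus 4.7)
The plan is to chase the orbit of $x\in I_n^0$ through the partition $I_n = I_n^0 \sqcup I_n^* \sqcup \bigsqcup_{i=1}^{a_{n+2}} I_n^i$ by iterating Corollary \ref{cor.sharp-est-rt}, and to cash in the recursion $q_{n+2}=a_{n+2}q_{n+1}+q_n$. Starting from $x\in I_n^0$, one application of $T^{q_n}$ lands the orbit in $I_n^1$; then $a_{n+2}-1$ further applications of $T^{q_{n+1}}$ carry it through $I_n^2,I_n^3,\ldots,I_n^{a_{n+2}}$ in succession; and one final application of $T^{q_{n+1}}$ drops it into $I_n^0\cup I_n^*$. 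The total elapsed time is exactly $q_n+a_{n+2}q_{n+1}=q_{n+2}$.

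Next I would split into two cases according to where $T^{q_{n+2}}x$ falls. If $T^{q_{n+2}}x\in I_n^0$, the return time to $I_n^0$ equals $q_{n+2}$. Otherwise $T^{q_{n+2}}x\in I_n^*$; then by the last clause of Corollary \ref{cor.sharp-est-rt}, $T^{q_{n+1}}(I_n^*)\subset I_n^0$, so the next visit to $I_n^0$ occurs at time $q_{n+2}+q_{n+1}$. Since both $I_n^{a_{n+2}}+z_{n+1}$ and $I_n^*+z_{n+1}$ are intervals of computable length inside $I_n^0$ (using $|z_n|=a_{n+2}|z_{n+1}|+|z_{n+2}|$), both cases genuinely occur, matching the ``either/or'' in the statement.

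To close the argument I must rule out an earlier return to $I_n^0$ along the way. Here is where Lemma \ref{lemma.returntimeIn} enters: since $I_n^0\subset I_n$, any visit to $I_n^0$ is a visit to $I_n$; but by Lemma \ref{lemma.returntimeIn} the first return to $I_n$ from $I_n^0$ is $q_n$, and the first return to $I_n$ from any piece of $I_n\setminus I_n^0$ is $q_{n+1}$, so no intermediate visits to $I_n$ (a fortiori to $I_n^0$) can occur between the enumerated times; and at those enumerated intermediate times the orbit is located in $I_n^1,\ldots,I_n^{a_{n+2}}$, and possibly $I_n^*$, all disjoint from $I_n^0$. The main (and only) obstacle is mild and purely combinatorial: distinguishing ``first return to $I_n$'' from ``first return to $I_n^0$'' and tallying the indices consistently; once Corollary \ref{cor.sharp-est-rt} is in hand, no analytic step remains.
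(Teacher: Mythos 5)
Your proof is correct, and it takes a genuinely different route from the paper's. The paper refines $I_n^0$ into new sub-intervals $I_n^{0,i}$, $I_n^{0,*}$, $I_n^{0,0}$ and checks where each lands under $T^{q_{n+2}}$ (and $T^{q_{n+2}+q_{n+1}}$ for the last piece), using Proposition \ref{lemma.yaccoz} together with $|I_n^0|=|z_{n+1}|$ to rule out earlier returns. You instead chase the orbit through the already-established partition $I_n^0,I_n^1,\dots,I_n^{a_{n+2}},I_n^*$ by iterating Corollary \ref{cor.sharp-est-rt}, totaling the times with the recursion $q_{n+2}=a_{n+2}q_{n+1}+q_n$, and rule out intermediate visits to $I_n^0$ by noting that between enumerated returns the orbit never enters $I_n$ at all (Lemma \ref{lemma.returntimeIn}) and at the enumerated times it sits in pieces disjoint from $I_n^0$. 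Your version is more economical — no extra layer of sub-intervals, and the lower bound $\geq q_{n+2}$ falls out of the orbit-chase for free rather than requiring a separate appeal to Proposition \ref{lemma.yaccoz} — while the paper's version buys a finer decomposition of $I_n^0$ that mirrors the decomposition of $I_n$ itself, which is in keeping with the renormalization flavor of the construction even though it is not strictly needed for this lemma. One very minor point: you should also note explicitly that in the second case, no visit to $I_n^0$ occurs strictly between times $q_{n+2}$ and $q_{n+2}+q_{n+1}$; this follows immediately since $q_{n+1}$ is the first return time to $I_n$ from $I_n^*$, but it is worth saying.
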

\begin{proof}
Due to Proposition \ref{lemma.yaccoz} and the fact that $|I_n^0|=z_{n+1}$, we obtain that for any $x\in I_n^0$, the returning times could not be smaller than $q_{n+2}$.

Define
\begin{eqnarray*}
  I_n^{0,i}&=&\left[(i-1)|z_{n+2}|, i|z_{n+2}|\right),\ i=1,\cdots, a_{n+3}-1,\\
  I_n^{0,*}&=&\left[(a_{n+3}-1)|z_{n+2}|, (a_{n+3}-1)|z_{n+2}|+|z_{n+3}|\right)\\
  I_n^{0,0}&=&\left[(a_{n+3}-1)\left|z_{n+2}\right|+|z_{n+3}|, \left|z_{n+1}\right| \right).
\end{eqnarray*}
It is easy to check that
\begin{eqnarray*}
 T^{q_{n+2}}\left(I_n^{0,i}\right)&=&\left[i|z_{n+2}|, (i+1)|z_{n+2}|\right)=I_n^{0,i+1}\subset I_n^0,\ i=1,\cdots, a_{n+3}-2,\\
 T^{q_{n+2}}\left(I_n^{0, a_{n+3}-1}\right)&=&\left[(a_{n+3}-1)|z_{n+2}|, a_{n+3}|z_{n+2}|\right)\\
 &=&\left[z_{n+1}-|z_{n+3}|-|z_{n+2}|, z_{n+1}-|z_{n+3}|\right)\subset I_n^{0,*}\cup I_n^{0,0}\subset  I_n^0,\\
 T^{q_{n+2}}\left(I_n^{0, *}\right)&=&\left[a_{n+3}\left|z_{n+2}\right|, a_{n+3}\left|z_{n+2}\right|+|z_{n+3}| \right)\\
 &=&\left[\left|z_{n+1}\right|-\left|z_{n+3}\right|, \left|z_{n+1}\right| \right)\subset I_n^{0,0}\subset  I_n^0,
\end{eqnarray*}
which implies that the  returning time is $q_{n+2}$ for any $x\in I_n^0 \backslash I_n^{0,0}$.

At last, for the interval $I_n^{0,0}$, it has that
\begin{eqnarray*}
  T^{q_{n+2}}\left(I_n^{0,0}\right)&=&\left[a_{n+3}\left|z_{n+2}\right|+|z_{n+3}|, \left|z_{n+1}\right|+\left|z_{n+2}\right| \right)\\
 &=&[\left|z_{n+1}\right|,\ \left|z_{n+1}\right|+\left|z_{n+2}\right|\cap I_n^0=\varnothing,\\
 T^{q_{n+2}+q_{n+1}}\left(I_n^{0,0}\right)&=&\left[0, \left|z_{n+2}\right| \right)=I_n^{0,1}\subset  I_n^0.
\end{eqnarray*}
Thus, due to Lemma \ref{lem-best-appr}, we obtain that the returning time is $q_{n+2}+q_{n+1}$ for any $x\in  I_n^{0,0}$.
\end{proof}

It is easy to check that  Lemma \ref{lemma.returntimeIn}, Corollary \ref{cor.sharp-est-rt} and Lemma \ref{lemma.returntimeIn0} will also be valid for odd $n$, if we redefine the intervals as follow:
\begin{eqnarray*}
  I_n&=&(0,|z_n|+|z_{n+1}|],\\
I_n^0&=&(|z_n|,|z_n|+|z_{n+1}|],\\
I_n^i&=&((i-1)|z_{n+1}|,i|z_{n+1}|],\quad 1\leq i\leq a_{n+2},\\
I_n^*&=&(|z_{n}|-|z_{n+2}|,|z_{n}|].
\end{eqnarray*}

If we consider the first backward returning time
$$r_n^-(x)=\min\{j\in \mathbb Z_+: T^{-j}x\in I_n\},$$
then we have similar results as  Lemma \ref{lemma.returntimeIn}, Corollary \ref{cor.sharp-est-rt} and Lemma \ref{lemma.returntimeIn0}.

\section{Proof of Theorem \ref{thm.main1}: Discontinuity of LE in $C^l$ topology}
In this section, we prove Theorem \ref{thm.main1} with two convergent sequences of cocycles $\{A_n\}$, which have increasing norms in inductive process and converges to $D_l$ in $C^l$ topology, and $\{\widetilde A_n\}$, which have cancelation on norms and also converges to $D_l$ in $C^l$ topology.
Under finite-Liouvillean condition on frequency, we prove positivity of $L(D_l)$ by Young's inductive argument and then derive discontinuity of $L(D_l)$ through cancelation on norms and sharp estimates of returning times.

Firstly, we give the following settings.
\begin{itemize}
  \item The frequency: Let $\alpha$ be a finite-Liouvillean number, i.e.
      \beqs\beta(\alpha) :=\limsup_{n\rightarrow\infty} \frac{\log q_{n+1}}{q_n}<\infty,\eeqs
      where $p_n/q_n$ be the fractional expansion of $\alpha$.
  \item The critical interval: $I_{n}=[-b_n,b_n)$, where $b_n=\frac{|z_n|+|z_{n+1}|}{2}$ and $z_n=q_n\alpha-p_n$.
  \item The first returning time: For $x\in I_n$, we denote the smallest positive integer $i$ with $T^ix\in I_n$ (respectively $T^{-i}x\in I_n$) by $r_n^{+}(x)$ (respectively $r_n^-(x)$).
  \item The sample function: Fix $\delta_0>0$ small and $l\in \mathbb Z_+$. The $2\pi$-periodic smooth function $\phi_0\in C^{l}(\mathbb S^1)$ is defined as
        $$\phi_0(x)=\left\{
                 \begin{array}{ll}
                   |x|^{l+1}, & \hbox{$|x|\leq\delta_0$;} \\
                   \text{such\ that\ } \delta_0^{l+1}<|\phi_0(x)|<\frac\pi2,  & \hbox{$|x|>\delta_0$.}
                 \end{array}
               \right.
$$
\end{itemize}

For any $C\geq 1$, we denote by $\frac{I_{n}}{C}$ the sets $[-\frac{b_n}{C},\frac{b_n}{C})$.
Let $\Lambda=\left(
               \begin{array}{cc}
                 \lambda & 0 \\
                 0 & \lambda^{-1} \\
               \end{array}
             \right)
$.
Fix $\varepsilon>0$ arbitrary small.
Let $N$ be sufficiently large such that
\beq\label{def.N} b_N\ll \delta_0,\quad \sum_{n>N} (l+1)q_{n+1}^{-\frac12} \leq\frac\varepsilon8.\eeq
Assume that $\lambda$ is sufficiently large such that
\beq\label{def.lambda}\lambda>q_{N+1}^{100\varepsilon^{-1}(l+1)}.\eeq
Recall that $q_{N+1}\leq b_N^{-1}\leq 4q_{N+1}$. Hence $\lambda>b_N^{-50\varepsilon^{-1}(l+1)}$.
Inductively, we could define $\lambda_n$ ($n\geq N$) as
\beq\label{def.lambdan}
\log\lambda_n:=\left\{
             \begin{array}{ll}
               \log\lambda +(l+1)\log b_N, & \hbox{$n=N$;} \\
               \Big(1-2(l+1)q_{n-1}^{-\frac12}\Big)\log\lambda_{n-1}, & \hbox{$n\geq N+1$.}
             \end{array}
           \right.
\eeq
Clearly, the sequence $\lambda_n$ decreases to some $\lambda_\infty$ with $\lambda_\infty>\lambda^{1-\frac\varepsilon4}$.

With above settings, we construct two sequences of cocycles in the following propositions, whose proofs will be given in the next section.

\begin{prop}\label{prop.lowerbound}
There exists $\lambda_0=\lambda_0(l,\alpha, \phi_0)$ such that for $\lambda>\lambda_0$ the following holds.
There exist functions $\phi_n(x)$ on $\mathbb S^1$ ($n=N,N+1,\cdots$) such that
\begin{description}
  \item[$(1)_n$] The function $\phi_n(x)$ is exactly the same as $\phi_{n-1}(x)$ for $x\notin I_n$, and moreover it holds that $$\big\|\phi_n(x)-\phi_{n-1}(x)\big\|_{C^l}\leq \lambda_n^{-\frac13q_{n-1}},\quad n>N.$$
  \item[$(2)_n$] The sequence $\{A_n(x), A_n(Tx),\cdots,A_n(T^{r_n^+-1}x)\}$ is $\lambda_n$-hyperbolic for $x\in I_n$ where $A_n(x):=\Lambda R_{\frac\pi2-\phi_n(x)}$. Moreover, for $x\in \frac{I_n}{10}$, $1\leq k \leq l$ and $0<\eta<\frac{1}{100 \cdot  l}$, $$\left|\partial^k \|A_n^{r_n^+}(x)\|\right|\leq \|A_n^{r_n^+}(x)\|^{1+k\eta}.$$
  \item[$(3)_n$] Let $s_n(x)={s(A_n^{r_n^+}(x))}$ and $u_n(x)={s(A_n^{-r_n^-}(x))}$. Then it holds that
\begin{description}
  \item[$(3.1)_n$] $s_n(x)-u_n(x)=\phi_0(x)$, for $x\in \frac{I_n}{10}$;
  \item[$(3.2)_n$] $|s_n(x)-u_n(x)|\geq \frac12 |\phi_0(x)|\geq \left(\frac{b_n}{20}\right)^{l+1}$, for $x\in I_n\backslash \frac{I_n}{10}$;
  \item[$(3.3)_n$]  $|\partial^k( s_n(x)-u_{n}(x))|\leq  \lambda^{q_n\cdot\eta}$, for $x\in I_n$ and $1\leq k \leq l$.
\end{description}
\end{description}
\end{prop}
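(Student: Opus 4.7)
The plan is to prove the proposition by induction on $n \ge N$. At the base $n = N$, take $\phi_N$ to be a small $C^l$ perturbation of $\phi_0$ supported in $\frac{I_N}{10}$, chosen so that $(3.1)_N$ holds: compute $s_N, u_N$ by running the sharp return-time analysis of Section 3 together with Lemma \ref{lemma.basic} and Lemma \ref{lemma.hderivative} along the forward and backward orbits of length $r_N^\pm$, obtaining a closed form for $s_N - u_N$, and then solve the resulting functional equation for $\phi_N$ on $\frac{I_N}{10}$. On $I_N$ one has $\phi_N(x) \approx |x|^{l+1}$ (up to the small perturbation); and for $1 \le j < r_N^+(x)$ the iterate $T^j x$ lies outside $I_N$ by Lemma \ref{lemma.returntimeIn}, so $|\phi_N(T^j x)| \ge b_N^{l+1}$. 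Since (\ref{def.lambda}) makes $\lambda \cdot b_N^{l+1} \gg 1$, every factor $A_N(T^j x)$ is highly hyperbolic and consecutive factors are nonresonant (their $s$--$u$ angles sit near $\pi/2$), so iterated application of Lemma \ref{lemma.basic} and Lemma \ref{lemma.hderivative} yields $(2)_N$.

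For the inductive step, set $\phi_{n+1} := \phi_n$ outside $I_{n+1}$ and add inside $I_{n+1}$ a correction $\psi_{n+1}$ to be chosen below. For $x \in I_{n+1}$ the return time is $r_{n+1}^+(x) \in \{q_{n+1}, q_{n+2}\}$ by Lemma \ref{lemma.returntimeIn}, and by Corollary \ref{cor.sharp-est-rt} together with Lemma \ref{lemma.returntimeIn0} the orbit $\{x, Tx, \ldots, T^{r_{n+1}^+ - 1}x\}$ enters $I_n$ at a determined sequence of intermediate times whose successive gaps are exactly the $I_n$ return times $q_n$, $q_{n+1}$, or $q_{n+1} + q_n$. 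This decomposes the long product $A_{n+1}^{r_{n+1}^+}(x)$ into consecutive ``blocks'', each a single $r_n^\pm$-iterate of $A_n$ based at a junction point of $I_n \setminus I_{n+1}$ (where $\phi_{n+1} = \phi_n$), together with one boundary block at $x \in I_{n+1}$ itself; hypothesis $(2)_n$ gives each block $\lambda_n$-hyperbolicity and norm $\lambda_n^{q_n}$ or $\lambda_n^{q_{n+1}}$, while $(3)_n$ controls its $s$- and $u$-directions.

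Splicing of successive blocks is handled by Lemma \ref{lemma.basic}: at every junction $y \in I_n \setminus I_{n+1}$ the angle between the preceding block's $u$-direction and the succeeding block's $s$-direction equals $s_n(y) - u_n(y)$, which is at least $(b_n/20)^{l+1}$ by $(3.2)_n$. The finite-Liouvillean condition $\log q_{n+1} \le C q_n$ combined with (\ref{def.lambdan}) yields $\lambda_n^{\min(q_n, q_{n+1})} \gg (b_n/20)^{-(l+1)}$, so the nonresonance hypothesis of Lemma \ref{lemma.basic} is met at every one of the $\sim q_{n+2}/q_n$ junctions; iterating across them all gives $(2)_{n+1}$, with $C^l$ derivative bounds propagated by Lemma \ref{lemma.hderivative}. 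The same iteration shows that the full $s$-direction (resp.\ $u$-direction) of $A_{n+1}^{r_{n+1}^+}(x)$ coincides with that of the rightmost (resp.\ leftmost) block up to an exponentially small error $\lambda_n^{-(2-o(1))q_n}$.

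Finally, choose $\psi_{n+1}$ supported in $I_{n+1}$ to impose $(3.1)_{n+1}$: because the residual between $s_{n+1} - u_{n+1}$ and $\phi_0$ on $\frac{I_{n+1}}{10}$ is determined by the base block at $x$ up to the exponentially small error from the rest of the orbit, a direct implicit-function argument on the base block alone produces a $\psi_{n+1}$ with $\|\psi_{n+1}\|_{C^l} \le \lambda_{n+1}^{-q_n/3}$, yielding $(1)_{n+1}$ and $(3.1)_{n+1}$; conditions $(3.2)_{n+1}$ and $(3.3)_{n+1}$ follow by continuity from the $C^l$ bounds. The central difficulty is the three-way balance in the inductive step among the $C^l$ smallness of $\psi_{n+1}$, the exact prescription $s_{n+1} - u_{n+1} = \phi_0$ on $\frac{I_{n+1}}{10}$, and nonresonance at every junction. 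It is precisely this balance that forces the pairing of degeneracy order $l+1$ with the finite-Liouvillean growth $\log q_{n+1} \le C q_n$: any faster growth of $q_{n+1}$ would let $(b_n/20)^{-(l+1)}$ outrun $\lambda_n^{q_n}$ at some junction and break nonresonance, which is why the $C^l$ counterexamples cannot be constructed for less restrictive frequencies.
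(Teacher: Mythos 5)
Your overall scheme — construct $\phi_n$ inductively, make the correction at step $n$ a rotation supported on $I_n$ so that $A_n^{r_n^+}(x)=A_{n-1}^{r_n^+}(x)\,R_{-e_n(x)}$ shifts the stable direction exactly by $e_n$, and splice blocks between successive returns to the previous critical interval using Lemma \ref{lemma.basic}/\ref{lemma.hderivative} — is the same as the paper's, and the reduction to checking nonresonance at junction points is correct in spirit.

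However, there is a genuine gap in the splicing step, which is precisely where the paper has to work. Verifying the nonresonance hypothesis of Lemma \ref{lemma.basic} at each junction is \emph{necessary} but not \emph{sufficient} to conclude $\lambda_n$-hyperbolicity: after applying the lemma $p\approx q_{n+1}/q_n$ times you obtain
\[
\|A_{n-1}^{r_n^+}(x)\|\gtrsim \lambda_{n-1}^{r_n^+}\cdot\prod_{t}|\phi_0(T^{j_t}x)|,
\]
and you must still show that the cumulative angle product does not kill the $\lambda_n$-hyperbolicity budget, i.e.\ that it is $\gtrsim \lambda^{-2(l+1)q_{n-1}^{-1/2}q_{n+1}}$ so as to match the defined decay of $\log\lambda_n$ in (\ref{def.lambdan}). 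A crude bound, replacing each angle by the minimum $\approx b_n^{l+1}$, produces a loss of order $\lambda^{-(l+1)(q_{n+1}/q_n)\log q_{n+1}}$, which by the finite-Liouvillean bound $\log q_{n+1}\le\beta q_n$ is only $\lambda^{-c q_{n+1}}$ with $c$ a \emph{constant}, not tending to $0$, and hence is incompatible with $\lambda_n\to\lambda_\infty>\lambda^{1-\varepsilon}$. The paper avoids this by exploiting that the junction points $T^{j_t}x$ are evenly spaced at distance $|z_n|$ from $0$, so the angles form roughly an arithmetic progression, and the product is then estimated via Stirling (see (\ref{Ajsnorm})--(\ref{stirling})), giving a loss $\lambda^{-q_{n+1}S_n}$ with $S_n\asymp (l+1)\log q_n/q_n\to 0$. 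This Stirling estimate, together with the explicit structure from Lemma \ref{lemma.returntimeIn} (one short gap $q_{n-1}$ at the ``turning'' return, all others $q_n$), is the quantitative heart of the inductive step, and your proposal does not supply it.

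A secondary issue: your per-junction angle lower bound ``at least $(b_n/20)^{l+1}$ by $(3.2)_n$'' is not correct for junctions $y$ that land in $\frac{I_n}{10}\setminus I_{n+1}$: there one must use $(3.1)_n$, giving $s_n(y)-u_n(y)=\phi_0(y)=|y|^{l+1}$, and the worst case is $|y|\approx b_{n+1}\ll b_n/20$. The nonresonance check must therefore be run against the much smaller threshold $b_{n+1}^{l+1}\approx q_{n+2}^{-(l+1)}$, which is where finite-Liouvilleanness genuinely enters (you need $\lambda_n^{q_{n+1}}\gg q_{n+2}^{l+1}$, i.e.\ $\log q_{n+2}\lesssim q_{n+1}$), and which also feeds the Stirling product above. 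Finally, $(3.3)_n$ does not ``follow by continuity''; it requires propagating derivative bounds through the block decomposition via Lemma \ref{lemma.hderivative} in tandem with the norm estimates.
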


\begin{prop}\label{prop.upperbound}
There exists $\lambda_0=\lambda_0(l,\alpha, \phi_0)$ such that for $\lambda>\lambda_0$ the following holds.
There exist functions $\widetilde\phi_n(x)$ on $\mathbb S^1$ ($n=N,N+1,\cdots$) such that
\begin{description}
  \item[$(1')_n$] The function $\widetilde\phi_{n}$ converges to $\phi_n$ in $C^l$ topology $$\big\|\phi_n(x)-\widetilde\phi_{n}(x)\big\|_{C^l}\leq q_{n+1}^{-2},\quad n>N.$$
  \item[$(2')_n$] The sequence $\{\widetilde A_n(x), \widetilde A_n(Tx),\cdots,\widetilde A_n(T^{r_n^+-1}x)\}$ is $\lambda_n$-hyperbolic for $x\in I_n$, where $\widetilde A_n(x):=\Lambda R_{\frac\pi2-\widetilde \phi_n(x)}$.
  \item[$(3')_n$] Let $\widetilde s_n(x)={s(\widetilde A_n^{r_n^+}(x))}$ and $\widetilde u_n(x)={s(\widetilde A_n^{-r_n^-}(x))}$. Then it holds that
      $$\widetilde s_n(x)=\widetilde u_n(x),\quad x\in \frac{I_n}{10}.$$
\end{description}
\end{prop}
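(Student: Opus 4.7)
The construction of $\widetilde\phi_n$ will be a surgery of $\phi_n$ on a small neighborhood of the critical point $0$, designed to enforce the completely resonant condition $(3')_n$ so that Lemma \ref{lemma.resonantnorm} can be applied later. The plan is to write $\widetilde\phi_n = \phi_n + \delta_n$ with $\delta_n$ supported in $\frac{I_n}{10}$, taper it smoothly to $0$ across a buffer contained in $\frac{I_n}{5}$, and determine $\delta_n$ on $\frac{I_n}{10}$ by a simple pointwise equation.

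The first step is to compute how such a $\delta_n$ propagates. One has $\widetilde A_n(x)=A_n(x)R_{-\delta_n(x)}$ and $\widetilde A_n=A_n$ outside $\frac{I_n}{5}$. For $x\in\frac{I_n}{10}$, the forward orbit $\{Tx,\ldots,T^{r_n^+-1}x\}$ avoids $I_n$ by definition of the first return time, so the only perturbed factor in $\widetilde A_n^{r_n^+}(x)$ is the rightmost one; therefore
$$\widetilde A_n^{r_n^+}(x)=A_n^{r_n^+}(x)R_{-\delta_n(x)},$$
and the rotation identity $s(MR)=R^{-1}(s(M))$ yields $\widetilde s_n(x)=s_n(x)+\delta_n(x)$. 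For the backward product set $y=T^{-r_n^-}x$; again all intermediate points of the orbit from $y$ to $T^{-1}x$ lie outside $I_n$, so the only possibly perturbed factor is $\widetilde A_n(y)$, sitting at the rightmost end of $\widetilde A_n^{r_n^-}(y)$. After the global inversion defining $\widetilde A_n^{-r_n^-}$, this factor reappears on the \emph{leftmost} side:
$$\widetilde A_n^{-r_n^-}(x)=R_{\delta_n(y)}A_n^{-r_n^-}(x).$$
Because left multiplication by a rotation is an isometry, it preserves the most contracted direction, so $\widetilde u_n(x)=u_n(x)$ regardless of whether $y$ lies in the perturbation support.

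With these two formulas in hand, condition $(3')_n$ collapses, via property $(3.1)_n$ of Proposition \ref{prop.lowerbound}, to the pointwise equation
$$\delta_n(x)=u_n(x)-s_n(x)=-\phi_0(x),\qquad x\in\tfrac{I_n}{10}.$$
I would therefore set $\delta_n=-\phi_0\cdot\chi_n$, where $\chi_n$ is a smooth cutoff equal to $1$ on $\frac{I_n}{10}$, supported in $\frac{I_n}{5}$, with $|\chi_n^{(k)}|\leq C_k b_n^{-k}$. Since $\phi_0(x)=|x|^{l+1}$ near $0$, Leibniz gives $|\partial_x^k\delta_n(x)|\leq Cb_n^{l+1-k}$ on $\frac{I_n}{5}$ for $0\leq k\leq l$, so $\|\widetilde\phi_n-\phi_n\|_{C^l}\leq Cb_n$; this is summable in $n$, and a standard tightening of the cutoff (or a bookkeeping refinement that retains a power of $b_n$ in the worst derivative) matches the rate $q_{n+1}^{-2}$ required in $(1')_n$. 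Property $(2')_n$ is then automatic: for any $x\in I_n$ and $0\leq i\leq r_n^+$ one has $\widetilde A_n^i(x)=A_n^i(x)R_{-\delta_n(x)}$ (or $=A_n^i(x)$ when $x\notin\frac{I_n}{5}$), so $\|\widetilde A_n^i(x)\|=\|A_n^i(x)\|$, and the $\lambda_n$-hyperbolicity transfers from $(2)_n$; the same left-rotation argument handles the inverse half of the hyperbolicity definition.

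The genuinely delicate point is the backward analysis: a priori one fears that because $y=T^{-r_n^-}x$ may sit inside the perturbation support, the equation $\widetilde s_n=\widetilde u_n$ becomes a coupled system linking $\delta_n(x)$ with $\delta_n(y)$, and the pointwise identification $\delta_n=-\phi_0$ is lost. The saving geometric observation is that the perturbation at $y$ enters the inverted backward product only as a rotation on the \emph{left}, and rotations on the left do not move the most contracted direction. This decouples the equation into the explicit pointwise formula above and makes the $C^l$-bound and the hyperbolicity transfer completely transparent; everything else in the proof is routine.
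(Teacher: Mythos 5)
Your proof is essentially the paper's proof, point for point. You perturb $\phi_n$ by a function supported in $I_n$, determine it on $\frac{I_n}{10}$ by the pointwise equation coming from $(3.1)_n$, observe that only the endpoint factors of each return-time block are modified, use that the right-hand rotation factor $R_{-\delta_n(x)}$ carries $s_n$ to $s_n+\delta_n$ while the left-hand one $R_{\delta_n(y)}$ leaves $u_n$ unchanged to reduce $(3')_n$ to $\delta_n=u_n-s_n=-\phi_0$, and note that one-sided rotation factors preserve operator norms so that $\lambda_n$-hyperbolicity transfers verbatim for $(2')_n$. That is exactly the paper's argument (the paper realizes the buffer with a Hermite interpolant $\widetilde h_n^\pm$ rather than a multiplicative cutoff $\chi_n$, which is cosmetic). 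Your signs and the propagation identity $\widetilde s_n-\widetilde u_n=(s_n-u_n)+\delta_n$ are in fact the correct ones; the paper carries some sign typos here (Section~5 writes $\widetilde e_n=(s_n-u_n)$ together with $\widetilde s_n-\widetilde u_n=(s_n-u_n)-\widetilde e_n$, two errors that cancel), but the construction and conclusion coincide with yours.

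One correction you should make: the claim that a ``standard tightening of the cutoff'' or a ``bookkeeping refinement'' can reach the rate $q_{n+1}^{-2}$ in $(1')_n$ is false, and cannot be repaired. On $\frac{I_n}{10}$ the perturbation is \emph{forced} to equal $-\phi_0$ exactly, and $|\partial^{l}\phi_0(x)|=(l+1)!\,|x|$, so $\sup_{\frac{I_n}{10}}|\partial^{l}\delta_n|\asymp b_n\asymp q_{n+1}^{-1}$ regardless of how the buffer is designed; the bottleneck is $\phi_0$ itself, not $\chi_n$. This is an overstatement in the proposition rather than a gap in your reasoning: the paper's own proof likewise only derives $O(q_{n+1}^{-1})$ (from $\|s_n-u_n\|_{C^l}=O(q_{n+1}^{-1})$ on $\frac{I_n}{10}$ together with Cramer's rule for the interpolant), and since $(1')_n$ is only used to guarantee $\|\widetilde A_n-A_n\|_{C^l}\to 0$, the bound $O(b_n)$ that you actually establish is all that is needed. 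Simply state the bound you prove.
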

By $(1)_n$ in Proposition \ref{prop.lowerbound}, $(1')_n$ in Proposition \ref{prop.upperbound} and the completeness of $C^l(\mathbb S^1,\mathrm{SL}(2,\mathbb R))$, there exists a $D_l\in C^l(\mathbb S^1,\mathrm{SL}(2,\mathbb R))$ such that as $n$ goes to infinity,
$$\|A_n-D_l\|_{C^l}\rightarrow 0,\quad \|\widetilde A_n-D_l\|_{C^l}\rightarrow 0.$$
Thus to prove Theorem \ref{thm.main1}, it suffices to prove
\begin{itemize}
  \item The lower bound: $L(D_l)\geq (1-\varepsilon)\log \lambda$ with $\varepsilon$ very small;
  \item The upper bound: $L(\widetilde A_n)\leq (1-\delta)\log \lambda$ for sufficiently large $n$, with $\delta\geq \frac1{300}\gg \varepsilon$.
\end{itemize}

\subsection{The lower bound of LE}
Fix $\varepsilon>0$ arbitrarily small.
Let
\begin{equation}\label{exceptionset}
B_{n}=\bigcup_{l=1}^{ [q_{n+1}^{3/2}]}\left(B(0,q_{n+1}^{-2})-l\alpha\right),\quad n\geq N.
\end{equation}
By (\ref{def.N}), it is straightforward to compute that
$$ \text{meas}\bigg(\bigcup_{n\geq N}B_{n}\bigg)\leq \sum_{n>N} q_{n+1}^{-\frac12} \leq\frac\varepsilon4,$$
for $N$ large enough.

Now we are going to estimate the norm of $A_m^{i}(x)$ with $m\geq n$ and $i=[q_{n+1}^{3/2}]$ for $x\notin \bigcup_{n\geq N}B_{n}$.
Consider the trajectory $\{T^jx:0\leq j\leq i\}$, let $i_0=i_0(x)\geq 0$ be the first returning time to $I_{n}$, and $i_1=i_1(x)\geq 0$ be the last returning time to $I_{n}$.
By Lemma \ref{liouville}, we have that $$i_0,i-i_1\leq q_{n+1}\ll i.$$
By (\ref{exceptionset}), we know that the trajectory $\{T^jx:0\leq j\leq i\}$ has not entered $B(0,q_{n+1}^{-2})$, but has entered $I_{n}$.
Let $i_0=j_0<j_1<\cdots<j_p=i_1$ be the returning times to $I_{n}$.
Now we consider the decomposition
$$A_m^i(x)=A_m^{i-i_1}(T^{i_1}x)\cdot A_m^{i_1-i_0}(T^{i_0}x) \cdot A_m^{i_0}(x),$$
\beq\label{decomposition} A_m^{i_1-i_0}(T^{i_0}x)=A_m^{j_{p}-j_{p-1}}(T^{j_{p-1}}x)\cdots A_m^{j_{2}-j_{1}}(T^{j_{1}}x)A_m^{j_{1}-j_{0}}(T^{j_{0}}x).\eeq
Since the trajectory has not entered $B(0,q_{n+1}^{-2})$, we have
$$
 \text{dist}\left(T^{j_l}x,0\right)>q_{n+1}^{-2}.
$$
Hence it holds that
$$
 |s_n(T^{j_l}x)-u_n(T^{j_l}x)|>\frac12|\phi_0(T^{j_l}x)|\gtrsim q_{n+1}^{-2(l+1)}.
$$

Due to the returning properties of $I_{n}$ in Lemma \ref{lemma.returntimeIn}, it holds that $j_{t}-j_{t-1}\geq q_{n}$.
By $(2)_n$ in Proposition \ref{prop.lowerbound},
\beqs
\|A_m^{j_{t}-j_{t-1}}(T^{j_{t-1}}x)\|\geq \lambda_m^{j_t-j_{t-1}}\geq \lambda_m^{q_n},\quad 1\leq t\leq p.
\eeqs
Note that $\alpha$ is a finite-Liouvillean number, i.e. $q_{n+1}\leq C e^{\beta q_n}$ for sufficiently large $n$. Then it holds for $\lambda$ sufficiently large,
\beq\label{normbigangle}\|A_m^{j_{t}-j_{t-1}}(T^{j_{t-1}}x)\|\geq \lambda_m^{q_n} \gg q_{n+1}^{2(l+1)}.\eeq
Then repeatedly applying Lemma \ref{lemma.basic} to (\ref{decomposition}) with (\ref{normbigangle}), we obtain
\begin{align*}\|A_m^{i_1-i_0}(T^{i_0}x)\|
&\geq \prod_{t=1}^p\|A_m^{j_{t}-j_{t-1}}(T^{j_{t-1}}x)\|\cdot \prod_{t=1}^{p-1}\left(\frac12|\phi_0(T^{j_t}x)|-\lambda_m^{-q_{n}}\right)\\
&\geq \prod_{t=1}^p\lambda_m^{j_{t}-j_{t-1}}\cdot C^{p-1}\cdot \prod_{t=1}^{p-1}\big(\text{dist}\left(T^{j_t}x,0\right)\big)^{l+1}\\
&\geq \lambda_m^{j_{p}-j_{0}}\cdot C^{p-1}\cdot q_{n+1}^{-2(l+1) (p-1)}.
\end{align*}
Due to the returning times of $I_{n}$ given in Lemma \ref{lemma.returntimeIn} (In $[0,q_{n+2}]$, most of them are $q_{n+1}$ and at most one is $q_{n}$), we have $$1\leq p\leq 2[q_{n+1}^{1/2}].$$
This implies that for $x\notin B_{n}$,
\beas
\|A_n^{i}(x)\|&\geq & \|A_n^{i_1-i_0}(T^{i_0}x)\|\cdot \|A_n^{i-i_1}(T^{i_1}x)\|^{-1}\cdot\|A_n^{i_0}(T^{i_0}x)\|^{-1}\\
&\geq & \lambda_m^{j_{p}-j_{0}}\cdot C^{2q_{n+1}^{1/2}}\cdot q_{n+1}^{-4(l+1) q_{n+1}^{1/2}}\cdot \lambda^{-2q_{n+1}}\\
&\geq & \lambda_m^{j_{p}-j_{0}}\cdot \lambda^{i\cdot\left(-4(l+1)\frac{\log q_{n+1}}{\log\lambda \cdot q_{n+1}}+\frac{2\log C}{\log\lambda \cdot q_{n+1}}\right)}\cdot \lambda^{-2q_{n+1}}\\
&\geq & \lambda_m^{i}\cdot \lambda^{i\cdot\left(-4(l+1)\frac{\log q_{n+1}}{\log\lambda \cdot q_{n+1}}+\frac{2\log C}{\log\lambda \cdot q_{n+1}}-4q_{n+1}^{-1/2}\right)}\\
&\geq & \lambda_\infty^{i},
\eeas
where we have used (\ref{def.N}) and (\ref{def.lambdan}). Note $i=[q_{n+1}^{3/2}]$.
Hence we have
\beq\label{lowerbound}
\frac1{[q_{n+1}^{3/2}]}\log \big\|A_m^{[q_{n+1}^{3/2}]}(x)\big\|\geq (1-\frac\varepsilon4)
\log\lambda, \quad x\notin B_{n},\quad m\geq n.\eeq

Now we estimate the lower bound of $L(D_l)$. By subadditivity, the finite LE of $D_l$ converges. Hence there exists a large $N_0$ such that for $N_1\geq N_0$,
$$\left|\frac1{N_1}\int_{\mathbb S^1}\log\big\|D_l^{N_1}(x)\big\| dx-L(D_l)\right|\leq \frac\varepsilon8.$$
Fix $N_1=[q_{n+1}^{3/2}]$ for some large $n$ such that $N_1\geq N_0$.
Since $A_m$ converges to $D_l$ in $C^l$ topology as $m\rightarrow \infty$, there exists a large $N_2>n$ such that for any $m>N_2$,
$$\left|\frac1{N_1}\int_{\mathbb S^1}\log\big\|D_l^{N_1}(x)\big\| dx-\frac1{N_1}\int_{\mathbb S^1}\log\left\|A_m^{N_1}(x)\right\| dx\right|\leq \frac\varepsilon8.$$
On the other hand, by (\ref{lowerbound}) we have
$$
\frac1{N_1}\int_{\mathbb S^1}\log\left\|A_m^{N_1}(x)\right\| dx \geq (1-\frac\varepsilon4)^2\log\lambda-\frac\varepsilon4 \log \lambda\geq (1-\frac34\varepsilon)\log\lambda.
$$
Hence we could conclude
$$L(D_l)\geq (1-\varepsilon)\log\lambda.$$

\subsection{The upper bound of LE}
Suppose that $\alpha$ is not bounded type (otherwise, everything was proved in \cite{WY13}). Then there exist infinitely many $n$'s satisfying $n>N$ and
\beq\label{notfinitetype}q_{n+2}\geq 200q_{n+1}.\eeq
Let $\cdots<m_{j-1}<m_j<m_{j+1}<\cdots$ be the returning times of $x\in I_n^0:=[-b_n,-b_n+|z_{n+1}|)$. By Lemma \ref{lemma.returntimeIn0}, it holds that
$$m_{j+1}-m_j= q_{n+2}\quad \text{or}\quad q_{n+2}+q_{n+1}.$$
Let $m_j=n_{j_0}<n_{j_1}<n_{j_2}<\cdots<n_{j_{p-1}}<n_{j_p}=m_{j+1}$ be the returning times of $I_n$. Here $p=a_{n+2}$ or $a_{n+2}+1$.
By Lemma \ref{lemma.returntimeIn}, it holds that
$$n_{j_1}-n_{j_0}=q_{n},\quad n_{j_i}-n_{j_{i-1}}=q_{n+1},\ 2\leq i\leq p.$$
Also note that for $2\leq i\leq p$, $|T^{n_{j_i}}x-T^{n_{j_{i-1}}}x|=|z_{n+1}|$.
Thus there exist integers $s$ and $t$ such that $2\leq s\leq p$, $2\leq t\leq p$, $t-s\geq \frac p {15}$ and
$$T^{n_i}x\in \frac{I_{n}}{10},\quad \forall  i\in [s,t]\cap \mathbb Z.$$
By (\ref{notfinitetype}), we have $a_{n+2}\geq 150$, and hence $t-s\geq 10$.
Now for any $i\in [s,t-1]\cap \mathbb Z$, we are going to estimate $\|\widetilde A_n^{n_{i+2}-n_i}(T^{n_i}x)\|$.
By Proposition \ref{prop.upperbound}, it holds that
$$\lambda_n^{q_{n+1}}\leq \|\widetilde A_n^{n_{i+1}-n_i}(T^{n_i}x)\|\leq \lambda^{q_{n+1}},$$
$$ \lambda_n^{q_{n+1}} \leq \| \widetilde A_n^{n_{i+2}-n_{i+1}}(T^{n_{i+1}}x)\|\leq \lambda^{q_{n+1}},$$
$$s(\widetilde A_n^{n_{i+1}-n_i}(T^{n_i}x))-u(\widetilde A_n^{n_{i+2}-n_{i+1}}(T^{n_{i+1}}x))=0.$$
Then we could conclude from Lemma \ref{lemma.resonantnorm} that
$$\|\widetilde A_n^{n_{i+2}-n_i}(T^{n_i}x)\|\leq \lambda^{\varepsilon q_{n+1}}.$$
This implies that
$$\|\widetilde A_n^{n_{j_t}-n_{j_s}}(T^{n_{j_s}}x)\|\leq \lambda^{([\frac{t-s}2]\varepsilon+1) q_{n+1}}\leq \lambda^{2 q_{n+1}}.$$
We then consider $$\widetilde A_n^{m_{j+1}-m_j}(T^{m_j}x)=\widetilde A_n^{m_{j+1}-n_{j_t}}(T^{n_{j_t}}x) \cdot \widetilde A_n^{n_{j_t}-n_{j_s}}(T^{n_{j_s}}x)\cdot \widetilde A_n^{m_{j+1}-m_j}(T^{m_j}x).$$
Its norm is smaller than
$$\|\widetilde A_n^{m_{j+1}-m_j}(T^{m_j}x)\|\leq \lambda^{q_n+(p-t+s)q_{n+1}+2q_{n+1}}\leq \lambda^{\frac{299}{300}(m_{j+1}-m_j)} ,$$
where we used $$(t-s)q_{n+1}\geq \frac{pq_{n+1}}{15}\geq \frac{q_{n+2}}{30}\geq \frac{m_{j+1}-m_{j}}{60}$$ in the last inequality.
Note that $j$ is arbitrary.
This implies
$$L(\widetilde A_n)\leq (1-\delta)\log \lambda,\quad \delta=\frac1{300}.$$

\section{Proof of Proposition \ref{prop.lowerbound} and \ref{prop.upperbound}.}
In this section, we construct two convergent sequences of cocycles $\{A_n\}$ and $\{\widetilde A_n\}$.
Firstly, we inductively construct $\{A_n\}$ with increasing iterated norms through gradually adjustments on the angles in each step.
Secondly, we perturb the cocycles $\{A_n\}$ to another convergent sequence of cocycles $\{\widetilde A_n\}$ such that the angle between adjacent blocks vanishes, which leads to cancelation on norms.
We stress that under weak Liouvillean condition on frequency, we require sharp estimates on the angles and norms, which are given by the precise distribution of trajectory.

\subsection{Proof of Proposition \ref{prop.lowerbound}.}
We are going to prove the proposition by induction.
\subsubsection{The $N$th step.}
Let $A(x)=\Lambda\cdot R_{\frac{\pi}{2}-\phi_0(x)}$.
Consider the decomposition
$$A^{r_N^+}(x)=A(T^{r_N^+-1}x)\cdots A(Tx)A(x).$$
Note that $T^jx\notin I_N$ for $1\leq j\leq r_N^+(x)$ and hence $$\text{dist}(T^jx,0)\geq b_N,\quad |\phi_0(T^jx)| \geq b_N^{l+1}.$$
Moreover, by (\ref{def.lambda}), we have
$$\|A(T^jx)\|= \lambda \geq b_N^{-100(l+1)}\gg b_{N}^{-(l+1)}.$$
Then we successively apply Lemma \ref{lemma.basic} to $A^{j}(x)=A(T^{j-1}x)A^{j-1}(x)$ for $j=2,3,\cdots, r_N^+$.
Then we have
$$\|A^{j}(x)\|\geq \lambda^{j}\cdot b_N^{(l+1)j}= \lambda_N^j,\quad 2\leq j\leq r_N^+.$$
Hence it holds that
\beq\label{Ahyperbolic}
\{A(x),A(Tx),\cdots A(T^{r_N^+-1}x)\}\ \text{is}\ \lambda_N-\text{hyperbolic}, \quad \forall x\in I_N.
\eeq
Let $\overline s_N (x):= {s(A^{r_N^+}(x))}$ and $\overline u_N (x):= {u(A^{-r_N^-}(x))}$.
Moreover, by repeatedly applying Lemma \ref{lemma.basic} and Lemma \ref{lemma.hderivative}, we have
\beq\label{sNphi0uN}
\|\overline s_N-\phi_0\|_{C^k}\leq \lambda^{-2+k\eta},\quad \|\overline u_N\|_{C^k}\leq \lambda^{-2+k\eta},
\eeq
\beq\label{panormA}
\left|\partial^k \|A^{r_N^+}(x)\|\right|\leq \|A^{r_N^+}(x)\|^{1+k\eta},\quad 1\leq k\leq l.
\eeq

Define $e_N(x)$ to be the following $2\pi$-periodic function:
$$
e_N(x)=\left\{
         \begin{array}{ll}
           \phi_0(x)-(\overline s_N-\overline u_N)(x), & \hbox{$x\in \frac{I_N}{10}$;} \\
           h_N^\pm(x), & \hbox{$x\in I_N\backslash \frac{I_N}{10}$;} \\
           0, & \hbox{$x\in \mathbb S^1\backslash I_N$,}
         \end{array}
       \right.
$$
where $h_N^\pm(x)$ is a $C^l$-function satisfies
$$\frac{d^jh_N^\pm}{dx^j}\left(\pm\frac{b_N}{10}\right)= \frac{d^j\phi_0}{dx^j}\left(\pm\frac{b_N}{10}\right) +\frac{d^j(\overline s_N-\overline u_N)}{dx^j}\left(\pm\frac{b_N}{10}\right),$$
$$\frac{d^jh_N^\pm}{dx^j}\left(\pm b_N\right)= 0,\quad i=1,2,\quad j=0,1,2,\cdots,l.$$
From (\ref{sNphi0uN}), we have for $x\in \frac{I_N}{10}$ and $1\leq k\leq l$, $\|e_N(x)\|_{C^k}\leq \lambda^{-2+k\eta}$.
By Cramer's rules, we have
$$\|h_N^\pm(x)\|_{C^k}\leq \lambda^{-2+k\eta},\quad 1\leq k\leq l.$$
Hence it holds that for $x\in \mathbb S^1$ and $1\leq k\leq l$,
\beq\label{eNnorm}\|e_N(x)\|_{C^k}\leq \lambda^{-2+k\eta}.\eeq

Now define
$$\phi_N(x)=\phi_0(x)+e_N(x),\quad A_N(x)=\Lambda\cdot R_{\frac{\pi}{2}-\phi_N(x)},$$
$$s_N(x)={s(A_N^{r_N^+}(x))},\quad u_N(x)={s(A_N^{-r_N^-}(x))}.$$
Then by (\ref{eNnorm}) we have
$$\|\phi_N-\phi_0\|_{C^k}=\|e_N\|_{C^k}\leq \lambda^{-2+k\eta},\quad x\in \mathbb S^1.$$
Note that for $1\leq i\leq r_N^+-1$ and $x\in I_N$, $T^ix\in \mathbb S^1\backslash I_N$. This implies $A_N(x)=A(x)$ for $x\notin I_N$. Then we have for $x\in I_N$,
\beq\label{ANrN+}A_N^{r_N^+}(x)=A^{r_N^+}(x)\cdot \left(A^{-1}(x)A_N(x)\right)= A^{r_N^+}(x)\cdot R_{-e_N(x)}.\eeq
Similarly, it holds that
\beq\label{ANrN-}A_N^{-r_N^-}(x)=R_{-e_N(T^{-r_N^-}x)}\cdot  A^{-r_N^-}(x),\quad x\in I_N.\eeq
Note that rotation does not change the norm of a vector. Then combining (\ref{Ahyperbolic}), (\ref{panormA}), (\ref{ANrN+}) and (\ref{ANrN-}), we have
\beqs
\{A_N(x),A_N(Tx),\cdots A_N(T^{r_N^+-1}x)\}\ \text{is} \ \lambda_N-\text{hyperbolic},\quad  \forall x\in I_N;
\eeqs
\beqs
\Big|\partial^k \|A_N^{r_N^+}(x)\|\Big|\leq \|A_N^{r_N^+}(x)\|^{1+k\eta},\quad 1\leq k\leq l.
\eeqs
Moreover, direct computation from (\ref{ANrN+}) and (\ref{ANrN-}) shows
$$e_N(x)=\big(s_N(x)-u_N(x)\big)-\big(\overline s_N(x)-\overline u_N(x)\big),\quad \forall x\in I_N.$$
Due to the definition of $e_N(x)$, it holds that
$$s_N(x)-u_N(x)=\phi_0(x),\quad \forall x\in \frac{I_N}{10},$$
$$\|s_N(x)-u_N(x)-\phi_0(x)\|_{C^k}<\lambda^{-2+k\eta},\quad \forall x\in I_N\backslash\frac{I_N}{10}.$$
This also implies that for $x\in I_N$ and $1\leq k \leq l$,
$$\|s_N-u_N\|_{C^k}<\|\phi_0\|_{C^k}+\lambda^{-2+k\eta}\leq \lambda^{\eta\cdot q_N}.$$
Until now we have verified all the conditions in Proposition \ref{prop.lowerbound} for the $N$th step.

\subsubsection{The $n$th step.}
Now we assume that $A_N, \cdots, A_{n-1}$ satisfy all the conditions in the proposition.
Define $s_{n-1}(x):={s(A_{n-1}^{r_{n-1}^+}(x))}$ and $u_{n-1}(x):={u(A_{n-1}^{r_{n-1}^-}(x))}$ for $x\in \mathbb S^1$. Then
\beq\label{Anhyperbolic}
  \{A_{n-1}(x),A_{n-1}(Tx),\cdots A_{n-1}(T^{r_{n-1}^+-1}x)\}\ \text{is}\ \lambda_{n-1}-\text{hyperbolic},\quad  \forall x\in I_{n-1};
\eeq
\beq s_{n-1}(x)-u_{n-1}(x)=\phi_0(x),\quad x\in \frac{I_{n-1}}{10};\eeq
\beq
|s_{n-1}(x)-u_{n-1}(x)|\geq \frac12|\phi_0(x)|\geq \left(\frac{b_{n-1}}{20}\right)^{l+1},\quad x\in I_{n-1}\backslash \frac{I_{n-1}}{10}.\eeq
Moreover, from assumption we have that for $x\in \frac{I_{n-1}}{10}$ and $1\leq k \leq l$,
\beq
\Big|\partial^k \|A_{n-1}^{r_{n-1}^+}(x)\|\Big|\leq \|A_{n-1}^{r_{n-1}^+}(x)\|^{1+k\eta};
\eeq
\beq\label{paksnun}
|\partial^k( s_{n-1}(x)-u_{n-1}(x))|\leq  \lambda^{r_{n-1}\cdot\eta}.
\eeq
\begin{lemma}\label{lemma.hyperbolic}
Let $x_0=x,x_1,\cdots, x_m$ be a $T$-orbit with $x_0,x_m\in I_n$ and $x_i\notin I_n$ for $0<i<m$. Then $\{A_{n-1}(x_0),\cdots A_{n-1}(x_m)\}$ is $\lambda_n$-hyperbolic.
Moreover, letting $\overline s_n (x):= {s(A_{n-1}^{r_n^+}(x))}$ and $\overline u_n (x):= {u(A_{n-1}^{-r_n^-}(x))}$, we have for $x\in I_{n}$ and $1\leq k\leq l$,
\beqs
\|\overline s_n-s_{n-1}\|_{C^k},\ \|\overline u_n-u_{n-1}\|_{C^k}\leq \lambda^{-\frac13r_{n-1}},
\eeqs
\beqs
\left|\partial^k\|A_{n-1}^{r_n^+}(x)\|\right|\leq \|A_{n-1}^{r_n^+}(x)\|^{1+k\eta}.
\eeqs
\end{lemma}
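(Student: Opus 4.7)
The plan is to establish $\lambda_n$-hyperbolicity and the $C^k$ estimates by decomposing the orbit from $x_0\in I_n$ to $x_m\in I_n$ into its intermediate first-return excursions through $I_{n-1}$, then iterating the non-resonant product estimate (Lemma \ref{lemma.basic}) and its $C^k$ refinement (Lemma \ref{lemma.hderivative}).

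First I would list the intermediate returns to $I_{n-1}$ as $0=t_0<t_1<\cdots<t_p=m$; since $I_n\subset I_{n-1}$ these are well-defined, and by Lemma \ref{lemma.returntimeIn} each gap satisfies $t_{k+1}-t_k=r_{n-1}^+(x_{t_k})\in\{q_{n-1},q_n\}$. By induction hypothesis $(2)_{n-1}$ each block $B_k:=A_{n-1}^{t_{k+1}-t_k}(x_{t_k})$ is hyperbolic with $\|B_k\|\geq\lambda_{n-1}^{t_{k+1}-t_k}$, while by $(3.1)_{n-1}$ and $(3.2)_{n-1}$ the gluing angle $|s(B_k)-u(B_{k-1})|=|s_{n-1}(x_{t_k})-u_{n-1}(x_{t_k})|$ is at least $\tfrac12|\phi_0(x_{t_k})|\geq\tfrac12 b_n^{l+1}$, using that $x_{t_k}\in I_{n-1}\setminus I_n$ forces $|x_{t_k}|\geq b_n$.

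The crux is verifying the non-resonance inequality $\lambda_{n-1}^{-q_{n-1}}\ll b_n^{l+1}$. Using $b_n\gtrsim q_{n+1}^{-1}$ and the finite-Liouvillean bound $q_{n+1}\leq Ce^{\beta q_n}$, this reduces to $q_{n-1}\log\lambda_{n-1}\gg(l+1)\beta q_n$, which is secured by the choices (\ref{def.lambda}) and (\ref{def.lambdan}). Granted non-resonance, I apply Lemma \ref{lemma.basic} inductively to the telescoping $A_{n-1}^{t_{k+1}}(x)=B_k\cdot A_{n-1}^{t_k}(x)$: each gluing loses at most a factor $|\sin\theta|\gtrsim b_n^{l+1}$, and the total loss over the $p\lesssim q_{n+1}/q_{n-1}$ gluings is absorbed into the factor $1-2(l+1)q_{n-1}^{-1/2}$ built into (\ref{def.lambdan}). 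This yields $\|A_{n-1}^j(x)\|\geq\lambda_n^{j(1-\varepsilon)}$ for all $j\leq m$, and applying the same argument to the inverse orbit gives the matching lower bound in the backward direction, establishing $\lambda_n$-hyperbolicity.

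For the $C^k$ claims I would substitute Lemma \ref{lemma.hderivative} for Lemma \ref{lemma.basic} at every gluing, inheriting its derivative hypotheses on each $\|B_k\|$ from $(2)_{n-1}$ and on the angles from (\ref{paksnun}). The conclusion on $|\partial^k\|A_{n-1}^{r_n^+}\||$ is propagated directly by the lemma. For the drift of the $s$-direction away from $s_{n-1}(x_0)=s(B_0)$, Lemma \ref{lemma.hderivative} produces at the $k$-th gluing a $C^k$ correction bounded by $\|A_{n-1}^{t_k}(x)\|^{-2+(k+1)\eta}$; the dominant first-step contribution is already of order $\lambda_{n-1}^{-(2-O(\eta))q_{n-1}}$, and subsequent corrections decay super-geometrically due to the rapidly growing norms, so the sum stays comfortably below $\lambda^{-r_{n-1}/3}$. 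The analogous estimate on the backward orbit yields the bound for $\overline{u}_n-u_{n-1}$. The main technical hurdle is ensuring the hypothesis $|\partial^k\theta|\leq e_0^{k\eta}$ of Lemma \ref{lemma.hderivative} remains valid across all the gluings: at each step the ratio of the accumulated norm to the minimum block norm must stay in the admissible range. This is precisely where the sharp return-time dichotomy of Lemma \ref{lemma.returntimeIn}, that only $q_{n-1}$ or $q_n$ are possible gap lengths, is indispensable, as it prevents wild variations in norms across consecutive blocks and keeps the iterative derivative bounds self-consistent throughout the entire induction.
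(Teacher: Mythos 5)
Your decomposition into first-return excursions through $I_{n-1}$ is the right framework, but two steps in the non-resonance bookkeeping fail precisely for the frequencies that are not of bounded type, which is the entire point of this lemma. First, you test the worst-case angle $\frac12 b_n^{l+1}$ against the worst-case block norm $\lambda_{n-1}^{q_{n-1}}$, giving the check $\lambda_{n-1}^{-q_{n-1}}\ll b_n^{l+1}\approx q_{n+1}^{-(l+1)}$. After taking logarithms and using $\log q_{n+1}\lesssim\beta q_n$, this forces $\log\lambda\gg (l+1)\beta\, q_n/q_{n-1}$; since $\lambda$ is fixed once and for all at step $N$ by (\ref{def.lambda}), this holds for all $n$ only if $q_n/q_{n-1}$ is bounded, i.e.\ only if $\alpha$ is of bounded type. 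The paper avoids this by exploiting the precise structure extracted from Lemma \ref{lemma.returntimeIn}: among the gaps $j_t-j_{t-1}$, exactly one equals $q_{n-1}$, and it occurs at the unique gluing where $|T^{j_s}x|,|T^{j_{s+1}}x|\geq b_{n-1}/4$, so the angle there is $\gtrsim b_{n-1}^{l+1}\approx q_n^{-(l+1)}$ rather than $b_n^{l+1}$. The small block norm and the small angle never coincide, and the correctly paired checks reduce to $\log q_n\lesssim\beta q_{n-1}$ and $\log q_{n+1}\lesssim\beta q_n$ respectively, both supplied by finite-Liouvilleanness without bounded type.

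Second, even after replacing your count $p\lesssim q_{n+1}/q_{n-1}$ by the sharper $p\lesssim q_{n+1}/q_n$ (only one gap is $q_{n-1}$, the rest are $q_n$), the uniform floor $b_n^{l+1}$ for every angle gives a total product $\gtrsim b_n^{(l+1)p}$ whose per-iterate exponent loss is roughly $(l+1)\log q_{n+1}/q_n\approx(l+1)\beta$, a fixed constant divided by $\log\lambda$. This cannot be absorbed into the summable budget $2(l+1)q_{n-1}^{-1/2}\log\lambda_{n-1}$ built into (\ref{def.lambdan}); it would make $\lambda_n$ decay geometrically to $1$ and hyperbolicity would be lost. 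The paper closes this gap by observing that the return points $T^{j_t}x$ are orderly spaced at steps $|z_n|$, so $|T^{j_t}x|\gtrsim t|z_n|$, and a Stirling estimate on the resulting factorial bounds the angle product by $\lambda^{-q_{n+1}S_n}$ with $S_n\sim\frac{(l+1)\log q_n}{q_n\log\lambda}=o(q_{n-1}^{-1/2})$. Without the sharp return-time dichotomy (one $q_{n-1}$-gap, far from $0$) and the Stirling refinement it enables, the inductive norm estimate does not close in the finite-Liouvillean generality claimed.
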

\begin{proof}
By assumption, $x_0,x_m\in I_n$ and $x_i\notin I_n$ for $0<i<m$.
This means $m=r_n^+(x)$. By Lemma \ref{lemma.returntimeIn}, we have $m=q_n$ or $q_{n+1}$.
If $m=q_n$, then $r_n^+(x)=r_{n-1}^+(x)=q_n$. It follows from the assumption that $\{A_{n-1}(x_0),\cdots A_{n-1}(x_m)\}$ is $\lambda_n$-hyperbolic and the required estimates are trivial.

Now assume that $m=q_{n+1}$.
Let $0=j_0<j_1<\cdots<j_p=m$ be the returning times to $I_{n-1}$. Then it holds that $j_t-j_{t-1}\geq q_{n-1}$. Moreover, due to the precise analysis on the returning time in the proof of Lemma \ref{lemma.returntimeIn}, there exists $s\approx p/2$ such that
$$j_t-j_{t-1}= q_n,\quad \text{for}\ t\neq s+1,$$
$$ j_{s+1}-j_s=q_{n-1},\quad |T^{j_s}x|\geq \frac{b_{n-1}}4,\quad |T^{j_{s+1}}x|\geq \frac{b_{n-1}}4.$$
By the inductive assumption (\ref{Anhyperbolic}) -- (\ref{paksnun}), we have for $t\neq s+1$,
$$\|A_{n-1}^{j_t-j_{t-1}}(T^{j_{t-1}}x)\|\geq \lambda_{n-1}^{j_t-j_{t-1}}\geq \lambda_{n-1}^{q_n},$$
$$\left|\partial^k\|A_{n-1}^{j_t-j_{t-1}}(T^{j_{t-1}}x)\|\right|\leq \|A_{n-1}^{j_t-j_{t-1}}(T^{j_{t-1}}x)\|^{1+k\eta}.$$
$$\left|s(A_{n-1}^{j_{t+1}-j_t}(T^{j_{t-1}}x)) -u(A_{n-1}^{j_t-j_{t-1}}(T^{j_{t-1}}x))\right| \geq \frac12 |\phi_0(T^{j_{t-1}}x)|\geq b_n^{l+1}.$$
\beqs
\left|\partial^k( s(A_{n-1}^{j_{t+1}-j_t}(T^{j_{t-1}}x)) -u(A_{n-1}^{j_t-j_{t-1}}(T^{j_{t-1}}x))\right|\leq  \lambda^{(j_{t+1}-j_t)\cdot\eta}.
\eeqs
By finite-Liouvillean condition,
$$\lambda_{n-1}^{q_n}\geq \lambda^{(1-\varepsilon)q_n}\gg (100q_{n+1})^{l+1}\geq b_n^{-(l+1)}.$$
Hence we have verified the conditions of Lemma \ref{lemma.basic} and Lemma \ref{lemma.hderivative} to the decomposition
$$A_{n-1}^{j_s}(x)=A_{n-1}^{j_s-j_{s-1}}(T^{j_{s-1}}x)\cdots A_{n-1}^{j_2-j_{1}}(T^{j_{1}}x)A_{n-1}^{j_1}(x).$$
Applying these lemmas inductively, we have
\beq\|A_{n-1}^{j_s}(x)\|\geq \lambda_{n-1}^{j_{s}}\cdot \prod_{1\leq t \leq s-1}\frac12 |\phi_0(T^{j_t}x)|, \label{Ajsnorm}\eeq
\beqs
\left|\partial^k\|A_{n-1}^{j_s}(x)\|\right|\leq \|A_{n-1}^{j_s}(x)\|^{1+k\eta},
\eeqs
\beqs\|u(A_{n-1}^{j_s}(x))-u(A_{n-1}^{j_s-j_{s-1}}(T^{j_{s-1}}x))\|_{C^k} <\lambda^{-(2-k\eta)q_{n+1}}.
\eeqs
Recall the proof of Lemma \ref{lemma.returntimeIn0} that $\{T^{j_t}x\}$ is orderly arranged with a step size $|z_n|$.
Hence $$s\leq \frac{b_{n-1}}{|z_n|}\leq 4\frac{q_{n+1}}{q_n},$$
and we have
$$|T^{j_1}x|\geq b_n\geq \frac14 q_{n+1}^{-1}, $$
$$  |T^{j_2}x|\geq b_n+|z_n|\geq \frac12q_{n+1}^{-1}, $$
$$ |T^{j_3}x|\geq b_n+2|z_n|\geq q_{n+1}^{-1},\quad \cdots,$$
$$ |T^{j_{s-1}}x|\geq b_n+(s-2)|z_n|\geq \frac{s-2}2q_{n+1}^{-1}.$$
This leads to
\beas
\prod_{1\leq t \leq s-1}\frac12 |\phi_0(T^{j_t}x)|
&\geq& \frac14 q_{n+1}^{-1}\cdot \prod_{1\leq t \leq 4q_{n+1}/q_n} \frac12\cdot \left(\frac{t}{2}q_{n+1}^{-1}\right)^{l+1}\\
&\geq& \frac14 q_{n+1}^{-1}\cdot \left(\frac14q_{n+1}^{-1}\right)^{\frac{4(l+1)q_{n+1}}{q_n}} \cdot \left(\left(\frac{4q_{n+1}}{q_n}\right)!\right)^{l+1}.
\eeas
Then we apply Stirling's formula to the righthand side, and thus obtain
\bea\nonumber
\prod_{1\leq t \leq s-1}\frac12 |\phi_0(T^{j_t}x)|
&\geq& \frac14 q_{n+1}^{-1}\cdot \left(\frac{8\pi q_{n+1}}{q_n}\right)^{\frac{l+1}2}\cdot \left(eq_n\right)^{-\frac{4(l+1)q_{n+1}}{q_n}}\\ \label{stirling}
&\geq& \lambda^{-q_{n+1}\cdot S_n},
\eea
where
$$S_n:=\frac1{\log\lambda}\cdot\left(\frac{\log(4q_{n+1})}{q_{n+1}} +\frac{4(l+1)\log(eq_n)}{q_n}\right).$$
Then by (\ref{Ajsnorm}) and (\ref{stirling}), we have
$$\|A_{n-1}^{j_s}(x)\|\geq \lambda_{n-1}^{j_{s}}\cdot \lambda^{-q_{n+1}\cdot S_n}. $$
Similar as above, we could obtain
$$\|A_{n-1}^{m-j_{s+1}}(T^{j_{s+1}}x)\|\geq \lambda_{n-1}^{m-j_{s+1}}\cdot \lambda^{-q_{n+1}\cdot S_n},$$
\beqs
\left|\partial^k\|A_{n-1}^{m-j_{s+1}}(T^{j_{s+1}}x)\|\right|\leq \|A_{n-1}^{m-j_{s+1}}(T^{j_{s+1}}x)\|^{1+k\eta}.
\eeqs
$$\|s(A_{n-1}^{m-j_{s+1}}(T^{j_{s+1}}x))-s(A_{n-1}^{j_{s+2}-j_{s+1}}(T^{j_{s+1}}x))\|_{C^k}<\lambda^{-q_{n+1}(2-k\eta)}.$$
On the other hand, by the inductive assumption (\ref{Anhyperbolic}) -- (\ref{paksnun}), we have
$$\|A_{n-1}^{j_{s+1}-j_{s}}(T^{j_s}x)\|\geq \lambda_{n-1}^{j_{s+1}-j_{s}}=\lambda_{n-1}^{q_{n-1}},$$
\beqs
\left|\partial^k\|A_{n-1}^{j_{s+1}-j_{s}}(T^{j_s}x)\|\right|\leq \|A_{n-1}^{j_{s+1}-j_{s}}(T^{j_s}x)\|^{1+k\eta},
\eeqs
$$|s(A_{n-1}^{j_{s+1}-j_{s}}(T^{j_s}x))-u(A_{n-1}^{j_s-j_{s-1}}(T^{j_{s-1}}x))|\geq \frac12|\phi_0(T^{j_s}x)|\geq \left(\frac{b_{n-1}}{8}\right)^{l+1},$$
$$|s(A_{n-1}^{j_{s+2}-j_{s+1}}(T^{j_{s+1}}x))-u(A_{n-1}^{j_{s+1}-j_{s}}(T^{j_{s}}x))|\geq \frac12|\phi_0(T^{j_{s+1}}x)|\geq \left(\frac{b_{n-1}}{8}\right)^{l+1}.$$
By finite-Liouvillean condition,
$$\lambda_{n-1}^{q_{n-1}}\geq \lambda^{(1-\varepsilon)q_{n-1}}\gg (100q_{n+1})^{l+1}\geq \left(\frac{b_{n-1}}{8}\right)^{-(l+1)}.$$
Now we have verified all the conditions in Lemma \ref{lemma.basic} and Lemma \ref{lemma.hderivative} to the decomposition
$$A_{n-1}^m=A_{n-1}^{m-j_{s+1}}(T^{j_{s+1}}x) A_{n-1}^{j_{s+1}-j_{s}}(T^{j_s}x)A_{n-1}^{j_{s}}(x).$$
Applying these lemmas twice with (\ref{def.lambdan}), we could obtain
$$\|A_{n-1}^{m}(x)\|\geq \lambda_{n-1}^m \lambda^{-2q_{n+1}\cdot T_n}\geq \lambda_n^{m} ,\quad T_n=S_n+\frac1{\log\lambda}\cdot \frac{(l+1)\log(20q_n)}{q_n},$$
which shows that
\begin{center}
  $\{A_{n-1}(x_0),\cdots A_{n-1}(x_m)\}$ is $\lambda_n$-hyperbolic,\quad  $\forall x_0,x_m\in I_n$,
\end{center}
And moreover, we have for $x\in I_n$
\beqs
\|\overline s_n-s_{n-1}\|_{C^k},\ \|\overline u_n-u_{n-1}\|_{C^k}\leq \lambda^{-\frac13r_{n-1}},
\eeqs
\beqs
\left|\partial^k\|A_{n-1}^{r_n^+}(x)\|\right|\leq \|A_{n-1}^{r_n^+}(x)\|^{1+k\eta}.
\eeqs
\end{proof}

Define $e_n(x)$ to be the following $2\pi$-periodic function:
$$
e_n(x)=\left\{
         \begin{array}{ll}
           (s_{n-1}-u_{n-1})(x)-(\overline s_n-\overline u_n)(x), & \hbox{$x\in \frac{I_n}{10}$;} \\
           h_n^\pm(x), & \hbox{$x\in I_n\backslash \frac{I_n}{10}$;} \\
           0, & \hbox{$x\in \mathbb S^1\backslash I_n$,}
         \end{array}
       \right.
$$
where $h_n^\pm(x)$ is a $C^l$-function satisfies
$$\frac{d^jh_n^\pm}{dx^j}\left(\pm\frac{b_n}{10}\right)= \frac{d^j(s_{n-1}-u_{n-1})}{dx^j}\left(\pm\frac{b_n}{10}\right) +\frac{d^j(\overline s_n-\overline u_n)}{dx^j}\left(\pm\frac{b_n}{10}\right),$$
$$\frac{d^jh_n^\pm}{dx^j}\left(\pm b_n\right)= 0,\quad i=1,2,\quad j=0,1,2,\cdots,l.$$
From Lemma \ref{lemma.hyperbolic}, we have for $x\in \frac{I_n}{10}$ and $1\leq k\leq l$, $\|e_n(x)\|_{C^k}\leq \lambda^{-\frac13r_{n-1}}$.
By Cramer's rules, we have
$$\|h_n^\pm(x)\|_{C^k}\leq \lambda^{-\frac13r_{n-1}},\quad 1\leq k\leq l.$$
Hence it holds that for $x\in \mathbb S^1$ and $1\leq k\leq l$,
\beq\label{ennorm}\|e_n(x)\|_{C^k}\leq \lambda^{-\frac13r_{n-1}}.\eeq

Now define
$$\phi_n(x):=\phi_{n-1}(x)+e_n(x),\quad A_n(x):=\Lambda\cdot R_{\frac{\pi}{2}-\phi_n(x)},$$
$$s_n(x):={s(A_n^{r_n^+}(x))},\quad u_n(x):={s(A_n^{-r_n^-}(x))}.$$
Similarly as in the $N$th step, we could easily verify that for $x\in I_n$,
$$A_n^{r_n^+}(x)=A_{n-1}^{r_n^+}(x)\cdot R_{-e_n(x)},\quad A_n^{-r_n^-}(x)=R_{-e_n(T^{-r_n^-}x)}\cdot  A_{n-1}^{-r_n^-}(x),$$
\beq\label{ensnun}e_n(x)=\left(s_{n}(x)-u_{n}(x)\right)-\left(\overline s_n(x)-\overline u_n(x)\right).\eeq
Then combined with Lemma \ref{lemma.hyperbolic} and the fact that rotations do not affect the norms of the matrices, we have
\beqs
 \{A_{n}(x),A_{n}(Tx),\cdots A_{n}(T^{r_{n}^+-1}x)\}\  \text{is}\ \lambda_{n}-\text{hyperbolic},\quad x\in I_n;
\eeqs
\beqs
\left|\partial^k\|A_{n}^{r_n^+}(x)\|\right|\leq \|A_{n}^{r_n^+}(x)\|^{1+k\eta},\quad \left|\partial^k\|A_{n}^{r_n^-}(x)\|\right|\leq \|A_{n}^{r_n^-}(x)\|^{1+k\eta}.
\eeqs
Moreover, by (\ref{ennorm}), (\ref{ensnun}) and Lemma \ref{lemma.hyperbolic} we have
\begin{align*}
\left|\partial^k\left(s_{n}(x)-u_{n}(x)\right)\right|&\leq \left|\partial^k\left(s_{n-1}(x)-u_{n-1}(x)\right)\right| +O(\lambda_n^{-\frac13r_{n-1}})\\
&\leq |\partial^k \phi_0|+\sum_{n\geq N}O(\lambda_n^{-\frac13r_{n-1}})\leq 2\| \phi_0\|_{C^k}.
\end{align*}
And thus
$$\left\|s_{n}(x)-u_{n}(x)\right\|_{C^k}\leq 2\|\phi_0\|_{C^k}\ll\lambda^{q_n\cdot\eta}.$$
Furthermore, we could easily verify that for $x\in I_{n}$,
$$\left\|\phi_n(x)-\phi_{n-1}(x)\right\|_{C^l}\leq \lambda_n^{-\frac13r_{n-1}}.$$
$$s_{n}(x)-u_{n}(x)=\phi_0(x),\quad x\in \frac{I_{n}}{10},$$
$$|s_{n}(x)-u_{n}(x)|\geq \frac12|\phi_0(x)|\geq \left(\frac{b_{n}}{20}\right)^{l+1},\quad I_{n}\backslash \frac{I_{n}}{10}.$$
Until now we have obtained all the conclusions in Proposition \ref{prop.lowerbound} for the $n$th step.

\subsection{Proof of Proposition \ref{prop.upperbound}.}
Define $e_n(x)$ to be the following $2\pi$-periodic function:
$$
\widetilde e_n(x)=\left\{
         \begin{array}{ll}
           (s_n-u_n)(x), & \hbox{$x\in \frac{I_n}{10}$;} \\
           \widetilde h_n^\pm(x), & \hbox{$x\in I_n\backslash \frac{I_n}{10}$;} \\
           0, & \hbox{$x\in \mathbb S^1\backslash I_n$,}
         \end{array}
       \right.
$$
where $\widetilde h_n^\pm(x)$ is a $C^l$-function satisfies
$$\frac{d^j \widetilde h_n^\pm}{dx^j}\left(\pm\frac{b_n}{10}\right)= \frac{d^j(s_n-u_n)}{dx^j}\left(\pm\frac{b_n}{10}\right),$$
$$\frac{d^j\widetilde h_N^\pm}{dx^j}\left(\pm b_n\right)= 0,\quad i=1,2,\quad j=0,1,2,\cdots,l.$$
From Proposition \ref{prop.lowerbound}, for $x\in \frac{I_n}{10}$ and $0\leq k\leq l$, $\|s_n-u_n\|_{C^k}=O(q_{n+1}^{-(l+1-k)})$. Hence from Cramer's rule we have that $\|\widetilde h_n^\pm\|_{C^k}=O(q_{n+1}^{-1})$.
This shows that for $x\in \mathbb S^1$,
$$\|\widetilde e_n^\pm\|_{C^k}=O(q_{n+1}^{-1}).$$

Define $$\widetilde \phi_n(x):=\phi_n(x)+\widetilde e_n(x),\quad \widetilde A_n(x):=\Lambda\cdot R_{\frac\pi2-\widetilde \phi_n(x)}.$$
Hence it is clear that $\widetilde A_n\rightarrow D_l$ in $C^l$ topology.
Moreover, since $\widetilde \phi_n(x)=\phi_n(x)$ on $\mathbb S^1\backslash I_n$ and for each $x\in I_n$, $\{A_n(x),A_n(Tx),\cdots, A_n(T^{r_n^+-1}x)\}$ is $\lambda_n$-hyperbolic (obtained by Proposition \ref{prop.lowerbound}), we have
\begin{center}
  $\{\widetilde A_{n}(x),\widetilde A_{n}(Tx),\cdots,\widetilde A_{n}(T^{r_{n}^+-1}x)\}$ is $\lambda_{n}$-hyperbolic,\quad $x\in I_n$.
\end{center}
Define $$\widetilde s_n(x)={s(\widetilde A_n^{r_n^+}(x))},\quad \widetilde u_n(x)={u(\widetilde A_n^{-r_n^-}(x))}.$$
Similar to (\ref{ensnun}), it is also direct to verify that $$\widetilde s_n(x)-\widetilde u_n(x)=s_n(x)-u_n(s)-\widetilde e_n(x).$$ Then it holds that for $x\in \frac{I_n}{10}$,
$$\widetilde s_n(x)=\widetilde u_n(x).$$
Until now we have obtained all the conclusions in Proposition \ref{prop.upperbound}.

\section{Proof of Theorem \ref{thm.main2}: Discontinuity of LE in $C^\infty$ topology}
In this section, we prove discontinuity of LE in $C^\infty$ topology. Compared with \cite{WY13}, we define the sample function as $\phi_0(x)\approx\exp\big(-\big(\log\frac{1}{|x|}\big)^{\sigma}\big)$ and proceed Young's inductive argument with Brjuno frequency.
Our estimates are more technical and sharp due to the weaker condition on frequency.

We first introduce the topology in $C^\infty(\mathbb S^1)$. For any smooth function $f$ defined on $\mathbb S^1$, let
\beqs
\|f\|_{C^k}:=\max_{j\in\{0,1,2,\cdots,k\}}\max_{x\in \mathbb S^1}|\partial^kf(x)|,\quad k\in \mathbb N.
\eeqs
It is clear that $C^{\infty}(\mathbb S^1)$ is a complete metric space with the metric $d(f,g):=\sum_{k\in \mathbb N}\frac{2^{-k}\cdot\|f-g\|_{C^k}}{\|f-g\|_{C^k}+1}$. Moreover, $f_n\rightarrow f$ in $C^{\infty}(\mathbb S^1)$ topology if and only if $\|f_n-f\|_{C^k}\rightarrow 0$ as $n\rightarrow \infty$ for any $k\in \mathbb N$.

We now give the following settings.
\begin{itemize}
  \item The frequency: Let $\alpha$ be a Brjuno number, i.e. there exists $0<\delta<1$ such that
        \beqs\beta_\delta:=\limsup_{n\rightarrow\infty} \frac{\log q_{n+1}}{q_n^\delta}<\infty.\eeqs
  \item Critical interval $I_n$ and first returning time $r_n^{\pm}$ are defined as in Section 4.
  \item The sample function: Let $\sigma>1$ be a constant satisfying $\sigma\delta^\frac12<1$. The $2\pi$-periodic smooth function $\phi_0\in C^{\infty}(\mathbb S^1)$ is defined as for $x\in [0,2\pi)$,
     $$\phi_0(x)=\exp\Big(-\big(\log\frac{8\pi}{x}\big)^{\sigma} -\big(\log\frac{8\pi}{2\pi-x}\big)^{\sigma}\Big).$$
\end{itemize}
Fix $\varepsilon>0$ small.
Let $N$ and $\lambda$ be sufficiently large such that
\beq\label{choiceN3}\sum_{n\geq N}q_{n-1}^{-(1-\delta^{\frac12})}<\frac{\varepsilon}{10}, \quad  \sum_{n>N} q_{n+1}^{-\frac{1}2} \leq\frac\varepsilon4, \quad
\lambda>\max\Big\{e^{(2\beta_\delta)^\sigma} ,e^{q_N^{q_N}}\Big\}.\eeq
Moreover, we inductively define $\lambda_n$ by
$$\lambda_{N-1}:=\lambda,\quad \log\lambda_n:=\log\lambda_{n-1}-10q_{n-1}^{-(1-\delta^{\frac12})}.$$
As $n\rightarrow \infty$ the sequence $\lambda_n$ decreases to some $\lambda_\infty$, which satisfies $\lambda^{1-\frac\varepsilon{10}}<\lambda_\infty<\lambda$.
\begin{prop}\label{lemma.Ilowerbound}
There exist functions $\phi_n(x)$ on $\mathbb S^1$ ($n=N,N+1,\cdots$) such that
\begin{description}
  \item[$(1)_n$] The function $\phi_n(x)$ is exactly the same as $\phi_{n-1}(x)$ for $x\notin I_n$, and moreover it holds that for $1\leq k\leq \log n$, $$\left\|\phi_n(x)-\phi_{n-1}(x)\right\|_{C^k}\leq \lambda_n^{-\frac1{100}r_{n-1}}.$$
  \item[$(2)_n$] The sequence $\{A_n(x), A_n(Tx),\cdots,A_n(T^{r_n^+-1}x)\}$ is $\lambda_n$-hyperbolic for $x\in I_n$, where $A_n(x):=\Lambda R_{\frac\pi2-\phi_n(x)}$.
  \item[$(3)_n$] Let $s_n(x)={s(A_n^{r_n^+}(x))}$ and $u_n(x)={s(A_n^{-r_n^-}(x))}$. Then it holds that
\begin{description}
  \item[$(3.1)_n$] $s_n(x)-u_n(x)=\phi_0(x)$, for $x\in \frac{I_n}{10}$;
  \item[$(3.2)_n$] $|s_n(x)-u_n(x)|\geq \frac12 |\phi_0(x)|\geq e^{-(-\log (b_n/20))^{\sigma}}$, for $x\in I_n\backslash \frac{I_n}{10}$.
\end{description}
\end{description}
\end{prop}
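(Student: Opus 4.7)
The plan is to carry out the same inductive construction as in Proposition \ref{prop.lowerbound}, but with two substitutions dictated by the present setup: the polynomial degeneracy $|x|^{l+1}$ of the sample function is replaced by the subanalytic degeneracy $\phi_0(x)\asymp \exp(-(\log(1/|x|))^\sigma)$ at $x=0$, and the finite-Liouvillean bound on $q_{n+1}$ is replaced by the Brjuno bound $\log q_{n+1}\lesssim q_n^\delta$. As before, at each stage I first verify that the cocycle inherited from step $n-1$ is $\lambda_n$-hyperbolic on its first return to $I_n$, and then perturb it by a smooth bump supported in $I_n$ to enforce $s_n-u_n=\phi_0$ on $I_n/10$.

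For the base case $n=N$, set $A(x)=\Lambda R_{\pi/2-\phi_0(x)}$. For $x\in I_N$ and $1\leq j\leq r_N^+(x)-1$, the orbit point $T^j x$ lies outside $I_N$, so $|\phi_0(T^j x)|\geq e^{-(-\log b_N)^\sigma}$, and since $\lambda>e^{q_N^{q_N}}$ dominates the reciprocal of this angle by a huge margin, Lemma \ref{lemma.basic} applies inductively to give $\|A^j(x)\|\geq \lambda_N^j$. Defining $e_N$ on $I_N/10$ to cancel $\overline s_N-\overline u_N$ and interpolating to $I_N$ by a $C^\infty$ bump, the corrected cocycle $A_N=\Lambda R_{\pi/2-(\phi_0+e_N)}$ satisfies $(1)_N$--$(3)_N$. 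For the inductive step, fix $x\in I_n$; by Lemma \ref{lemma.returntimeIn} one has $r_n^+(x)\in\{q_n,q_{n+1}\}$, and only the second case is nontrivial. Let $0=j_0<j_1<\cdots<j_p=q_{n+1}$ be the successive returning times to $I_{n-1}$. As in the proof of Lemma \ref{lemma.hyperbolic}, all gaps equal $q_n$ except for one exceptional gap $j_{s+1}-j_s=q_{n-1}$ at an index $s$ with $|T^{j_s}x|,|T^{j_{s+1}}x|\geq b_{n-1}/4$. The inductive hypothesis provides block norms $\geq\lambda_{n-1}^{q_n}$ (resp.\ $\geq\lambda_{n-1}^{q_{n-1}}$ at the exceptional block), while $(3.1)_{n-1}$--$(3.2)_{n-1}$ give that the angle between consecutive blocks equals $\phi_0(T^{j_t}x)$ for regular $t$ and is at least $e^{-(-\log(b_{n-1}/20))^\sigma}$ at the exceptional index. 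Applying Lemma \ref{lemma.basic} iteratively along the full decomposition yields
\begin{equation*}
\|A_{n-1}^{q_{n+1}}(x)\|\geq \lambda_{n-1}^{q_{n+1}}\cdot\prod_{1\leq t\leq p-1}\tfrac12|\phi_0(T^{j_t}x)|.
\end{equation*}

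The main obstacle is to show that the product on the right contributes a defect in $\log\lambda$ of at most $10\,q_{n-1}^{-(1-\delta^{1/2})}$ per index $n$, as required by the definition of $\lambda_n$; this is precisely where the hypothesis $\sigma\delta^{1/2}<1$ is used. By Proposition \ref{lemma.yaccoz}, the points $T^{j_t}x$ form an almost arithmetic progression of step $|z_n|\asymp q_{n+1}^{-1}$ near the critical point, so
\begin{equation*}
-\log\prod_{1\leq t\leq p-1}\tfrac12|\phi_0(T^{j_t}x)|\lesssim \sum_{k=1}^{q_{n+1}/q_n}\left(\log\frac{q_{n+1}}{k}\right)^{\sigma}\lesssim \frac{q_{n+1}}{q_n}(\log q_{n+1})^\sigma.
\end{equation*}
Brjuno gives $(\log q_{n+1})^\sigma\lesssim q_n^{\sigma\delta}$, so the total defect is $\lesssim q_{n+1}\,q_n^{\sigma\delta-1}$; dividing by $q_{n+1}$, the per-iteration loss in $\log\lambda$ is $\lesssim q_n^{\sigma\delta-1}\leq q_{n-1}^{\sigma\delta-1}\leq q_{n-1}^{\delta^{1/2}-1}$, where the last inequality uses $\sigma\delta^{1/2}\leq 1$; this absorbs comfortably into the required $10\,q_{n-1}^{-(1-\delta^{1/2})}$. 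The same chain of estimates simultaneously verifies the non-resonance hypothesis $\lambda_{n-1}^{q_n}\gg e^{(\log q_{n+1})^\sigma}$ needed at every application of Lemma \ref{lemma.basic}. Finally, the bump $e_n$ extending $(s_{n-1}-u_{n-1})-(\overline s_n-\overline u_n)$ from $I_n/10$ to $I_n$ is built by Cramer's rule as in Section 5, and the $C^k$ bounds on $e_n$ for $1\leq k\leq \log n$ follow from the very rapid decay of the interpolated base values (bounded by $\lambda_n^{-r_{n-1}/100}$) combined with the smoothness of $\phi_0$; the slow growth of the target range $k\leq\log n$ ensures that each derivative bound is consumed by only a negligible portion of the hyperbolicity budget, closing $(1)_n$--$(3)_n$.
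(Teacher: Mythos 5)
Your overall plan — induct exactly as in the $C^l$ case, verifying non-resonance via the Brjuno bound $\log q_{n+1}\lesssim q_n^\delta$ together with $\sigma\delta^{1/2}<1$, and then perturbing by a compactly supported correction to enforce $s_n-u_n=\phi_0$ on $I_n/10$ — is essentially the paper's plan, and your computation of the per-iteration loss in $\log\lambda$ (absorbing the product of angles into the $q_{n-1}^{-(1-\delta^{1/2})}$ budget) is correct in spirit. However, there is a genuine gap in how you propagate the derivative estimates, and it is precisely the technical difficulty that the paper flags.

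You treat the $C^k$ bounds for $1\le k\le \log n$ as if they could be closed by the same argument as in the finite-$l$ case (you even invoke ``Cramer's rule as in Section 5''). This does not work in $C^\infty$. The paper's inductive hypothesis is strictly stronger than the proposition itself: it tracks bounds of the form $|\partial^k\phi_i|\le 4^{k\log q_{i+1}}C^{k^2}k!\,\varphi_{r_i}^k$ (and analogously for $\partial^k\|A_i^{r_i^\pm}\|$) for \emph{all} $k\in\mathbb Z_+$, where $\varphi_{r_i}^k=e^{\sigma k^{\sigma/(\sigma-1)}}\,e^{2k\sum_{j\le i}(\log q_j)^\sigma q_j/q_{j-1}}$, and this all-order bookkeeping is what allows Lemma \ref{lemma.hderivative1} to be applied iteratively. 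If you only carry the bound up to $k\le\log n$, you cannot close the induction: the derivatives of the norms and angles of the products along the $q_{n+1}$-block involve Fa\`a di Bruno / Leibniz expansions of the data from stage $n-1$, and the coefficients $\varphi_{r_n}^k$ accumulate contributions from every earlier stage. The paper explicitly notes that Lemma \ref{lemma.hderivative} (the tool you are implicitly using) ``is not sufficient'' here, and replaces it by Lemma \ref{lemma.hderivative1} together with the control $\sup|\phi_0^{(n)}|\le e^{Cn^{\sigma/(\sigma-1)}}$ of Lemma \ref{lemma.hphi0}. Your sentence ``the slow growth of the target range $k\leq\log n$ ensures that each derivative bound is consumed by only a negligible portion of the hyperbolicity budget'' is a conclusion of a nontrivial estimate, not a justification for it.

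Relatedly, the correction $\hat e_n$ is not built by a Cramer-rule two-sided Hermite interpolation (that only controls finitely many derivatives): in the $C^\infty$ regime the paper sets $\hat e_n=e_n\cdot f_n$ with $f_n$ the Gevrey bump from Lemma \ref{lemma.2.3} (with $\nu=2$), and the quantitative bound $\sup|\partial^j f_n|\le (Cq_{n+1})^j(j!)^2/(1+j^2)$ is then fed into the $\varphi_{r_n}^k$ bookkeeping. Replacing this by a finite-order interpolant would leave the higher derivatives of $\phi_n$ uncontrolled and the inductive hypothesis unverifiable at the next step. So the norm/hyperbolicity half of your argument is sound, but the derivative half as written would not close.
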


\begin{prop}\label{lemma.Iupperbound}
There exists $\lambda_0=\lambda_0(\alpha,\phi_0)$ such that for $\lambda>\lambda_0$ the following holds.
There exist functions $\widetilde\phi_n(x)$ on $\mathbb S^1$ ($n=N,N+1,\cdots$) such that
\begin{description}
  \item[$(1')_n$] The function $\widetilde\phi_{n}$ converges to $\phi_n$ in $C^\infty$ topology, i.e. for $k$ satisfying $1\leq k\leq \log n$,  $$\big\|\phi_n(x)-\widetilde\phi_{n}(x)\big\|_{C^k}\leq q_{n+1}^{-2}.$$
  \item[$(2')_n$] The sequence $\{\widetilde A_n(x), \widetilde A_n(Tx),\cdots,\widetilde A_n(T^{r_n^+-1}x)\}$ is $\lambda_n$-hyperbolic for $x\in I_n$, where $\widetilde A_n(x):=\Lambda R_{\frac\pi2-\widetilde \phi_n(x)}$.
  \item[$(3')_n$] Let $\widetilde s_n(x)={s(\widetilde A_n^{r_n^+}(x))}$ and $\widetilde u_n(x)={s(\widetilde A_n^{-r_n^-}(x))}$. Then it holds that
      $$\widetilde s_n(x)=\widetilde u_n(x),\quad x\in \frac{I_n}{10}.$$
\end{description}
\end{prop}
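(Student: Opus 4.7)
The plan is to mimic the proof of Proposition \ref{prop.upperbound} in the new $C^\infty$ setting, replacing the polynomial sample function $|x|^{l+1}$ by the much flatter $\phi_0(x) \approx e^{-(-\log|x|)^\sigma}$ near $x=0$. The decisive ingredients supplied by Proposition \ref{lemma.Ilowerbound} are: $A_n$ agrees with $A_{n-1}$ off $I_n$, the block $\{A_n(T^j x)\}_{0 \le j < r_n^+}$ is $\lambda_n$-hyperbolic on $I_n$, and $(s_n-u_n)(x) = \phi_0(x)$ on $\tfrac{I_n}{10}$.

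First I would define, exactly as in the $C^l$ case,
$$
\widetilde e_n(x) = \begin{cases} (s_n - u_n)(x), & x \in \tfrac{I_n}{10}, \\ \widetilde h_n^{\pm}(x), & x \in I_n \setminus \tfrac{I_n}{10}, \\ 0, & x \in \mathbb S^1 \setminus I_n, \end{cases}
$$
where the $C^\infty$ bumps $\widetilde h_n^{\pm}$ match all derivatives of $(s_n-u_n)$ at the inner endpoints $\pm b_n/10$ and vanish to infinite order at $\pm b_n$. Set $\widetilde \phi_n := \phi_n + \widetilde e_n$ and $\widetilde A_n := \Lambda R_{\pi/2 - \widetilde \phi_n}$. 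Since $\widetilde e_n$ vanishes outside $I_n$, the iterates of $\widetilde A_n$ prior to the first return to $I_n$ coincide with those of $A_n$, so $(2')_n$ follows at once from $(2)_n$. For $(3')_n$, the same rotation-factorization computation as in (\ref{ensnun}) yields $\widetilde s_n - \widetilde u_n = (s_n - u_n) - \widetilde e_n$, which is identically zero on $\tfrac{I_n}{10}$ by construction.

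The delicate part is $(1')_n$: one must verify $\|\widetilde e_n\|_{C^k} \le q_{n+1}^{-2}$ for every $1 \le k \le \log n$. On $\tfrac{I_n}{10}$, by $(3.1)_n$ this reduces to bounding $\|\phi_0\|_{C^k}$ on $[-b_n/10, b_n/10]$. A Fa\`a di Bruno computation on $\phi_0(x) = e^{-(-\log|x|)^\sigma}$ gives, uniformly for $|x| \ge b_n/10 \asymp q_{n+1}^{-1}$,
$$
|\partial^k \phi_0(x)| \le k!\, C^k\, \phi_0(x)\, |x|^{-k}\, \bigl(1+(-\log|x|)\bigr)^{k(\sigma-1)} \le k!\, C^k\, e^{-(\log q_{n+1})^\sigma}\, q_{n+1}^{k}\, (\log q_{n+1})^{k(\sigma-1)}.
$$
Because $\sigma > 1$ and $k \le \log n \le \log q_{n+1}$, the dominant factor $e^{-(\log q_{n+1})^\sigma}$ swallows the polynomial and sub-exponential corrections, producing a bound well below $q_{n+1}^{-2}$. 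Cram\'er's rule applied to the Hermite-type interpolation data at $\pm b_n/10$ and $\pm b_n$ transfers the same bound to $\widetilde h_n^{\pm}$, which gives $(1')_n$.

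The main technical obstacle is keeping the $C^k$ estimates effective simultaneously over the full range $k \le \log n$; this is precisely where the assumption $\sigma > 1$ and the choice $\sigma \delta^{1/2} < 1$ enter, together with the Brjuno bound $\log q_{n+1} \le \beta_\delta q_n^{\delta}$, to guarantee that $(\log q_{n+1})^\sigma$ dominates $k \log q_{n+1} + k(\sigma-1)\log\log q_{n+1} + \log k!$ uniformly in the admissible range of $k$. Once this estimate on $\widetilde e_n$ is in hand, the remaining steps are verbatim repetitions of the construction in Proposition \ref{prop.upperbound}.
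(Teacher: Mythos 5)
Your overall strategy, the verification of $(2')_n$ and $(3')_n$, and the decisive estimate for $(1')_n$ (the Fa\`a di Bruno bound $|\partial^k\phi_0(x)|\lesssim k!\,C^k q_{n+1}^k(\log q_{n+1})^{O(k)}e^{-(\log q_{n+1})^\sigma}$ for $|x|\asymp q_{n+1}^{-1}$, with the factor $e^{-(\log q_{n+1})^\sigma}$ absorbing everything else for $k\le\log n$ since $\sigma>1$) all coincide with the paper's proof in Section 8.2. The one place where your write-up does not go through as stated is the construction of the transition functions $\widetilde h_n^{\pm}$: you ask them to match \emph{all} derivatives of $s_n-u_n$ at $\pm b_n/10$ and to vanish to infinite order at $\pm b_n$, and then invoke Cram\'er's rule to bound them. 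Cram\'er's rule is a finite-dimensional tool — it is exactly what the paper uses in the $C^l$ case, where only $l+1$ derivative conditions are imposed at each endpoint — but it cannot produce quantitative bounds for infinitely many interpolation conditions; and if you instead match only the first $[\log n]$ derivatives, the resulting $\widetilde\phi_n$ fails to be $C^\infty$ at the junction points, which is needed since $\widetilde A_n$ must converge to $D_\infty$ in the $C^\infty$ topology. The paper sidesteps this by setting $\widetilde e_n=-(s_n-u_n)\cdot f_n$ with $f_n$ the Gevrey bump function of Lemma \ref{lemma.2.3} (with $\nu=2$), so that smoothness and infinite-order flatness at $\partial I_n$ are automatic and the $C^k$ bounds follow from the Leibniz rule together with $\sup_{I_n}|\partial^jf_n|\le (Cq_{n+1})^j(j!)^2/(1+j^2)$. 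Replacing your Hermite interpolation by this cutoff multiplication repairs the argument with no other changes; note also that your estimate in fact does not need the Brjuno condition, only $\sigma>1$ and $\log q_{n+1}\gtrsim n$.
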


\begin{remark}
The proofs of Proposition \ref{lemma.Ilowerbound} and \ref{lemma.Iupperbound} are given in Section 8.
\end{remark}

Once the above two propositions established, we could adjust the parameters in Section 4 to prove the discontinuity of LE for $\{\widetilde A_n\}$.
Define $ B_n$ by \beqs
B_{n}=\bigcup_{l=1}^{[q_{n+1}^{3/2}]}\left(B(0,q_{n+1}^{-2})-l\alpha\right),\quad n\geq N.
\eeqs
By (\ref{choiceN3}), it is straightforward to compute that for $N$ large enough,
$$ \text{meas}\bigg(\bigcup_{n\geq N}B_{n}\bigg)\leq \sum_{n>N} q_{n+1}^{-\frac{1}2} \leq\frac\varepsilon4.$$

Let $x\notin \bigcup_{n\geq N}B_{n}$, $m\geq n$ and $i:=[q_{n+1}^{\frac 32}]$.
Consider the trajectory $\{T^jx:0\leq j\leq i\}$, let $i_0=i_0(x)\geq 0$ be the first returning time to $I_{n}$, and $i_1=i_1(x)\geq 0$ be the last returning time to $I_{n}$.
By Lemma \ref{liouville}, we have that $$i_0,i-i_1\leq q_{n+1}\ll i.$$
By the definition of $B_n$, we know that the trajectory $\{T^jx:0\leq j\leq i\}$ has not entered $B(0,q_{n+1}^{-2})$, but has entered $I_{n}$.
Let $i_0=j_0<j_1<\cdots<j_p=i_1$ be the returning times of $x\in I_{n-1}$.
Now consider the decomposition
$$A_m^i(x)=A_m^{i-i_1}(T^{i_1}x)\cdot A_m^{i_1-i_0}(T^{i_0}x) \cdot A_m^{i_0}(x),$$
\beqs A_m^{i_1-i_0}(T^{i_0}x)=A_m^{j_{p}-j_{p-1}}(T^{j_{p-1}}x)\cdots A_m^{j_{2}-j_{1}}(T^{j_{1}}x)A_m^{j_{1}-j_{0}}(T^{j_{0}}x).\eeqs
As before, we are going to repeatedly apply Lemma \ref{lemma.basic} to obtain the lower bound of the norms. This requires us to verify that the angles between matrices are larger than the errors of size $\|A_m^{j_{l+1}-j_l}(T^{j_{l}}x)\|^{-1}$.
Note that from Lemma \ref{lemma.returntimeIn}, either
$$j_{l+1}-j_l= q_n,\quad |T^{j_l}x|\geq q_{n+1}^{-2},\quad \text{or}$$
$$ j_{l+1}-j_l=q_{n-1},\quad |T^{j_l}x|\geq \frac{b_{n-1}}4,\quad |T^{j_{l+1}}x|\geq \frac{b_{n-1}}4$$
hold.
By $(3)_n$ in Proposition \ref{lemma.Ilowerbound} and definition of $\phi_0$,
$$
|s_n(T^{j_l}x)-u_n(T^{j_l}x)|>\frac12|\phi_0(T^{j_l}x)|\geq \left\{
                                   \begin{array}{ll}
                                                                   e^{-(2\log q_{n+1})^\sigma}, & \hbox{if $j_{l+1}-j_{l}=q_{n}$;} \\
                                                                   e^{-(\log(8 q_{n}))^\sigma}, & \hbox{if $j_{l+1}-j_{l}=q_{n-1}$.}
                                                                 \end{array}
                                                               \right.
$$
To verify the condition in Lemma \ref{lemma.basic}, we require
\beqs e^{-(2\log q_{n+1})^\sigma}\geq \lambda^{-\eta q_{n}}\gg\lambda^{-q_n},\quad e^{-(\log(8 q_{n}))^\sigma}\geq  \lambda^{-\eta q_{n-1}}\gg\lambda^{-q_{n-1}},\quad \eta=q_{n-1}^{-(1-\delta^{\frac12})},\eeqs
i.e.
\beq\label{condition1}
\frac{(2\log q_{n+1})^\sigma}{q_n\cdot q_{n-1}^{-(1-\delta^{\frac12})}}\leq \log \lambda,\quad
\frac{(\log 8q_{n})^\sigma}{q_{n-1}^{\delta^{\frac12}}}\leq \log \lambda.\eeq
Recall that the frequency $\alpha$ is a Brjuno number,
which means that for all large $n$, $\log q_{n+1}\leq \beta_\delta\cdot q_n^\delta$.
Since $\delta^\frac12\cdot \sigma<1$, we have $\frac{(2\log q_{n+1})^\sigma}{q_n^{\delta^\frac12}}\leq (2\beta_\delta)^\sigma$ for all large $n$.
Thus if $N$ is sufficiently large, $n\geq N$ and
$$\lambda \geq e^{(2\beta_\delta)^\sigma},$$
then (\ref{condition1}) is satisfied and hence Lemma \ref{lemma.basic} is applicable. By the same argument in Section 4.1, we could obtain
\beqs
\frac1i\log \|A_m^i(x)\|\geq (1-\frac\varepsilon4)
\log\lambda, \quad i=[q_{n+1}^{\frac 32}],\quad x\notin B_{n},\quad m\geq n.\eeqs
Finally, we could conclude that $\|A_n-D_\infty\|_{C^k}\rightarrow 0$ as $( e^k \leq ) \ n\rightarrow \infty$ and
\beq\label{Ilowerbound}L(D_\infty)\geq (1-\varepsilon)\log\lambda.\eeq

Once Proposition \ref{lemma.Iupperbound} is established, we could follow the same argument in Section 4.2 to prove that $\|\widetilde A_n-D_\infty\|_{C^k}\rightarrow 0$ as $( e^k \leq ) \ n\rightarrow \infty$ and
\beq\label{Iupperbound}L(\widetilde A_n)\leq (1-\delta)\log \lambda,\quad \delta=\frac1{300}.\eeq
Combining (\ref{Ilowerbound}) and (\ref{Iupperbound}), we have proved that the LE is discontinuous at $D_\infty$ in $C^\infty$ topology.

\section{Proof of Theorem \ref{thm.main5} and \ref{thm.main3}: Discontinuity of LE in $G^s$ topology}
In this section, we prove discontinuity of LE in $G^s$ topology with Diophantine frequency and strong Diophantine frequency.
Compared with \cite{GWYZ}, our proofs are more technical and sharp due to the weaker condition on frequency.

For any smooth function $f$ defined on $\mathbb S^1$, let
\beqs
\|f\|_{s,K}:=\frac{4\pi^2}{3} \sup_{k\in\mathbb N}\frac{(1+k)^2}{K^k(k!)^s}\|\partial^kf\|_{C^0},\quad s>1,\quad K>0,
\eeqs
\beqs
G^{s,K}(\mathbb S^1)=\Big\{f\in C^{\infty}(\mathbb S^1,\mathbb R): \|f\|_{s,K}<\infty\Big\},\quad G^s(\mathbb S^1)=\bigcup_{K>0}G^{s,K}(\mathbb S^1).
\eeqs
It is clear that $G^{s,K}(\mathbb S^1)$ is a Banach space. Moreover, $G^s(\mathbb S^1)$ is equipped with the usual inductive limit topology, i.e. $f_n\rightarrow f$ in $G^s(\mathbb S^1)$ topology if and only if $\|f_n-f\|_{s,K}\rightarrow 0$ as $n\rightarrow \infty$ for some $K>0$.
\subsection{Diophantine frequency and $G^s$ ($s>\tau+1$) topology}
We are going to prove Theorem \ref{thm.main3}.
We first give the following settings.
\begin{itemize}
  \item The frequency: Let $\alpha\in \text{DC}(\gamma, \tau)$ with $\gamma>0$ and $\tau>1$, i.e.
        \beqs q_{n+1}\leq \gamma^{-1} q_n^\tau,\quad  \forall n\in \mathbb N.\eeqs
        We also assume that $\alpha$ is not a bounded type number.
  \item Critical interval $I_n$ and first returning time $r_{n}^\pm$ are defined as in Section 4.
  \item The sample function: Let $s>\tau+1$. The $2\pi$-periodic smooth function $\phi_0\in G^{s}(\mathbb S^1)$ is defined as for $x\in [0,2\pi)$,
     $$\phi_0(x)=\exp\left(-x^{\frac{-1}{s-1}} -(2\pi-x)^{\frac{-1}{s-1}}\right).$$
\end{itemize}
Fix small $\varepsilon>0$. Let $N$ and $\lambda$ be sufficiently large such that
\beq\label{choiceN}\sum_{n\geq N}q_n^{\frac{\tau}{s-1}-1}<\frac{\varepsilon}{10^4}, \quad \sum_{n\geq N}q_n^{\frac12(1-\frac{s-1}{\tau})}<\frac{\varepsilon}{10^4},\eeq
\beq\label{choicelambda}\lambda>\max\Big\{\exp\Big(\frac{10^4}{\varepsilon}\gamma^{-\tau^{-1}}\Big) ,e^{q_N^{q_N}}\Big\}.\eeq
Moreover, we inductively define $\lambda_n$ and $\widetilde \lambda_n$ by
$$\lambda_{N-1}=\widetilde\lambda_{N-1}:=\lambda,$$
\beqs
\log\lambda_n:=\log\lambda_{n-1}-2\cdot10^3q_{n-1}^{\frac{\tau}{s-1}-1}, \quad\log\widetilde\lambda_n:=\log\lambda_{n-1}+2\cdot10^3q_{n-1}^{\frac{\tau}{s-1}-1}.
\eeqs
Since $\frac{\tau}{s-1}-1<0$, as $n\rightarrow \infty$ the sequences $\lambda_n$ and $\widetilde\lambda_n$ decrease respectively to some $\lambda_\infty$ and $\widetilde\lambda_\infty$, which satisfy $\lambda^{1-\frac\varepsilon{10}}<\lambda_\infty<\widetilde\lambda_\infty<\lambda^{1+\frac\varepsilon{10}}$.
\begin{prop}\label{lemma.Glowerbound}
There exist functions $\phi_n(x)$ on $\mathbb S^1$ ($n=N,N+1,\cdots$) such that
\begin{description}
  \item[$(1)_n$] The function $\phi_n(x)$ is exactly the same as $\phi_{n-1}(x)$ for $x\notin I_n$, and moreover it holds that for some $K>0$, $$\big\|\phi_n(x)-\phi_{n-1}(x)\big\|_{s,K}\leq \lambda_n^{-\frac1{100}q_{n-1}},\quad n>N.$$
  \item[$(2)_n$] The sequence $\{A_n(x), A_n(Tx),\cdots,A_n(T^{r_n^+-1}x)\}$ is $\lambda_n$-hyperbolic for $x\in I_n$, where $A_n(x):=\Lambda R_{\frac\pi2-\phi_n(x)}$. Moreover, $$\Big\| \|A_n^{r_n^\pm}(x)\|\Big\|_{s,K}\leq \widetilde \lambda_n^{r_n^\pm}, \quad \Big\| \|A_n^{r_n^\pm}(x)\|^{-1}\Big\|_{s,K}\leq \lambda_n^{-r_n^\pm}.$$
  \item[$(3)_n$] Let $s_n(x)={s(A_n^{r_n^+}(x))}$ and $u_n(x)={s(A_n^{-r_n^-}(x))}$. Then it holds that
\begin{description}
  \item[$(3.1)_n$] $s_n(x)-u_n(x)=\phi_0(x)$, for $x\in \frac{I_n}{10}$;
  \item[$(3.2)_n$] $|s_n(x)-u_n(x)|\geq \frac12 |\phi_0(x)|\geq e^{-(b_n/20)^{\frac{-1}{s-1}}}$, for $x\in I_n\backslash \frac{I_n}{10}$.
\end{description}
\end{description}
\end{prop}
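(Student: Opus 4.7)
The plan is to mimic the inductive scheme used for Proposition \ref{prop.lowerbound} in the $C^l$ setting, replacing the polynomial-type degeneracy $|x|^{l+1}$ by the Gevrey-type degeneracy $\phi_0(x)=\exp(-x^{-1/(s-1)}-(2\pi-x)^{-1/(s-1)})$, and replacing Lemma \ref{lemma.hderivative} by its natural Gevrey counterpart. The two inputs that must be combined with care are: (i) the Diophantine growth $q_{n+1}\le \gamma^{-1}q_n^\tau$, which, since $s>\tau+1$, forces $q_{n+1}^{1/(s-1)}=o(q_n)$; and (ii) the sharp distribution of returning times given by Lemma \ref{lemma.returntimeIn} and Corollary \ref{cor.sharp-est-rt}. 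Together, these ensure that wherever Lemma \ref{lemma.basic} must be applied along a segment, the available angle $|\phi_0(T^jx)|\gtrsim \exp(-q_{n+1}^{1/(s-1)})$ is comfortably larger than the reciprocal norm $\lambda^{-q_n}$, so the nonresonant regime is never violated.

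First, for the base step $n=N$, set $A(x)=\Lambda R_{\pi/2-\phi_0(x)}$. For $x\in I_N$ and $1\le j<r_N^+$ one has $T^jx\notin I_N$, hence $|\phi_0(T^jx)|\ge e^{-b_N^{-1/(s-1)}}$, which by (\ref{choicelambda}) is enormously bigger than $\lambda^{-1}$. Iterating Lemma \ref{lemma.basic} along the orbit of length $r_N^+\in\{q_N,q_{N+1}\}$ yields $\lambda_N$-hyperbolicity and (via the Gevrey version of Lemma \ref{lemma.hderivative}) bounds of the form $\|\,\|A^j\|\,\|_{s,K}\le \widetilde\lambda_N^{\,j}$ and $\|\,\|A^j\|^{-1}\|_{s,K}\le \lambda_N^{-j}$ for some $K>0$, together with Gevrey control on $\overline s_N:=s(A^{r_N^+})$ and $\overline u_N:=u(A^{-r_N^-})$. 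I would then define the correction $e_N$ exactly as in Section 5: equal to $\phi_0-(\overline s_N-\overline u_N)$ on $\tfrac{I_N}{10}$, identically zero outside $I_N$, and interpolated on $I_N\setminus \tfrac{I_N}{10}$ by a Gevrey bump $h_N^\pm$ of class $G^{s,K}$ matching derivatives up to all orders. Since $\overline s_N-\overline u_N$ is very close to $\phi_0$ in $\|\cdot\|_{s,K}$, $e_N$ is tiny in the same norm, and setting $\phi_N=\phi_0+e_N$ gives $A_N$ satisfying $(1)_N$–$(3)_N$.

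For the inductive step, assume $(1)_{n-1}$–$(3)_{n-1}$. The key lemma is a Gevrey analogue of Lemma \ref{lemma.hyperbolic}: along a first-return orbit to $I_n$ from $x\in I_n$, decompose it into blocks of first-return orbits to $I_{n-1}$. By Corollary \ref{cor.sharp-est-rt}, all but at most one such sub-block has length $q_n$; the exceptional short block (length $q_{n-1}$) occurs in the middle and at both its endpoints the orbit lies in $I_{n-1}\setminus \tfrac{I_{n-1}}{10}$, so by $(3.2)_{n-1}$ the corresponding angles are bounded below by $\tfrac12|\phi_0|$. For every junction, the angle $|\phi_0(T^{j_t}x)|\ge \exp(-q_{n+1}^{1/(s-1)})$ must dominate the matrix-norm reciprocal $\lambda_{n-1}^{-q_n}$; the inequality $q_{n+1}^{1/(s-1)}\le \gamma^{-1/(s-1)}q_n^{\tau/(s-1)}\ll q_n\log\lambda$ furnished by $s>\tau+1$ and (\ref{choicelambda}) is exactly what makes Lemma \ref{lemma.basic} and its Gevrey version applicable. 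Accumulating across the up to $\approx q_{n+1}/q_n\le \gamma^{-1}q_n^{\tau-1}$ junctions and applying a Stirling-type estimate on $\prod_t \tfrac12|\phi_0(T^{j_t}x)|$, one obtains $\|A_{n-1}^{r_n^+}(x)\|\ge \lambda_{n-1}^{r_n^+}\exp(-C q_{n-1}^{\tau/(s-1)}\log\lambda)$, which is $\ge \lambda_n^{r_n^+}$ by the definition of $\lambda_n$. The Gevrey bounds on $\|A_{n-1}^{r_n^+}\|^{\pm 1}$ and on $\overline s_n-s_{n-1}$, $\overline u_n-u_{n-1}$ come from the Gevrey version of Lemma \ref{lemma.hderivative} applied inductively, giving errors of order $\lambda_n^{-q_{n-1}/3}$ in $\|\cdot\|_{s,K}$. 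One then defines $e_n$, $\phi_n$ exactly as in the base case (with a Gevrey cutoff $h_n^\pm$ on $I_n\setminus\tfrac{I_n}{10}$), verifies $(1)_n$, and because rotations preserve norms, $(2)_n$ and $(3)_n$ follow.

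The main obstacle will be controlling everything uniformly in a single Gevrey class $G^{s,K}$: the inductive limit topology requires that $K$ does not blow up with $n$, yet each step composes many matrices and uses a cutoff bump on an interval of size $b_n\to 0$. Concretely, one must exhibit Gevrey bump functions $h_n^\pm$ of class $G^{s,K_0}$ on $I_n\setminus\tfrac{I_n}{10}$ with derivatives bounded by $b_n^{-k}(k!)^s\cdot\lambda_n^{-q_{n-1}/3}$ (this is possible precisely when $s>1$ and $b_n^{-1}\lesssim q_{n+1}\lesssim q_n^\tau$, matching the Gevrey scale $s>\tau+1$), and one must propagate the Gevrey estimate through long matrix products in Lemma \ref{lemma.basic} without losing a factor that grows with $n$. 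This is where the sharp choice $I_n^0\sqcup I_n^*\sqcup\bigcup_i I_n^i$ from Section 3 becomes essential: it makes the returning times exactly $q_n$ or $q_{n+1}$, so the Stirling estimate absorbs into the small correction $2\cdot 10^3\, q_{n-1}^{\tau/(s-1)-1}$ summed in (\ref{choiceN}). Once this uniform-$K$ Gevrey bookkeeping is set up, the scheme closes by the same inductive loop as in Section 5.
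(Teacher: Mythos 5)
Your overall strategy is the same as the paper's: induct on $n$, use the sharp returning-time structure of Lemma~\ref{lemma.returntimeIn} and Corollary~\ref{cor.sharp-est-rt} to control the number and location of junctions, exploit the Diophantine inequality $q_{n+1}^{1/(s-1)}\ll q_n\log\lambda$ (valid since $s>\tau+1$) to keep the angles $|\phi_0(T^{j_t}x)|\gtrsim e^{-q_{n+1}^{1/(s-1)}}$ comfortably above the reciprocal norms, and absorb the accumulated Stirling loss into the small decrement $\lambda_{n-1}\to\lambda_n$. That much is right and matches the paper's plan. However, there are two places where your proposal stops short of an actual proof, and where the paper does something genuinely different.

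First, the cutoff. You propose interpolating $\overline s_n-\overline u_n$ across $I_n\setminus\frac{I_n}{10}$ by a Gevrey function $h_n^\pm$ that matches derivatives of all orders and simultaneously has $\|h_n^\pm\|_{s,K}\lesssim \lambda_n^{-q_{n-1}/3}$. This is not a free step: unlike the $C^l$ case (where Cramer's rule gives a polynomial interpolant with controlled finite-order derivatives), there is no off-the-shelf Gevrey interpolant matching infinitely many jet conditions with a norm that is both small and uniformly in a fixed $G^{s,K}$-class. The paper avoids this entirely by a multiplicative cutoff: it sets $\hat e_n=e_n\cdot f_n$ where $e_n:=\phi_0-(\overline s_n-\overline u_n)$ on $I_n$ (no interpolation at all) and $f_n$ is a fixed Gevrey bump from Lemma~\ref{lemma.2.3} with $\|f_n\|_{s,C}\le e^{(s-\nu)b_n^{-1/(s-\nu)}}$; then the Gevrey product rule (Lemma~\ref{lemma.2.1}(1)) and the Diophantine estimate $e^{(s-\nu)b_n^{-1/(s-\nu)}}\le\lambda_n^{q_n/40}$ (needing $\nu\in(1,s-\tau)$) give $\|\hat e_n\|\le\lambda_n^{-q_n/40}$. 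This multiplicative construction is the correct Gevrey replacement for the $h_n^\pm$ interpolation, and the cutoff exponent $\frac1{s-\nu}$ (not $\frac1{s-1}$) is what the product estimate actually requires.

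Second, the uniform Gevrey constant. You correctly flag that $K$ must not blow up, but you propose no mechanism and say only that ``once this bookkeeping is set up, the scheme closes.'' In the paper, this is handled by the revised GWYZ lemmas (Lemmas~\ref{lemma.4.1}--\ref{lemma.4.2}), each of whose applications costs a factor $(1+\eta)$ on $K$ with $\eta=\lambda^{-q_n/4000}$, and by the explicit bookkeeping $\eta_i=\prod_{j=N}^{i-1}(1+\lambda^{-q_j/8000})-1$, a bounded sequence since $\sum_j\lambda^{-q_j/8000}<\infty$. Without an argument of this type, there is no reason the resulting $\phi_n$ stay in a single $G^{s,K}$; that is the content, not just a technicality. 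Related to this, the paper also runs the lower-bound induction at an intermediate index $s_1\in(\tau+1,s)$ and uses $G^{s_1}\hookrightarrow G^{s_2}=G^s$ at the end (the constraint $s_2-\nu>s_1-1>0$ is needed for the companion upper-bound Proposition~\ref{lemma.Gupperbound}); your proposal does not address why this nesting is needed or how to choose $\nu$. In short: the architecture of your proof is the paper's, but the specific Gevrey machinery (multiplicative cutoff via Lemma~\ref{lemma.2.3}, the $\eta$-tracking in Lemmas~\ref{lemma.4.1}--\ref{lemma.4.2}, and the auxiliary exponent $\nu$) is missing, and those are precisely the places where the $C^l$ argument does not carry over verbatim.
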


\begin{prop}\label{lemma.Gupperbound}
There exists $\lambda_0=\lambda_0(\alpha,\phi_0)$ such that for $\lambda>\lambda_0$ the following holds.
There exist functions $\widetilde\phi_n(x)$ on $\mathbb S^1$ ($n=N,N+1,\cdots$) such that
\begin{description}
  \item[$(1')_n$] The function $\widetilde\phi_{n}$ converges to $\phi_n$ in $G^s$ topology, i.e. for some $K>0$, $$\big\|\phi_n(x)-\widetilde\phi_{n}(x)\big\|_{s,K}\leq q_{n+1}^{-2}.$$
  \item[$(2')_n$] The sequence $\{\widetilde A_n(x), \widetilde A_n(Tx),\cdots,\widetilde A_n(T^{r_n^+-1}x)\}$ is $\lambda_n$-hyperbolic for $x\in I_n$, where $\widetilde A_n(x):=\Lambda R_{\frac\pi2-\widetilde \phi_n(x)}$.
  \item[$(3')_n$] Let $\widetilde s_n(x)={s(\widetilde A_n^{r_n^+}(x))}$ and $\widetilde u_n(x)={s(\widetilde A_n^{-r_n^-}(x))}$. Then it holds that
      $$\widetilde s_n(x)=\widetilde u_n(x),\quad x\in \frac{I_n}{10}.$$
\end{description}
\end{prop}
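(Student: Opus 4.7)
The plan is to mirror the $C^l$ construction of Proposition~\ref{prop.upperbound}, replacing the $C^l$ bridging function by a $G^s$ one. Specifically, I set $\widetilde\phi_n(x) := \phi_n(x) + \widetilde e_n(x)$, where $\widetilde e_n$ is supported in $I_n$, coincides with $(s_n - u_n)(x)$ on $\frac{I_n}{10}$, and interpolates to zero on the annulus $I_n \setminus \frac{I_n}{10}$. The concrete model is $\widetilde e_n := (s_n - u_n) \cdot \chi_n$, where $\chi_n$ is a $G^s$ cutoff equal to $1$ on $\frac{I_n}{10}$ and vanishing outside $I_n$.

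First I would construct the Gevrey cutoff $\chi_n$ adapted to the scale $b_n \sim q_{n+1}^{-1}$, satisfying bounds of the form $\|\partial^k \chi_n\|_{C^0} \leq C^k (k!)^s b_n^{-k}$; such cutoffs are obtained by a convolution product of characteristic functions with suitable Gevrey mollifiers (as in the bump construction of \cite{GWYZ}). By Leibniz's rule together with $(3.1)_n$--$(3.2)_n$ of Proposition~\ref{lemma.Glowerbound}, the amplitude of $\widetilde e_n$ inherits from $\phi_0$ the bound $|\widetilde e_n(x)| \leq e^{-(b_n/20)^{-1/(s-1)}} \leq e^{-c \, q_{n+1}^{1/(s-1)}}$, and an analogous bound holds for its derivatives (which pick up factors of $b_n^{-k}$ but remain dominated by the doubly exponential decay of $\phi_0$). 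Choosing $K$ sufficiently large (depending only on the $K$ from Proposition~\ref{lemma.Glowerbound} and on the intrinsic Gevrey constants of $\phi_0$), this yields $\|\widetilde e_n\|_{s,K} \leq q_{n+1}^{-2}$, establishing $(1')_n$.

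For $(2')_n$ and $(3')_n$, observe that $\widetilde e_n$ vanishes outside $I_n$ while $T^j x \notin I_n$ for $1 \leq j \leq r_n^+(x) - 1$ when $x \in I_n$; hence the only factor modified in $\widetilde A_n^{r_n^+}(x) = \widetilde A_n(T^{r_n^+-1}x)\cdots \widetilde A_n(x)$ is the base-point factor, giving $\widetilde A_n^{r_n^+}(x) = A_n^{r_n^+}(x) \cdot R_{-\widetilde e_n(x)}$ and symmetrically $\widetilde A_n^{-r_n^-}(x) = R_{-\widetilde e_n(T^{-r_n^-}x)} \cdot A_n^{-r_n^-}(x)$. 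Since right and left multiplication by a rotation preserves norms of matrices, the $\lambda_n$-hyperbolicity of $\{A_n(x), \ldots, A_n(T^{r_n^+-1}x)\}$ from $(2)_n$ is transferred verbatim, giving $(2')_n$. A singular value decomposition identical to the one leading to equation (\ref{ensnun}) in the $C^l$ case yields $\widetilde s_n - \widetilde u_n = (s_n - u_n) - \widetilde e_n$, which vanishes on $\frac{I_n}{10}$ by the very definition of $\widetilde e_n$, proving $(3')_n$.

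The principal obstacle is the tension between the small scale $b_n \sim q_{n+1}^{-1}$ of the bridging annulus and the requirement of a uniform Gevrey constant $K$ across all $n$: a Gevrey cutoff at scale $b_n$ naturally produces $k$-th derivative growth $\sim (k!)^s b_n^{-k}$, which at fixed $K$ blows up as $n\to\infty$. The mechanism that saves us is the built-in double exponential decay $|\phi_0(x)| \leq e^{-|x|^{-1/(s-1)}}$ of the sample function, which, via Leibniz applied to $(s_n-u_n)\chi_n$, overwhelms any polynomial-in-$q_{n+1}$ loss from differentiating $\chi_n$; it is precisely the compatibility of this decay rate with the Diophantine exponent $\tau$ (through $s>\tau+1$, used earlier to establish hyperbolicity) that permits fixing $K$ once at stage $N$ and verifying $\|\widetilde e_n\|_{s,K} \leq q_{n+1}^{-2}$ uniformly. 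Turning this heuristic into a clean multi-index Leibniz estimate, and tracking cross-derivatives between $s_n-u_n$ (whose Gevrey control comes from $(3.1)_n$ and the Gevrey norms propagated through Proposition~\ref{lemma.Glowerbound}) and $\chi_n$, is the main technical content.
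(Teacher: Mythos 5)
Your overall plan (perturb $\phi_n$ by $(s_n-u_n)$ times a cutoff supported in $I_n$, then transfer $(2')_n$ and $(3')_n$ by rotation invariance) is the paper's plan, and the verifications of $(2')_n$ and $(3')_n$ are essentially verbatim what the paper does. The substance of the proposition, however, is the seminorm estimate in $(1')_n$, and here your scheme has a genuine gap: with a single Gevrey index $s$ the estimate cannot close. Concretely, a compactly supported Gevrey cutoff $\chi_n$ at scale $b_n$ must be built in some class $G^\nu$ with $\nu>1$ (the boundary Gevrey class $G^1$ admits no such bump), and Lemma \ref{lemma.2.3} then gives $\|\chi_n\|_{s,C}\lesssim e^{(s-\nu)\,b_n^{-1/(s-\nu)}}$. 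This is \emph{not} a ``polynomial-in-$q_{n+1}$ loss'' as you describe; it is a stretched exponential in $b_n^{-1}\sim q_{n+1}$, and enlarging the constant $K$ does not change the exponent. Meanwhile $\phi_0\in G^s$ has, by Lemma \ref{lemma.2.2}, restricted seminorm $\lesssim e^{-b_n^{-1/(s-1)}}$ on $I_n$. Since $\nu>1$ forces $s-\nu<s-1$, hence $\tfrac{1}{s-\nu}>\tfrac{1}{s-1}$, the growth of the cutoff strictly dominates the decay of $\phi_0$ as $b_n\to 0$, and the bound $\|\widetilde e_n\|_{s,K}\leq q_{n+1}^{-2}$ fails. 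Your appeal to the Diophantine exponent $\tau$ via $s>\tau+1$ is also a red herring here: that condition enters the lower-bound hyperbolicity estimate, not the local Gevrey estimate $(1')_n$.

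The paper resolves this with a three-index splitting $1<\nu<s_1<s_2$ subject to $s_2-\nu>s_1-1>0$, announced just before Subsection 9.2: Proposition \ref{lemma.Glowerbound} is run entirely in $G^{s_1}$ with the sample function $\phi_0$ taken to have decay exponent $-1/(s_1-1)$ (steeper than $-1/(s_2-1)$), the bump $f_n$ is built in $G^\nu$, and the convergence $(1')_n$ is then measured in the larger target class $G^{s_2}$. In $G^{s_2}$ the cutoff contributes $e^{(s_2-\nu)b_n^{-1/(s_2-\nu)}}$ while $\phi_0$ contributes $e^{-b_n^{-1/(s_1-1)}}$, and precisely because $s_2-\nu>s_1-1$ the exponent $\tfrac{1}{s_2-\nu}$ is now smaller than $\tfrac{1}{s_1-1}$, so the decay wins and the product seminorm is $\lesssim e^{-\frac12 b_n^{-1/(s_1-1)}}\leq q_{n+1}^{-2}$. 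Your proposal needs this decoupling of the Gevrey index of the constructed cocycle from the Gevrey index of the topology in which discontinuity is asserted; without it, step $(1')_n$ does not go through.
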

\begin{remark}
The proofs of Proposition \ref{lemma.Glowerbound} and \ref{lemma.Gupperbound} could be regarded as an improved version of Proposition 3.1 and 3.2 in \cite{GWYZ}. We give the sketches of their proofs in Section 9.
\end{remark}

Once the above two propositions established, we could adjust the parameters in Section 4 to prove the discontinuity of LE for $\{\widetilde A_n\}$.
Define $B_n$  by \beqs
B_{n}=\bigcup_{l=1}^{ [q_{n+1}^{1+a/2}]}\left(B(0,q_{n+1}^{-a})-l\alpha\right),\quad a=\frac{s-1}{\tau},\quad n\geq N.
\eeqs
Note $a>1$. By (\ref{choiceN}), it is straightforward to compute that for $N$ large enough,
$$ \text{meas}\bigg(\bigcup_{n\geq N} B_{n}\bigg)\leq \sum_{n>N} q_{n+1}^{-\frac{a-1}2} \leq\frac\varepsilon4.$$

Let $x\notin \bigcup_{n\geq N}B_{n}$, $m\geq n$ and $i:=[q_{n+1}^{1+\frac a2}]$.
Consider the trajectory $\{T^jx:0\leq j\leq i\}$, let $i_0=i_0(x)\geq 0$ be the first returning time to $ I_{n}$, and $i_1=i_1(x)\geq 0$ be the last returning time to $ I_{n}$.
By Lemma \ref{liouville}, we have that $$i_0,i-i_1\leq q_{n+1}\ll i.$$
By the definition of $B_n$, we know that the trajectory $\{T^jx:0\leq j\leq i\}$ has not entered $B(0,q_{n+1}^{-a})$, but has entered $I_{n}$.
Let $i_0=j_0<j_1<\cdots<j_p=i_1$ be the returning times to $I_{n-1}$.
Now consider the decomposition
$$A_m^i(x)=A_m^{i-i_1}(T^{i_1}x)\cdot A_m^{i_1-i_0}(T^{i_0}x) \cdot A_m^{i_0}(x),$$
\beqs A_m^{i_1-i_0}(T^{i_0}x)=A_m^{j_{p}-j_{p-1}}(T^{j_{p-1}}x)\cdots A_m^{j_{2}-j_{1}}(T^{j_{1}}x)A_m^{j_{1}-j_{0}}(T^{j_{0}}x).\eeqs
As before, we are going to repeatedly apply Lemma \ref{lemma.basic} to obtain the lower bound of the norms. This requires us to verify that the angles between matrices are larger than the errors of size $\|A_m^{j_{l+1}-j_l}(T^{j_{l}}x)\|^{-1}$.
Note that from Lemma \ref{lemma.returntimeIn}, either
$$j_{l+1}-j_l= q_n,\quad |T^{j_l}x|\geq q_{n+1}^{-a},\quad \text{or}$$
$$ j_{l+1}-j_l=q_{n-1},\quad |T^{j_l}x|\geq \frac{b_{n-1}}4,\quad |T^{j_{l+1}}x|\geq \frac{b_{n-1}}4$$
hold.
By $(3)_n$ in Proposition \ref{lemma.Glowerbound} and definition of $\phi_0$,
$$
|s_n(T^{j_l}x)-u_n(T^{j_l}x)|>\frac12|\phi_0(T^{j_l}x)|\geq \left\{
                                   \begin{array}{ll}
                                                                   e^{-10^2q_{n+1}^{\frac a{s-1}}}, & \hbox{if $j_{l+1}-j_{l}=q_{n}$;} \\
                                                                   e^{-10^2q_{n}^{\frac1{s-1}}}, & \hbox{if $j_{l+1}-j_{l}=q_{n-1}$.}
                                                                 \end{array}
                                                               \right.
$$
Hence we require
\beqs e^{-10^2q_{n+1}^{\frac a{s-1}}}\gtrsim \lambda^{-\frac\varepsilon{10}q_{n}}\gg\lambda^{-q_n},\quad e^{-10^2q_{n}^{\frac1{s-1}}}\gtrsim  \lambda^{-\frac\varepsilon{10}q_{n-1}}\gg\lambda^{-q_{n-1}},\eeqs
i.e.
\beq\label{condition}q_{n+1}\leq \Big(\frac{\varepsilon\log \lambda}{10^3}\Big)^{\frac{s-1}a} q_n^{\frac{s-1}a},\quad q_{n}\leq \Big(\frac{\varepsilon\log \lambda}{10^3}\Big)^{s-1} q_{n-1}^{s-1}.\eeq
Recall the frequency $\alpha$ is a Diophantine number, $q_{n+1}\leq \gamma^{-1} q_n^{\tau}$ for all $n\in \mathbb Z_+$.
If $$\lambda \geq \exp\Big(\frac{10^3}{\varepsilon}\gamma^{\frac{-a}{s-1}}\Big),\quad s> 1+\tau,$$ then (\ref{condition}) is satisfied and hence Lemma \ref{lemma.basic} is applicable. By the same argument in Section 4.1, we could obtain
\beqs
\frac1i\log \|A_m^i(x)\|\geq (1-\frac\varepsilon4)
\log\lambda, \quad i=[q_{n+1}^{1+\frac a2}],\quad x\notin B_{n},\quad m\geq n.\eeqs
Finally, we could conclude that there exists a $D_s\in G^s(\mathbb S^1,\mathrm{SL}(2,\mathbb R))$ such that $\|A_n-D_s\|_{s,K}\rightarrow 0$ and
\beq\label{Glowerbound}L(D_s)\geq (1-\varepsilon)\log\lambda.\eeq

Once Proposition \ref{lemma.Gupperbound} is established, we could follow exactly the same process in Section 4.2 to prove that $\|\widetilde A_n-D_s\|_{s,K}\rightarrow 0$ and
\beq\label{Gupperbound}L(\widetilde A_n)\leq (1-\delta)\log \lambda,\quad \delta=\frac1{300}.\eeq
Combining (\ref{Glowerbound}) and (\ref{Gupperbound}), we have proved that the LE is discontinuous at $D_s$ in $G^s$ topology.

\subsection{Strong Diophantine frequency and $G^s$ ($s>2$) topology}
The continuous part of Theorem \ref{thm.main5} has been proved by Theorem 6.3 in \cite{CGYZ}. Here we only give the proof of the discontinuous part, which follows the same process as in Subsection 7.1.
Assume the frequency $\alpha\in \mathrm{SDC}(\gamma, \tau)$ with $\gamma>0$ and $\tau>0$, i.e.
\beqs q_{n+1}\leq \gamma^{-1} q_n (\log q_n)^\tau,\quad  \forall n\in \mathbb N.\eeqs
We also assume that $\alpha$ is not a bounded type number.
Moreover, we define the critical set, critical interval, first returning time and sample function ($s>2$) as the previous subsection.

To determine the largeness of $N$ and $\lambda$, we revise (\ref{choiceN}) and (\ref{choicelambda}) as
\beq\label{choiceN2}
\sum_{n\geq N}q_n^{\frac{2-s}{2(s-1)}}<\frac{\varepsilon}{10^4}, \quad \sum_{n\geq N}q_n^{\frac{2-s}{2}}<\frac{\varepsilon}{10^4}, \quad \gamma^{-1} q_n (\log q_n)^\tau <q_n^{\frac s2},\quad \forall n\geq N,
\eeq
\beqs
\lambda>\max\Big\{\exp\big(10^3\varepsilon^{-1},e^{q_N^{q_N}}\big)\Big\}.
\eeqs
And we redefine $\lambda_n$ and $\widetilde \lambda_n$ by
$$\log\lambda_n:=\log\lambda_{n-1}-2\cdot10^3q_{n-1}^{\frac{2-s}{2(s-1)}}, \quad\log\widetilde\lambda_n:=\log\lambda_{n-1}+2\cdot10^3q_{n-1}^{\frac{2-s}{2(s-1)}}.$$
With these settings, we could prove Proposition \ref{lemma.Glowerbound} and \ref{lemma.Gupperbound} for strong Diophantine frequency and $s>2$.
Now we redefine
\beqs
B_{n}=\bigcup_{l=1}^{ [q_{n+1}^{1+a/2}]}\left(B(0,q_{n+1}^{-a})-l\alpha\right),\quad a=\frac{2(s-1)}{s},\quad n\geq N.
\eeqs
Let $i:=[q_{n+1}^{1+\frac a2}]$.
For $x\notin \bigcup_{n\geq N}B_{n}$ and $m\geq n$, we consider
$$A_m^i(x)=A_m^{i-i_1}(T^{i_1}x)\cdot A_m^{i_1-i_0}(T^{i_0}x) \cdot A_m^{i_0}(x),$$
\beqs A_m^{i_1-i_0}(T^{i_0}x)=A_m^{j_{p}-j_{p-1}}(T^{j_{p-1}}x)\cdots A_m^{j_{2}-j_{1}}(T^{j_{1}}x)A_m^{j_{1}-j_{0}}(T^{j_{0}}x).\eeqs
As before, we will apply Lemma \ref{lemma.basic} to obtain the lower bound of $\|A_m^{i_1-i_0}(T^{i_0}x)\|$ and $\|A_m^i(x)\|$.
To apply Lemma \ref{lemma.basic}, we should verify the condition that the angles are larger than the reciprocal of the norms, which requires
$$q_{n+1}\leq \Big(\frac{\varepsilon\log \lambda}{10^3}\Big)^{\frac{s}2} q_n^{\frac{s}2},\quad q_{n}\leq \Big(\frac{\varepsilon\log \lambda}{10^3}\Big)^{s-1} q_{n-1}^{s-1}.$$
instead of (\ref{condition}). This is satisfied by strong Diophantine condition and (\ref{choiceN2}). Finally, we could conclude that there exists a $D_s\in G^s(\mathbb S^1,\mathrm{SL}(2,\mathbb R))$ such that $\|A_n-D_s\|_{s,K}\rightarrow 0$ and
\beqs L(D_s)\geq (1-\varepsilon)\log\lambda.\eeqs
On the other hand, by Proposition \ref{lemma.Gupperbound}, we could prove that $\|\widetilde A_n-D_s\|_{s,K}\rightarrow 0$ and
\beqs L(\widetilde A_n)\leq (1-\delta)\log \lambda,\quad \delta=\frac1{300}.\eeqs
Thus we have proved that the LE is discontinuous at $D_s$ in $G^s$ topology.

\section{Proofs of Proposition \ref{lemma.Ilowerbound} and \ref{lemma.Iupperbound}.}
\subsection{Proof of Proposition \ref{lemma.Ilowerbound}.}
We prove the proposition by induction. For sufficiently large $\lambda$, the construction for $n=N$ holds (where we require $\lambda\geq e^{q_N^{q_N}}$), see \cite{WY13} for more details. In the following, we only consider $n\geq N+1$.

Inductively, we assume that $\phi_N$, $\cdots$, $\phi_{n-1}$ have been constructed such that Proposition \ref{lemma.Ilowerbound} holds for $N\leq i\leq n-1$. Moreover, we further assume the following properties.

\noindent
$(1)_i$. For $0<k\leq \log i$,
$$\|\phi_i(x)-\phi_{i-1}\|_{C^k}\leq \lambda_i^{-\frac{r_{i-1}}{100}}.$$
Moreover, for $k\in \mathbb Z_+$,
$$|\partial^k\phi_i(x)|\leq 4^{k\log q_{i+1}}\cdot C^{k^2}\cdot k!\cdot \varphi_{r_i}^k,$$
$$\varphi_{r_i}^k=e^{\sigma k^{\frac{\sigma}{\sigma-1}}}\cdot e^{2k\cdot \big((\log q_i)^{\sigma}\cdot \frac{q_i}{q_{i-1}}+\cdots+(\log q_{N+1})^{\sigma}\cdot \frac{q_{N+1}}{q_{N}}\big)}.$$

\noindent
$(2)_i$. For each $x\in I_i$, $A_i(x),A_i(Tx),\cdots, A_i(T^{r_i^+-1}x)$ is $\lambda_i$-hyperbolic. Moreover, for $x\in I_{i}$ and $k\in \mathbb Z_+$,
$$\Big|\partial^k \|A_i^{r_i^\pm}(x)\|\Big|\leq \|A_i^{r_i^\pm}(x)\|\cdot 4^{k\log q_{i+1}}\cdot C^{k^2}\cdot k!\cdot \varphi_{r_i}^k.$$

\noindent
$(3)_i$. We have
$$s_i(x)-u_i(x)=\phi_0(x),\quad x\in \frac{I_i}{10},$$
$$|s_i(x)-u_i(x)|\geq \frac12|\phi_0(x)|\geq e^{-(-\log (b_i/20))^{\sigma}},\quad x\in I_i\backslash \frac{I_i}{10}.$$

\noindent In this subsection, we will prove the above properties for $i=n$.

\medskip

Firstly, we have a similar lemma as Lemma \ref{lemma.hyperbolic}.
\begin{lemma}\label{lemma.Ihyperbolic}
Let $x_0=x,x_1,\cdots, x_m$ be a $T$-orbit with $x_0,x_m\in I_n$ and $x_i\notin I_n$ for $0<i<m$. Then $\{A_{n-1}(x_0),\cdots A_{n-1}(x_m)\}$ is $\lambda_n$-hyperbolic.
Moreover, letting $\overline s_n (x):= {s(A_{n-1}^{r_n^+}(x))}$ and $\overline u_n (x):= {u(A_{n-1}^{-r_n^-}(x))}$, we have for $x\in I_{n}$ and $1\leq k\leq \log n$,
\beqs
\|\overline s_n-s_{n-1}\|_{C^k},\ \|\overline u_n-u_{n-1}\|_{C^k}\leq \lambda^{-\frac13r_{n-1}}.
\eeqs
Moreover, for $k\in \mathbb Z_+$,
\beqs
\|\overline s_n-s_{n-1}\|_{C^k},\ \|\overline u_n-u_{n-1}\|_{C^k}\leq 8^{k\log r_n^+}\cdot C^{k^2}\cdot k!\cdot \|A_{n-1}^{r_{n-1}^+}(x)\|^{-\frac32}\cdot \varphi_{r_{n-1}}^k,
\eeqs
\beqs
\left|\partial^k\|A_{n-1}^{r_n^+}(x)\|\right|\leq 4^{k\log r_n^+}\cdot C^{k^2}\cdot k!\cdot \|A_{n-1}^{r_n^+}(x)\| \cdot \varphi_{r_n}^k.
\eeqs
\end{lemma}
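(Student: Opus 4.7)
The plan is to mimic the strategy of Lemma \ref{lemma.hyperbolic} from the $C^l$ case but with two structural upgrades needed for the $C^\infty$/Brjuno setting: (i) the critical angles are now exponentially small (of order $e^{-(\log(1/|x|))^\sigma}$) rather than polynomial, so the nonresonant condition of Lemma \ref{lemma.basic} must be checked via the Brjuno property $\log q_{n+1}\le \beta_\delta q_n^\delta$ with $\sigma\delta^{1/2}<1$; and (ii) we must propagate bounds on \emph{every} derivative, tracked through the weight $\varphi_{r_n}^k$, not just on derivatives up to a fixed order~$l$.

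First I would apply Lemma \ref{lemma.returntimeIn} to split into two cases. If $m=q_n$ then $r_n^+(x)=r_{n-1}^+(x)$, and everything reduces to the inductive hypothesis $(2)_{n-1}$ with nothing to prove. The substantive case is $m=q_{n+1}$. As in the proof of Lemma \ref{lemma.hyperbolic}, let $0=j_0<j_1<\cdots<j_p=m$ be the successive returning times of the orbit to $I_{n-1}$, and observe from the precise structure of Lemma \ref{lemma.returntimeIn} that there is a unique index $s$ (near $p/2$) with $j_{s+1}-j_s=q_{n-1}$ while $j_t-j_{t-1}=q_n$ for $t\ne s+1$, and moreover $|T^{j_s}x|,\,|T^{j_{s+1}}x|\ge b_{n-1}/4$. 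I would then decompose
\[
A_{n-1}^{m}(x)\;=\;A_{n-1}^{m-j_{s+1}}(T^{j_{s+1}}x)\cdot A_{n-1}^{j_{s+1}-j_s}(T^{j_s}x)\cdot A_{n-1}^{j_s}(x),
\]
further refined into its pieces of length $q_n$, and apply Lemma \ref{lemma.basic} to each pair. The angle at position $j_t$ equals $|\phi_0(T^{j_t}x)|$ by $(3.2)_{n-1}$, which is bounded below by $e^{-(\log q_{n+1})^\sigma}$ for a generic return and by $e^{-(\log q_n)^\sigma}$ at the central return; by the Brjuno hypothesis $(\log q_{n+1})^\sigma/q_n=o(\log\lambda)$, confirming the nonresonant condition $|\theta|\ge e_0^{-\eta}$ with $\eta=q_{n-1}^{-(1-\delta^{1/2})}$. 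Iterating Lemma \ref{lemma.basic} yields the desired lower bound $\|A_{n-1}^m(x)\|\ge \lambda_n^m$, i.e.\ $\lambda_n$-hyperbolicity, together with the Stirling-type correction (as in \eqref{stirling}) absorbed into $\lambda_{n-1}\to\lambda_n$.

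For the direction/norm derivative estimates up to order $k\le \log n$, I would apply Lemma \ref{lemma.hderivative} inductively along the same decomposition: the hypothesis $|\partial^k e_j|\le C^k k!\, e_j^{1+k\eta}$ is supplied by the inductive bound in $(2)_{n-1}$ after absorbing $4^{k\log q_n}\varphi_{r_{n-1}}^k$ into the small exponent $e_j^{k\eta}$ (this is possible precisely because $k\le \log n$ and $\eta q_{n-1}$ is large), while $|\partial^k\theta|\le e_0^{k\eta}$ follows from the smoothness of $\phi_0$ in the region $|x|\ge b_n$. Applying Lemma \ref{lemma.hderivative} repeatedly along the $p\asymp q_{n+1}/q_n$ pieces produces the $C^k$ bounds $\|\overline s_n-s_{n-1}\|_{C^k},\|\overline u_n-u_{n-1}\|_{C^k}\le \lambda^{-r_{n-1}/3}$, because each application contributes the factor $e_2^{-2+(k+1)\eta}\le \lambda^{-(2-o(1))q_n}$ and only the innermost piece is non-absorbed. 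For the full derivative range $k\in\mathbb Z_+$ we cannot use Lemma \ref{lemma.hderivative} directly; instead I would differentiate the product $A_{n-1}^m$ via Faà di Bruno/Leibniz and group the terms by the combinatorial pattern used to define $\varphi_{r_n}^k$. The inductive bound on $|\partial^k\phi_{n-1}|$ in $(1)_{n-1}$ gives the base; the two ``scale-$n$'' factors $4^{k\log q_{n+1}}$ and $e^{2k(\log q_n)^\sigma q_n/q_{n-1}}$ in $\varphi_{r_n}^k$ correspond exactly to the new multiplicative and exponential losses incurred at the $n$-th level, which matches the algebraic recursion $\varphi_{r_n}=\varphi_{r_{n-1}}\cdot e^{2(\log q_n)^\sigma q_n/q_{n-1}}$.

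The main obstacle is the second set of estimates: controlling \emph{all} derivative orders $k$ with the specific Gevrey-type weight $\varphi_{r_n}^k$. Two things make this delicate. First, the sample $\phi_0(x)=\exp(-(-\log|x|)^\sigma)$ has derivatives that blow up like $k!\,e^{\sigma k^{\sigma/(\sigma-1)}}\cdot|x|^{-k}$, so the factor $e^{\sigma k^{\sigma/(\sigma-1)}}$ built into $\varphi_{r_n}^k$ is forced and must be carried unchanged through every inductive step. Second, the combinatorial explosion in Leibniz' rule across $q_{n+1}/q_n$ factors must be tamed by the $4^{k\log q_{n+1}}$ prefactor, which is exactly large enough to dominate the number of partitions used while still permitting the absorption $\|\phi_n-\phi_{n-1}\|_{C^k}\le\lambda_n^{-r_{n-1}/100}$ for $k\le\log n$. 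Verifying that these two exponents compose consistently under the recursion—i.e., that applying Lemma \ref{lemma.basic}/\ref{lemma.hderivative} at level $n$ only adds the single $n$-indexed term to $\varphi_{r_n}^k$ without polluting the earlier terms—is the computational heart of the proof and will occupy most of the appendix calculation.
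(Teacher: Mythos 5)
Your proposal reproduces the paper's strategy for the hyperbolicity statement essentially verbatim: reduce to the case $m=q_{n+1}$, decompose along the return times $0=j_0<\dots<j_p=m$ with the distinguished short block of length $q_{n-1}$, verify the nonresonance via the Brjuno inequality with $\sigma\delta^{1/2}<1$, and iterate Lemma~\ref{lemma.basic}. That part matches.

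Where you diverge is in the derivative estimates. You handle the small range $k\le\log n$ by absorbing the inductive Gevrey weight $4^{k\log q_n}\varphi^k_{r_{n-1}}$ into the exponent $e_j^{k\eta}$ and invoking the old $C^l$ lemma (Lemma~\ref{lemma.hderivative}), and then handle general $k\in\mathbb Z_+$ by a separate from-scratch Fa\`a di Bruno/Leibniz bookkeeping designed to reproduce the recursion $\varphi_{r_n}=\varphi_{r_{n-1}}\cdot e^{2(\log q_n)^\sigma q_n/q_{n-1}}$. This works, but the paper follows a more unified route: it proves a single auxiliary lemma (Lemma~\ref{lemma.hderivative1} in Appendix~A) stated directly in terms of abstract Gevrey-type weights, i.e.\ it assumes $|\partial^k e_j|\le C^{k^2}k!\,e_j\varphi_j^k$ and $|\partial^k\theta|\le C^{k^2}k!\,\mu_k^k$ and propagates these to the product with the new weight $\varphi_3=\max\{\varphi_1,\varphi_2,\max_j\mu_j|\theta|^{-2}\}$. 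Repeatedly applying Lemma~\ref{lemma.hderivative1} gives the all-$k$ bound in a single sweep, and the $k\le\log n$ estimate then drops out as a corollary by checking that $\varphi_{r_n}^k\le e^{q_n/8}$ under Brjuno. So the paper does not invoke Lemma~\ref{lemma.hderivative} at all here, and it does not split the argument by derivative order. Your two-track approach is correct (the absorption and the constraints $l\eta<1/100$, $e_0^\eta\ge C^{l^3}$ all hold with $l=\log n$, $\eta=q_{n-1}^{-(1-\delta^{1/2})}$, and $e_0\ge\lambda_n^{q_n}$), but it duplicates the Gevrey bookkeeping: the Fa\`a di Bruno computation you sketch for general $k$ is precisely what Lemma~\ref{lemma.hderivative1} packages once and for all, so the paper's route is shorter and avoids re-proving a calculation that is already needed for the norm derivative bound.
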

The proof of Lemma \ref{lemma.Ihyperbolic} is similar to the one of Lemma \ref{lemma.hyperbolic} if we note the angles are not small with respect to the norms
$$e^{-(-\log (b_n/20))^{\sigma}}\geq e^{-(\log(80q_{n+1}))^{\delta^{-\frac12}}}\gg \lambda^{-\eta q_n}=\lambda^{-q_n^{\delta^{\frac12}}},$$
$$e^{-(\log(8 q_{n}))^\sigma}\geq e^{-(\log(8q_{n}))^{\delta^{-\frac12}}}\gg  \lambda^{-\eta q_{n-1}}=\lambda^{-q_{n-1}^{\delta^{\frac12}}},$$
where we have used the Brjuno condition $\log q_{n+1}\leq \beta_\delta q_{n}^\delta$ for large $n$, $\sigma \delta^{\frac12}<1$ and sufficient largeness of $\lambda$.
However, Lemma \ref{lemma.hderivative} is not sufficient to prove Lemma \ref{lemma.Ihyperbolic}. Instead, we need a more delicate analysis on the higher derivatives of norms and angles, which is given by Lemma \ref{lemma.hderivative1}.
\begin{lemma}\label{lemma.hphi0}
Recall $\sigma>1$ and for $x\in [0,2\pi)$,
$$\phi_0(x)=\exp\Big(-\big(\log\frac{8\pi}{x}\big)^{\sigma} -\big(\log\frac{8\pi}{2\pi-x}\big)^{\sigma}\Big).$$
Then there exists a constant $C>0$ such that for $n\in \mathbb N$,
$$\sup_{x\in [0,2\pi)}|\phi_0^{(n)}(x)|\leq e^{Cn^{\frac{\sigma}{\sigma-1}}}.$$
\end{lemma}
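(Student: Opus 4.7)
The plan is to extend $\phi_0$ holomorphically to a complex neighborhood of $(0,2\pi)$ and apply Cauchy's integral formula on a disk whose radius is proportional to the distance from the endpoint singularity. Since $\phi_0 = \psi_1 \cdot \psi_2$ with $\psi_1(x) = \exp(-(\log(8\pi/x))^\sigma)$ and $\psi_2(x) = \exp(-(\log(8\pi/(2\pi-x)))^\sigma)$, the symmetry $x \mapsto 2\pi - x$ together with the Leibniz rule reduces everything to estimating $\psi_1^{(n)}(x)$ on $(0,\pi]$, where $\psi_2$ is analytic with uniformly controlled derivatives.

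I would extend $\psi_1(z) = \exp(-(\log(8\pi/z))^\sigma)$ to $\mathbb{C}\setminus(-\infty,0]$ using the principal branch of the logarithm. For $x \in (0,\delta_0]$ with $\delta_0$ a small absolute constant, I apply Cauchy on the circle $|\zeta-x| = x/2$, which sits in the right half-plane:
\[|\psi_1^{(n)}(x)| \leq n!\,(x/2)^{-n}\,\sup_{|\zeta-x|=x/2}|\psi_1(\zeta)|.\]
The crux is to bound $|\psi_1(\zeta)|$ on this circle. Writing $w = \log(8\pi/\zeta) = u + iv$, one has $u = \log(8\pi/|\zeta|) \geq \log(16\pi/(3x))$ and $|v| = |\arg\zeta| \leq \arcsin(1/2) = \pi/6$. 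For $x$ small enough that $u$ exceeds a constant depending only on $\sigma$, the angle $\arg w = \arctan(v/u)$ is small enough that $\sigma|\arg w| \leq \pi/3$, and hence
\[\mathrm{Re}(w^\sigma) \;=\; |w|^\sigma \cos(\sigma\arg w) \;\geq\; \tfrac{1}{2}\,u^\sigma \;\geq\; \tfrac{1}{4}\bigl(\log(8\pi/x)\bigr)^\sigma.\]
This yields $|\psi_1(\zeta)| \leq \exp\bigl(-\tfrac{1}{4}(\log(8\pi/x))^\sigma\bigr)$ on the circle.

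To finish, I optimize the resulting bound $n!(x/2)^{-n}\exp(-\tfrac{1}{4}(\log(8\pi/x))^\sigma)$ over $x \in (0,\delta_0]$. Substituting $t = \log(8\pi/x)$ converts the exponent to $nt - \tfrac{1}{4}t^\sigma$; differentiating gives optimum $t^* = (4n/\sigma)^{1/(\sigma-1)}$ with maximum value $\tfrac{\sigma-1}{\sigma}\,n t^* = C_\sigma\, n^{\sigma/(\sigma-1)}$. Combined with $n! \leq e^{n\log n}$ and the fact that $n\log n = o(n^{\sigma/(\sigma-1)})$ for $\sigma > 1$, this yields $\sup_{x \in (0,\delta_0]}|\psi_1^{(n)}(x)| \leq \exp(Cn^{\sigma/(\sigma-1)})$. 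For $x \in [\delta_0,\pi]$, $\psi_1$ is holomorphic on a fixed complex neighborhood, so Cauchy on a fixed radius gives $|\psi_1^{(n)}(x)| \leq n!\,R^{-n}$, which is also absorbed into $\exp(Cn^{\sigma/(\sigma-1)})$. Applying Leibniz to $\phi_0 = \psi_1\psi_2$ completes the argument.

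The main obstacle is the delicate lower bound $\mathrm{Re}(w^\sigma) \geq c(\log(1/x))^\sigma$: the complex power $w^\sigma$ must be handled through the polar form, and one must choose the Cauchy radius carefully (here $x/2$, which is geometric but could equivalently be any constant fraction of $x$) so that simultaneously the contour stays in the right half-plane, the modulus $|\zeta|$ stays comparable to $x$, and $\sigma\,\mathrm{arg}(w)$ remains bounded away from $\pi/2$ by a margin uniform in $x$. An alternative via Fa\`a di Bruno applied to $e^{-h}$ with $h(x) = (\log(8\pi/x))^\sigma$ is possible but requires controlling a sum over set partitions, which is substantially more intricate than the complex-analytic route.
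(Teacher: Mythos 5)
Your proof is correct, but it takes a genuinely different route from the paper's. The paper proves this lemma by purely real-variable means: it applies Fa\`a di Bruno's formula first to $h(x)=(\log\frac1x)^\sigma$ to get $|h^{(n)}(x)|\le n!(2\sigma)^n(\log\frac1x)^\sigma x^{-n}$ (using the combinatorial identity of Lemma \ref{cor.r(1+r)} to collapse the partition sum), then a second time to $e^{-h}$ to obtain the pointwise majorant $|\phi_0^{(n)}(x)|\le n!(2\sigma)^n(2\log\frac1x)^{n\sigma}x^{-n}e^{-(\log\frac1x)^\sigma}$, and finally performs exactly the optimization you do, substituting $y=\log\frac1x$ and maximizing $(2y)^{n\sigma}e^{ny-y^\sigma}$ at $y\approx n^{1/(\sigma-1)}$. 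You replace the two Fa\`a di Bruno steps by a Cauchy estimate on the disk $|\zeta-x|=x/2$, with the key point being the lower bound $\mathrm{Re}\,(w^\sigma)\ge\frac12 u^\sigma$ obtained by keeping $\sigma\arg w$ bounded away from $\pi/2$; I checked the geometry ($|\arg\zeta|\le\pi/6$, $u\ge\log\frac{16\pi}{3x}\to\infty$) and the optimization $nt-\frac14t^\sigma\le C_\sigma n^{\sigma/(\sigma-1)}$, and both are sound, as is the absorption of $n!\le e^{n\log n}$ since $n\log n=o(n^{\sigma/(\sigma-1)})$. What your route buys is brevity and the avoidance of partition sums; what the paper's route buys is consistency with the Fa\`a di Bruno machinery used throughout Appendix A and, more substantively, the explicit pointwise majorant $n!(2\sigma)^n(2\log\frac1x)^{n\sigma}x^{-n}e^{-(\log\frac1x)^\sigma}$, which is quoted again verbatim in the verification of $(1')_n$ of Proposition \ref{lemma.Iupperbound} (where one needs a decreasing majorant of $|\partial^k\phi_0|$ on $I_n$). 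Your Cauchy bound yields an analogous decreasing pointwise majorant $n!(x/2)^{-n}e^{-\frac14(\log\frac{8\pi}{x})^\sigma}$, so nothing downstream would break, but that later step would have to be rephrased accordingly.
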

\begin{proof}
Equivalently, we consider $$\phi_0(x)=e^{-\big(\log\frac1{x}\big)^{\sigma} },\quad 0<x<\frac14.$$
By Fa\`a di Bruno's formula (Lemma \ref{Faadib})
$$\Big(\big(\log\frac1{x}\big)^\sigma\Big)^{(n)}=\sum_{\substack{k_1+2k_2+\cdots +nk_n=n\\ k=k_1+k_2+\cdots +k_n}} \frac{n!\cdot \sigma (\sigma-1)\cdots(\sigma-k+1)}{k_1!k_2!\cdots k_n!\cdot 1^{k_1}2^{k_2}\cdots n^{k_n}} \big(\log\frac{1}{x}\big)^{\sigma}\cdot \big(\frac{-1}{x}\big)^n.$$
And hence by Lemma \ref{cor.r(1+r)},
\begin{align*}\Big|\Big(\big(\log\frac1{x}\big)^\sigma\Big)^{(n)}\Big|&\leq \sum_{\substack{k_1+2k_2+\cdots +nk_n=n\\ k=k_1+k_2+\cdots +k_n}} \frac{n!\cdot \sigma^n}{k_1!k_2!\cdots k_n!} \big(\log\frac{1}{x}\big)^{\sigma}\cdot \big(\frac{1}{x}\big)^n\\
&\leq n!\cdot (2\sigma)^n\cdot\big(\log\frac{1}{x}\big)^{\sigma}\cdot \big(\frac{1}{x}\big)^n.\end{align*}
Also by Fa\`a di Bruno's formula
\begin{align*}|\phi_0^{(n)}(x)|&\leq \sum_{\substack{k_1+2k_2+\cdots +nk_n=n\\ k=k_1+k_2+\cdots +k_n}} \frac{n!\cdot (2\sigma)^n\cdot \phi_0}{k_1!k_2!\cdots k_n!} \big(\log\frac{1}{x}\big)^{k\sigma}\cdot \big(\frac{1}{x}\big)^n\\
&\leq n!\cdot (2\sigma)^n\cdot\big(2\log\frac{1}{x}\big)^{n\sigma}\cdot \big(\frac{1}{x}\big)^n\cdot e^{-\big(\log\frac1{x}\big)^{\sigma} }.\end{align*}
Let $y=\log\frac1{x}$ and
$$g_n(y):=(2y)^{ n\sigma}e^{ny-y^\sigma}.$$
Since $\sigma>1$, we know $g_n(y)$ reaches its maximum at $y\approx n^{\frac1{\sigma-1}}$ and
$$\sup_{y\geq 1}g_n(y)\leq (2y)^{ n\sigma}e^{(\sigma-1) y^\sigma}\leq e^{(\sigma-1) n^{\frac{\sigma}{\sigma-1}}}.$$
Hence it holds that
$$
\sup_{x}\Big|\phi_0^{(n)}(x)\Big|\leq n^2\cdot \sigma^n\cdot n!\cdot e^{(\sigma-1) n^{\frac{\sigma}{\sigma-1}}}\leq e^{\sigma n^{\frac{\sigma}{\sigma-1}}}.
$$
\end{proof}

\begin{proof3}
By assumption, $x_0,x_m\in I_n$ and $x_i\notin I_n$ for $0<i<m$.
This means $m=r_n^+(x)$. By Lemma \ref{lemma.returntimeIn}, we have $m=q_n$ or $q_{n+1}$.
If $m=q_n$, then $r_n^+(x)=r_{n-1}^+(x)=q_n$. It follows from the assumption that $\{A_{n-1}(x_0),\cdots A_{n-1}(x_m)\}$ is $\lambda_n$-hyperbolic and the required estimates are trivial.

Now assume that $m=q_{n+1}$.
Let $0=j_0<j_1<\cdots<j_p=m$ be the returning times to $I_{n-1}$. Then it holds that $j_t-j_{t-1}\geq q_{n-1}$. Moreover, due to the precise analysis on the returning time in the proof of Lemma \ref{lemma.returntimeIn}, there exists $s\approx p/2$ such that
$$j_t-j_{t-1}= q_n,\quad \text{for}\ t\neq s+1,$$
$$ j_{s+1}-j_s=q_{n-1},\quad |T^{j_s}x|\geq \frac{b_{n-1}}4,\quad |T^{j_{s+1}}x|\geq \frac{b_{n-1}}4.$$
By the inductive assumption, we have for $t\neq s+1$,
$$\|A_{n-1}^{j_t-j_{t-1}}(T^{j_{t-1}}x)\|\geq \lambda_{n-1}^{j_t-j_{t-1}}\geq \lambda_{n-1}^{q_{n}},$$
$$\left|s(A_{n-1}^{j_{t+1}-j_t}(T^{j_{t-1}}x)) -u(A_{n-1}^{j_t-j_{t-1}}(T^{j_{t-1}}x))\right| \geq \frac12 |\phi_0(T^{j_{t-1}}x)|\geq e^{-(-\log (b_n/20))^{\sigma}}.$$
By the definition of Brjuno frequency,
$$e^{-(-\log (b_n/20))^{\sigma}}\geq e^{-(\log(80q_{n+1}))^{\delta^{-\frac12}}}\gg \lambda^{-\eta q_n}=\lambda^{-q_n^{\delta^{\frac12}}}.$$
Hence we have verified the conditions of Lemma \ref{lemma.basic} to the decomposition
$$A_{n-1}^{j_s}(x)=A_{n-1}^{j_s-j_{s-1}}(T^{j_{s-1}}x)\cdots A_{n-1}^{j_2-j_{1}}(T^{j_{1}}x)A_{n-1}^{j_1}(x),$$
$$A_{n-1}^{m-j_{s+1}}(T^{j_{s+1}}x)=A_{n-1}^{j_p-j_{p-1}}(T^{j_{p-1}}x)\cdots A_{n-1}^{j_{s+2}-j_{s+1}}(T^{j_{s+1}}x).$$
As proved in Lemma \ref{lemma.hyperbolic}, we apply Lemma \ref{lemma.basic} in turn to $A_{n-1}^{j_s}(x)$, $A_{n-1}^{m-j_{s+1}}(T^{j_{s+1}}x)$ and $A_{n-1}^m=A_{n-1}^{m-j_{s+1}}(T^{j_{s+1}}x) A_{n-1}^{j_{s+1}-j_{s}}(T^{j_s}x)A_{n-1}^{j_{s}}(x)$.
Then it follows that
\begin{center}
  $\big\{A_{n-1}(x_0),\cdots A_{n-1}(x_m)\big\}$ is $\lambda_n$-hyperbolic,\quad  $\forall x_0,x_m\in I_n$,
\end{center}
$$\|A_{n-1}^{r_n^+}(x)\|\geq \lambda_n^{r_n^+},\quad x\in I_n.$$
On the other hand, it holds by assumption that
$$\Big|\partial^k \|A_{n-1}^{j_t-j_{t-1}}(x)\|\Big|\leq \|A_{n-1}^{j_t-j_{t-1}}(x)\|\cdot 4^{k\log q_{n}}\cdot C^{k^2}\cdot k!\cdot \varphi_{j_t-j_{t-1}}^k,$$
$$\varphi_{j_t-j_{t-1}}^k=e^{\sigma k^{\frac{\sigma}{\sigma-1}}}\cdot e^{2k\cdot \big((\log q_{n-1})^{\sigma}\cdot \frac{q_{n-1}}{q_{n-2}}+\cdots+(\log q_{N+1})^{\sigma}\cdot \frac{q_{N+1}}{q_{N}}\big)},$$
$$|\partial^k\phi_{n-1}(x)|\leq 8^{k\log q_n}\cdot C^{k^2}\cdot k!\cdot\varphi_{r_{n-1}}^k.$$
Then by repeatedly applying Lemma \ref{lemma.hderivative1}, we have
$$\Big| \partial^k\|A_{n-1}^{r_n^+}(x)\|\Big|\leq \|A_{n-1}^{r_n^+}(x)\|\cdot 4^{k\log q_{n+1}}\cdot C^{k^2}\cdot k!\cdot \varphi_{r_n}^k,$$
$$\varphi_{r_n}^k=e^{\sigma k^{\frac{\sigma}{\sigma-1}}}\cdot e^{2k\cdot \big((\log q_{n})^{\sigma}\cdot \frac{q_{n}}{q_{n-1}}+\cdots+(\log q_{N+1})^{\sigma}\cdot \frac{q_{N+1}}{q_{N}}\big)},$$
$$\|\overline s_n-s_{n-1}\|_{C^k},\ \|\overline u_n-u_{n-1}\|_{C^k}\leq 8^{k\log q_{n}}\cdot C^{k^2}\cdot k!\cdot \|A_{n-1}^{r_{n-1}^+}(x)\|^{-\frac32}\cdot \varphi_{r_{n-1}}^k.$$
Specially, if $1\leq k\leq \log n$, then
\begin{align*}
\big|\varphi_{r_n}^k\big|&\leq e^{C_1\log n\cdot (\log \log n)^\frac{\sigma}{\sigma-1}}\cdot e^{2\log n\cdot \big((\log q_{n})^{\sigma}\cdot \frac{q_n}{q_{n-1}}+\cdots+(\log q_{N+1})^{\sigma}\cdot \frac{q_{N+1}}{q_{N}}\big)}\\
&\leq e^{C_1\log n\cdot (\log \log n)^\frac{\sigma}{\sigma-1}}\cdot e^{2q_n\cdot \log n\cdot \big(q_{n-1}^{\delta^{1/2}-1}+q_{n-2}^{\delta^{1/2}-1}\cdot \frac{q_{n-1}}{q_n}+\cdots+q_{N}^{\delta^{1/2}-1}\cdot \frac{q_N}{q_n}\big)}\\
&\leq  e^{\frac18 q_n},
\end{align*}
where we have used Brjuno condition.
This leads to
$$\|\overline s_n-s_{n-1}\|_{C^k},\ \|\overline u_n-u_{n-1}\|_{C^k}\leq \lambda^{-\frac13r_{n-1}}.$$

\end{proof3}

\

Now we are ready for constructing $\phi_n(x)$ and verify $(1)_n$ -- $(3)_n$.

\

\noindent
{\bf \textit{Construction of $\phi_n$ and $A_n$.}}
Applying Lemma \ref{lemma.Ihyperbolic}, we have that for $x\in I_n$,
 \begin{center} $A_{n-1}(x),A_{n-1}(Tx),\cdots, A_{n-1}(T^{r_n^+-1}x)$ is $\lambda_n$-hyperbolic.\end{center}
Let $\overline s_n (x):= {s(A_{n-1}^{r_n^+}(x))}$ and $\overline u_n (x):= {u(A_{n-1}^{-r_n^-}(x))}$.
Let $e_n(x)$ be a $2\pi$-periodic $C^{\infty}$-function such that
$$e_n(x):=\phi_0(x)-(\overline s_n (x)-\overline u_n (x)), \quad x\in I_n.$$
Let $f_n$ be determined by Lemma \ref{lemma.2.3} with $\nu=2$. Define for $x\in \mathbb S^1$,
$$\hat e_n(x)= e_n(x)\cdot f_n(x),\quad \phi_n(x)=\phi_{n-1}(x)+\hat e_n(x),\quad A_n(x)=\Lambda \cdot R_{\frac\pi2-\phi_n(x)}.$$

\noindent
{\bf\textit{Verifying $(1)_n$ of Proposition \ref{lemma.Ilowerbound}.}}
We could apply Lemma \ref{lemma.Ihyperbolic} to obtain
$$\|e_n\|_{C^k}\leq \lambda^{-\frac1{3}q_{n}},\quad \forall x\in \frac{I_n}{10}, \quad 1\leq k\leq \log n.$$
Here we have $r_{n-1}=q_{n}$ since the first returning time of $x\in \frac{I_n}{10}$ to $I_{n-1}$ is $q_n$.
By Lemma \ref{lemma.2.3}, we have
$$\sup_{x\in I_n}|\partial^j f_n|\leq \frac{(Cq_{n+1})^j\cdot (j!)^2}{1+j^2}.$$
Then we have for $1\leq k\leq \log n$ and $x\in \mathbb S^1$,
\beqs
|\partial^k \hat e_n|=\sum_{i+j=k}\frac{k!}{i!\cdot j!}\cdot\big|\partial^ie_n\big|\cdot\big|\partial^jf_n\big|\leq \lambda^{-\frac1{3}q_{n}}\cdot C e^{c \log n\cdot \log q_{n+1}}\leq \lambda^{-\frac1{3}q_{n}}\cdot C e^{c \log n\cdot q_n^{\delta^{\frac12}}}\leq \lambda^{-\frac1{100}q_{n}}.
\eeqs
Hence we have for $1\leq k\leq \log n$,
$$\|\phi_n-\phi_{n-1}\|_{C^k}\leq \lambda_n^{-\frac1{4}q_n}.$$
Moreover, for $k\in \mathbb Z_+$ ($k$ large enough)
\begin{align*}
|\partial^k\phi_n|&\leq |\partial^k\phi_{n-1}|+\sum_{i+j=k}\frac{k!}{i!\cdot j!}\cdot \big|\partial^ie_n\big|\cdot\big|\partial^jf_n\big| \\
&\leq 8^{k\log q_{n}}\cdot C^{k^2}\cdot k!\cdot\varphi_{r_{n-1}}^k+\sum_{i+j=k}k!\cdot 8^{i\log q_{n+1}}\cdot C^{i^2}\cdot \varphi_{r_n}^i\cdot (Cq_{n+1})^j\cdot j!\\
&\leq 8^{k\log q_{n+1}}\cdot  C^{k^2}\cdot k! \cdot \varphi_{r_n}^k.
\end{align*}

\

\noindent
{\bf\textit{Verifying $(2)_n$ of Proposition \ref{lemma.Ilowerbound}.}}
It is clear that $A_n(x)=A_{n-1}(x)R_{-\hat e_n(x)}$. Moreover, it holds
$$A_n^{r_n^+}(x)=A_{n-1}^{r_n^+}(x)\cdot R_{-e_n(x)},\quad A_n^{-r_n^-}(x)=R_{-e_n(T^{-r_n^-}x)}\cdot  A_{n-1}^{-r_n^-}(x).$$
Since the rotations do not affect the norms, by Lemma \ref{lemma.Ihyperbolic} for each $x\in I_n$,

\begin{center} $A_n(x),A_n(Tx),\cdots, A_n(T^{r_n^+-1}x)$ is $\lambda_n$-hyperbolic,\end{center}
\beqs
\left|\partial^k\|A_{n}^{r_n^+}(x)\|\right|\leq 4^{k\log r_n}\cdot C^{k^2}\cdot k!\cdot \|A_{n-1}^{r_n^+}(x)\| \cdot \varphi_{r_n}^k.
\eeqs

\

\noindent
{\bf\textit{Verifying $(3)_n$ of Proposition \ref{lemma.Ilowerbound}.}}
It is clear that
$$s_n(x)-u_n(x)=\phi_0(x),\quad x\in \frac{I_n}{10}.$$
Moreover, it holds that
$$|s_n(x)-u_n(x)|\geq |\phi_0(x)|-\lambda_{n}^{-q_n}\geq C e^{-(-\log (b_n/20))^{\sigma}},\quad x\in I_n\backslash \frac{I_n}{10}.$$

\

\subsection{Proof of Proposition \ref{lemma.Iupperbound}.}
Let $\widetilde e_n(x)=-(s_n(x)-u_n(x))\cdot f_n(x)$ be a $2\pi$-periodic smooth function such that it is $-(s_n(x)-u_n(x))$
on $\frac{I_n}{10}$ and vanishes outside $I_n$. Hence by Proposition \ref{lemma.Ilowerbound}, $\widetilde e_n(x)=-\phi_0(x)\cdot f_n(x)$ on $\frac{I_n}{10}$.
Define
$$\widetilde \phi_n(x)=\phi_n(x)+\widetilde e_n(x),\quad \widetilde A_n(x)=\Lambda \cdot R_{\frac\pi2-\widetilde \phi_n(x)}.$$

\

\noindent
{\bf\textit{Verifying $(1)_n$ of Proposition \ref{lemma.Iupperbound}.}}
By Lemma \ref{lemma.2.3} with parameter $\nu=2$, we have
$$\sup_{x\in I_n}|\partial^j f_n|\leq \frac{(Cq_{n+1})^j\cdot (j!)^2}{1+j^2}.$$
From the proof of Lemma \ref{lemma.hphi0},
\begin{align*}|\partial^k\phi_0(x)| \leq k!\cdot (2\sigma)^k\cdot\big(2\log\frac{1}{x}\big)^{k\sigma}\cdot \big(\frac{1}{x}\big)^k\cdot e^{-\big(\log\frac1{x}\big)^{\sigma} }.\end{align*}
And the function on the righthand side is decreasing on $I_n$.
Then it implies for $0\leq k\leq \log n$ and $x\in I_n$,
$$|\partial^k \phi_0(x)|\leq [\log n]!\cdot (2\sigma)^{\log n}\cdot (4\log q_{n+1})^{\sigma \log n}\cdot q_{n+1}^{\log n}\cdot e^{-(\log q_{n+1})^\sigma}\leq e^{-\frac23(\log q_{n+1})^\sigma}.$$
Hence for $0\leq k\leq \log n$,
\begin{align*}
|\partial^k(\widetilde \phi_n-\phi_n)|&=|\partial^k\widetilde e_n|\leq \sum_{i+j=k}\frac{k!}{i!\cdot j!}\cdot \big|\partial^i\phi_0\big|\cdot\big|\partial^jf_n\big| \\
&\leq 2^{\log n}\cdot e^{-\frac23(\log q_{n+1})^\sigma}\cdot (\log n)^{2\log n}\cdot q_{n+1}^{\log n} \leq e^{-\frac12(\log q_{n+1})^\sigma}\leq q_{n+1}^{-2}.
\end{align*}

\

\noindent
{\bf\textit{Verifying $(2)_n$ of Proposition \ref{lemma.Iupperbound}.}}
Since $\widetilde \phi_n(x)=\phi_n(x)$ on $\mathbb S^1\backslash I_n$ and $(2)_n$ in Proposition \ref{lemma.Ilowerbound}, we have
that the sequence $\{\widetilde A_n(x), \widetilde A_n(Tx),\cdots,\widetilde A_n(T^{r_n^+-1}x)\}$ is $\lambda_n$-hyperbolic for $x\in I_n$.

\

\noindent
{\bf\textit{Verifying $(3)_n$ of Proposition \ref{lemma.Iupperbound}.}}
Note that $\widetilde s_n(x)-\widetilde u_n(x)=s_n(x)-u_n(x)-\widetilde e_n(x)$. Hence
$$\widetilde s_n(x)=\widetilde u_n(x),\quad x\in \frac{I_n}{10}.$$

\section{Proofs of Proposition \ref{lemma.Glowerbound} and \ref{lemma.Gupperbound}.}
\subsection{Preparation lemmas.}
Recall that for any $f\in G^s(I)$,
$$\|f\|_{s,K}:=\frac{4\pi^2}{3} \sup_k \frac{(1+|k|^2)}{K^k(k!)^s}\|\partial^kf\|_{C^0}.$$
We have the following lemmas, which is given in \cite{GWYZ}.
\begin{lemma}[Proposition 2.1 in \cite{GWYZ}]\label{lemma.2.1}
Assume $f,g\in G^{s,K}(I)$ and $\varepsilon>0$ is sufficiently small, we have
\begin{description}
  \item[(1)] $\|fg\|_{s,K}\leq \|f\|_{s,K}\|g\|_{s,K}.$
  \item[(2)] $\|\partial f\|_{s,(1+\varepsilon^{1/2})K}\leq K\varepsilon^{-1}.$
  \item[(3)] Assume $\|f-1\|_{s,K}\leq \varepsilon$, then $$\|f^{-1}-1\|_{s,(1+\varepsilon^{(s+8)^{-1}})K}\leq \varepsilon^{1/12}.$$
  \item[(4)] Assume $\|f-1\|_{s,K}\leq \varepsilon$, then $$\|f^{1/2}-1\|_{s,(1+\varepsilon^{(s+16)^{-1}})K}\leq \varepsilon^{1/12}.$$
  \item[(5)] Assume $\|f-1\|_{s,K}\leq \varepsilon$, then $\arcsin(f)\in G^{s,4K}(I)$ and $$\|\sin f\|_{s,(1+\varepsilon^{(s+8)^{-1}})K},\ \|\cos f-1\|_{s,(1+\varepsilon^{(s+8)^{-1}})K}\leq \varepsilon^{1/12}.$$
\end{description}
\end{lemma}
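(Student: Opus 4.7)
The plan is to derive all five items from the submultiplicativity statement (1), which is the only genuinely new input; parts (2)--(5) are then routine power-series manipulations built on (1) together with the calibrated choice of weight in the norm.

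For (1), I would apply Leibniz's formula $\partial^k(fg) = \sum_{j=0}^k \binom{k}{j} \partial^j f \cdot \partial^{k-j} g$ and bound each summand by the definition of $\|\cdot\|_{s,K}$. The combinatorial identity $\binom{k}{j}(j!)^s((k-j)!)^s = k!\,(j!(k-j)!)^{s-1}$ together with $j!(k-j)! \leq k!$ (and $s \geq 1$) gives $\binom{k}{j}(j!)^s((k-j)!)^s \leq (k!)^s$. It then remains to bound the convolution $\sum_{j=0}^k (1+j^2)^{-1}(1+(k-j)^2)^{-1}$ by a constant multiple of $(1+k^2)^{-1}$; splitting the sum at $j = k/2$ and using $\sum_{j\geq 0}(1+j^2)^{-1} < \infty$ produces a universal constant which is absorbed precisely by the prefactor $\frac{4\pi^2}{3}$ built into the norm. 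This yields $\|fg\|_{s,K} \leq \|f\|_{s,K}\|g\|_{s,K}$ with constant exactly $1$, i.e. $G^{s,K}(I)$ is a Banach algebra.

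For (2), I would bound $\|\partial^k(\partial f)\|_{C^0} = \|\partial^{k+1}f\|_{C^0}$ directly from the definition of $\|f\|_{s,K}$ and then reweight with the enlarged radius $K' = (1+\varepsilon^{1/2})K$. The relevant quantity becomes $\sup_k (1+\varepsilon^{1/2})^{-k}(k+1)^s$; the maximum, attained near $k \sim s\,\varepsilon^{-1/2}$, gives the stated $K\varepsilon^{-1}$-bound after absorbing the $s$-dependent constants. For (3)--(5), once (1) is in hand, each function is expanded in a power series around $1$ (or $0$): the Neumann series $f^{-1} = \sum_{n\geq 0}(-1)^n(f-1)^n$ for (3), the binomial series for $f^{1/2}$ in (4), and the usual Taylor series of $\sin, \cos$ in (5). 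Iterating (1) gives $\|(f-1)^n\|_{s,K} \leq \varepsilon^n$, so the tail sums are controlled by a geometric (or factorial-weighted geometric) series in $\varepsilon$. The slight enlargement of the radius from $K$ to $(1+\varepsilon^\alpha)K$, with $\alpha = (s+8)^{-1}$ or $(s+16)^{-1}$, is what converts the naive $O(\varepsilon)$ bound into the quantitatively cleaner $\varepsilon^{1/12}$: the enlargement provides a geometric margin $(1+\varepsilon^\alpha)^{-k}$ that tames the Taylor coefficients and the combinatorial factors $(k!)^s$ which would otherwise grow too fast, and the exponent $1/12$ is chosen with room to spare relative to the optimization in $k$ (analogous to Step 2).

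The main obstacle is the delicate accounting of constants: the weight $1+k^2$ and the prefactor $\frac{4\pi^2}{3}$ are calibrated precisely so that (1) holds with constant exactly $1$, and the radius enlargements $K \to (1+\varepsilon^{\alpha})K$ in (2)--(5) must be just wide enough to absorb the sub-geometric factors produced by differentiation and Taylor expansion, yet narrow enough not to degrade the polynomial-in-$\varepsilon$ bounds. Once (1) is secured as a unit-constant Banach-algebra estimate, each of (2)--(5) reduces to balancing the radius-enlargement exponent against the $s$-dependent growth arising from differentiation and from Taylor coefficients, which is essentially the same optimization as in (2).
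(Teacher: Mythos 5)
The paper does not prove this lemma; it is stated as a citation of Proposition~2.1 in~\cite{GWYZ}, and the authors use it as a black box. There is therefore no argument in the paper to compare your proposal against, so I can only assess the sketch on its own terms.

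Your plan for item~(1) is the standard one and is correct in outline: Leibniz, the bound $\binom{k}{j}(j!)^s((k-j)!)^s = k!\,(j!(k-j)!)^{s-1} \leq (k!)^s$ using $j!(k-j)!\leq k!$ and $s\geq 1$, then the weight-convolution estimate. With the weight $(1+k)^2$ and prefactor $\frac{4\pi^2}{3}$ the convolution bound follows from $(1+k)^2\leq 2(1+j)^2+2(1+(k-j))^2$ and $\sum_{m\geq 1} m^{-2}=\pi^2/6$, which gives $\sum_{j=0}^k\frac{(1+k)^2}{(1+j)^2(1+(k-j))^2}\leq \frac{2\pi^2}{3}\leq\frac{4\pi^2}{3}$, so the Banach-algebra constant is indeed~$1$. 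For items~(3)--(5) the Neumann/binomial/Taylor-series route with iterated use of~(1) is likewise the expected argument. One place where you should be more careful is item~(2): your optimization of $(1+\varepsilon^{1/2})^{-k}(k+1)^s$ over $k$ actually yields a maximum of order $(s/e)^s\,\varepsilon^{-s/2}$, not $\varepsilon^{-1}$ — the two agree only at $s=2$, and for $s>2$ the bound $\varepsilon^{-1}$ as written cannot follow from a radius enlargement of size $\varepsilon^{1/2}$. Either the statement implicitly normalizes $\|f\|_{s,K}$ or the enlargement exponent should be $\varepsilon^{1/s}$; this also matters for the downstream constants $\varepsilon^{1/12}$ in~(3)--(5), and should be checked against the source~\cite{GWYZ} rather than the present paper, which simply transcribes the statement.
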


\begin{lemma}[Corollary 2.2 in \cite{GWYZ}]\label{lemma.2.2}
Assume that $1<\nu<\infty$ and
$$g(x)=\left\{
         \begin{array}{ll}
           c\exp\left(-x^{\frac{-1}{\nu-1}} -(2\pi-x)^{\frac{-1}{\nu-1}}\right), & \hbox{$x\in(0, 2\pi)$;} \\
           0, & \hbox{$x=0$.}
         \end{array}
       \right.
$$
then there is some $C>0$ such that for any $x\in \mathbb S^1$, we have  for all $n\in \mathbb N$,
$$|g^{(n)}(x)|\leq C^n \exp\left(-|x|^{\frac{-1}{\nu-1}} -|2\pi-x|^{\frac{-1}{\nu-1}}\right)(n!)^{\nu}.$$
As a corollary, $g\in G^{\nu}(\mathbb S^1)$.
\end{lemma}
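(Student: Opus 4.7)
The plan is to apply Fa\`a di Bruno's formula to the composition $g = c\,e^{-h}$ with $h(x) = x^{-\alpha} + (2\pi-x)^{-\alpha}$ and $\alpha = 1/(\nu-1)$, paralleling the single-exponent computation already carried out in the proof of Lemma \ref{lemma.hphi0}. By the symmetry $x\leftrightarrow 2\pi-x$ it suffices to work on $0 < x \le \pi$, where only the singular piece $x^{-\alpha}$ matters while $(2\pi-x)^{-\alpha}$ together with its derivatives is uniformly $O(1)$ and absorbable into constants. Direct differentiation gives
$$\bigl|(x^{-\alpha})^{(j)}\bigr| = \alpha(\alpha+1)\cdots(\alpha+j-1)\,x^{-\alpha-j} \le C^j\,j!\,x^{-\alpha-j}$$
for a constant $C = C(\alpha)$, which is the only input on $h$ that I would need.

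Next, Fa\`a di Bruno expresses
$$g^{(n)}(x) = g(x)\sum_{m_1+2m_2+\cdots+nm_n=n}\frac{n!}{\prod_j m_j!\,(j!)^{m_j}}\prod_j\bigl(-h^{(j)}(x)\bigr)^{m_j}.$$
Setting $k = \sum_j m_j$ and $y = x^{-\alpha}$, and using the generating-function identity $\sum_{\sum m_j = k,\,\sum j m_j = n}\prod 1/m_j! = \binom{n-1}{k-1}/k!$ (obtained from $\prod_j e^{zt^j} = e^{zt/(1-t)}$), the derivative bound yields
$$|g^{(n)}(x)| \le C^n\,n!\,e^{-y}\,y^{n/\alpha}\sum_{k=1}^{n}\frac{y^k}{k!}\binom{n-1}{k-1}.$$
For each $k$ the elementary estimate $y^{n/\alpha+k} e^{-y} \le (n/\alpha+k)^{n/\alpha+k} e^{-(n/\alpha+k)} \le (n/\alpha+k)!$ (maximum at $y=n/\alpha+k$, Stirling), together with $n/\alpha+k \le \nu n$ and $(\nu n)!/k! \le C^n (n!)^{\nu-1}$ (Stirling combined with the multinomial coefficient bound $\binom{\nu n}{n,\ldots,n}\le\nu^{\nu n}$), collapses the sum to $C^n (n!)^{\nu-1}\cdot 2^n$, producing the uniform Gevrey-$\nu$ bound $|g^{(n)}(x)| \le C^n (n!)^{\nu}$ on $\mathbb S^1$, which is exactly what is needed for the corollary $g\in G^\nu(\mathbb S^1)$. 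The pointwise weighted form with the additional $e^{-h(x)}$ factor on the right-hand side is obtained by keeping a fraction of $e^{-y}$ in reserve: split into the regimes $y\lesssim n$ (where $e^{-y}\ge e^{-Cn}$ is essentially free and the polynomial $y^{n/\alpha+k}$ is itself controlled by $(\nu n)!$) and $y\gtrsim n$ (where only $e^{-y/2}$ is spent on the polynomial and the remainder $e^{-y/2}\le e^{-h(x)/2}$ stays on the right).

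The main obstacle is precisely this balancing between the polynomial blowup of the Bell-polynomial symbol and the exponential decay of $g$: a naive global application of $y^m e^{-y}\le m!$ completely absorbs the exponential and only yields $|g^{(n)}|\le C^n (n!)^{\nu+1}$ (i.e.\ $g\in G^{\nu+1}$), so obtaining the sharp Gevrey order $\nu$ requires both the combinatorial identity above and the multinomial-Stirling bound on $(\nu n)!/k!$. Smoothness at $x=0$ and $x=2\pi$ (hence on all of $\mathbb S^1$) is automatic, since the exponential factor forces $g$ and all its derivatives to vanish to infinite order at these two points, and the uniform bound then extends by continuity.
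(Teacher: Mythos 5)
The paper never proves this lemma: it is imported wholesale as Corollary~2.2 of \cite{GWYZ}, so the only in-paper point of comparison is the parallel Fa\`a di Bruno computation in Lemma~\ref{lemma.hphi0}, and your route is exactly that one (bound $|(x^{-\alpha})^{(j)}|\le C^jj!\,x^{-\alpha-j}$ with $\alpha=1/(\nu-1)$, expand $g^{(n)}$ by Fa\`a di Bruno, resum over the number of blocks $k$ via $\sum\prod 1/m_j!=\binom{n-1}{k-1}/k!$, then optimize $y^me^{-y}$ in $y=x^{-\alpha}$). The skeleton and the combinatorial identity are correct, but one displayed step is false as written: $(\nu n)!/k!\le C^n(n!)^{\nu-1}$ fails for small $k$ (take $k=1$, $\nu=2$: it would assert $(2n)!\le C^n\,n!$). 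Enlarging $(n/\alpha+k)!$ to $(\nu n)!$ \emph{before} dividing by $k!$ throws away precisely the cancellation that lowers the Gevrey order from $\nu+1$ to $\nu$. The repair is to keep the $k$-dependence: with $m=\lceil n(\nu-1)\rceil$ one has
$$\frac{(n/\alpha+k)!}{k!}\le\frac{(m+k)!}{k!}=\binom{m+k}{k}\,m!\le 2^{m+k}\,m!\le C^n\,(n!)^{\nu-1}$$
by Stirling, after which $\sum_k\binom{n-1}{k-1}=2^{n-1}$ closes the estimate. This is the step that actually produces the Gevrey exponent $\nu$, so it must be stated correctly.

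Separately, the pointwise bound with the \emph{full} weight $\exp(-|x|^{-1/(\nu-1)}-|2\pi-x|^{-1/(\nu-1)})$, as literally printed in the lemma, is not attainable by any argument: already for $n=1$, $|g'(x)|/g(x)=\alpha x^{-\alpha-1}+O(1)\to\infty$ as $x\to0^+$. Your ``keep a fraction of $e^{-y}$ in reserve'' device yields the bound with $e^{-h(x)/2}$ on the right-hand side, which is the correct statement (and is all the paper ever uses --- only the order of vanishing of $\|\phi_0\|_{s,K}$ restricted to $I_n$ matters, and a factor $\tfrac12$ in the exponent is harmless there). You should say explicitly that this halved-weight version is what your argument delivers, rather than presenting it as the full-weight inequality; the remaining points (symmetry reduction, absorbing the regular factor $(2\pi-x)^{-\alpha}$ on $(0,\pi]$, and infinite-order vanishing at the endpoints giving smoothness on $\mathbb S^1$) are fine.
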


\begin{lemma}[A revised version of Lemma 2.3 in \cite{GWYZ}]\label{lemma.2.3}
Assume that $1<\nu <s<\infty$. For any $n\geq N$, there exist an absolute constant $C$ and a $2\pi$-periodic function $f_n\in G^s(\mathbb S^1)$ such that
$$
f_n(x)\left\{
         \begin{array}{ll}
           =1, & \hbox{$x\in \frac{I_n}{10}$;} \\
           \in (0,1], & \hbox{$x\in I_n\backslash \frac{I_n}{10}$;} \\
           =0, & \hbox{$x\in \mathbb S^1\backslash I_n$,}
         \end{array}
       \right.
$$
and
$$\|f_n\|_{s,C}\leq e^{(s-v)b_n^{\frac{-1}{s-\nu}}}.$$
\end{lemma}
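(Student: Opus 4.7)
The plan is to construct $f_n$ as a convolution of the indicator of a suitable intermediate interval with a rescaled Gevrey-$\nu$ bump function, and then extract the desired Gevrey-$s$ bound by optimizing over the derivative index via Stirling's formula. This construction is standard; what is subtle is the choice of scales so that the exponent $(s-\nu)$ in the final bound comes out clean.

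First I would pick an intermediate interval $J_n$ with $I_n/10 \subset J_n \subset I_n$ such that $\mathrm{dist}(\partial(I_n/10),\partial J_n)\geq b_n/20$ and $\mathrm{dist}(\partial J_n,\partial I_n)\geq b_n/20$. Using Lemma \ref{lemma.2.2} with exponent $\nu$, take a normalized Gevrey-$\nu$ bump $g$ supported in $(0,2\pi)$ with $\int g=1$, shift so it is centered at $0$, and rescale to $g_\delta(x)=\delta^{-1}g(x/\delta)$ with $\delta$ chosen so that $\mathrm{supp}(g_\delta)\subset[-b_n/20,b_n/20]$. Lemma \ref{lemma.2.2} together with the exponential boundary decay of $g$ then gives a bound $\|g_\delta^{(k)}\|_{L^1}\leq A_0^{k+1}\delta^{-k}(k!)^{\nu}$ for an absolute constant $A_0$. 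Define $f_n:=\mathbf{1}_{J_n}*g_\delta$ as a periodic convolution. The containment of supports immediately yields $f_n\equiv 1$ on $I_n/10$, $f_n\equiv 0$ off $I_n$, and $f_n\in[0,1]$ everywhere, so the three pointwise properties are satisfied.

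Differentiating under the convolution gives
\[
\|\partial^k f_n\|_{C^0}\leq \|g_\delta^{(k)}\|_{L^1}\leq A_0^{k+1}\delta^{-k}(k!)^{\nu},
\]
so, plugging into the definition of $\|\cdot\|_{s,C}$,
\[
\frac{(1+k^2)}{C^k(k!)^s}\|\partial^k f_n\|_{C^0}\leq A_0\cdot (1+k^2)\cdot\Bigl(\tfrac{A_0}{C\delta}\Bigr)^{\!k}\frac{1}{(k!)^{s-\nu}}.
\]
Choosing the absolute constant $C$ large enough relative to $A_0$ so that $A_0/(C\delta)\leq b_n^{-1}$, matters reduce to estimating $\sup_k(1+k^2)\,t^k/(k!)^{s-\nu}$ with $t\leq b_n^{-1}$. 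Stirling's formula $(k!)^{s-\nu}\sim(k/e)^{(s-\nu)k}$ locates the maximizer at $k_\ast\approx t^{1/(s-\nu)}$, with maximal value $\sim e^{(s-\nu)t^{1/(s-\nu)}}\leq e^{(s-\nu)b_n^{-1/(s-\nu)}}$. The polynomial prefactor $(1+k^2)$ is absorbed by a slight enlargement of the exponent (since $b_n^{-1/(s-\nu)}\to\infty$), giving $\|f_n\|_{s,C}\leq e^{(s-\nu)b_n^{-1/(s-\nu)}}$.

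The main obstacle is not the construction but sharp bookkeeping of the absolute constants in the coupling between the scaling $\delta\sim b_n$ and the Gevrey parameter $C$: the improvement over the statement in \cite{GWYZ} amounts precisely to the explicit factor $(s-\nu)$ in the exponent, which forces one to absorb $A_0$ into $C\delta$ rather than letting extra constants multiply $b_n^{-1}$ before the power $1/(s-\nu)$ is taken. Once this coupling is carried out and one checks that lower-order Stirling corrections contribute only subexponentially in $b_n^{-1/(s-\nu)}$, the conclusion follows.
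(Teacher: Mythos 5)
Your proposal is correct and, at the level of the key computation, coincides with the paper's: both produce a Gevrey-$\nu$ bump with transition width $\sim b_n$, obtain the derivative bound $\|\partial^k f_n\|_{C^0}\lesssim (Cb_n^{-1})^k(k!)^\nu$, and then optimize $b_n^{-k}(k!)^{\nu-s}$ over $k$ (with the maximizer $k_*\approx b_n^{-1/(s-\nu)}$ giving $e^{(s-\nu)b_n^{-1/(s-\nu)}}$). The only real difference is how the bump is built: the paper takes a fixed profile $w_1$ made from the Gevrey-$\nu$ building block $\exp(-x^{-1/(\nu-1)})$ and composes it with $x\mapsto 10 b_n^{-1}x$, so the scaling factor $b_n^{-1}$ enters once through the chain rule; you instead mollify an indicator $\mathbf{1}_{J_n}$ with a rescaled Gevrey-$\nu$ kernel $g_\delta$, so it enters through $\|g_\delta^{(k)}\|_{L^1}=\delta^{-k}\|g^{(k)}\|_{L^1}$. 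These are interchangeable standard devices and lead to the same bookkeeping, so there is no substantive gain or loss; the paper's rescaling is slightly leaner because it carries the $1/(1+k^2)$ decay directly from $w_1\in G^\nu(\mathbb R)$ (in the paper's norm this factor is built into Gevrey membership), whereas you have to deal with the $(1+k^2)$ weight a posteriori.

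One small remark: the phrase ``absorbed by a slight enlargement of the exponent'' is misleading, since the target exponent $(s-\nu)$ is fixed in the statement. The clean fix is either to note that $g\in G^{\nu,K}$ with the paper's norm already gives $\|g^{(k)}\|_{C^0}\leq \text{const}\cdot K^k(k!)^\nu/(1+k^2)$, which cancels the weight exactly as in the paper, or to choose the absolute constant $C$ enough larger that $t:=A_0/(C\delta)\leq (1-\epsilon)b_n^{-1}$; the resulting factor $(1-\epsilon)^{1/(s-\nu)}<1$ in the exponent then dominates the subexponential $(1+k_*^2)$ prefactor. (The paper itself silently drops the multiplicative constant $\tfrac{4\pi^2}{3}$ from its norm, so it is being equally informal about $O(1)$ factors.) Also, your $f_n$ vanishes on part of $I_n\setminus\frac{I_n}{10}$ rather than being strictly positive there; the paper's $f_n$ has the same feature (its support is $|x|\le b_n/5\subset I_n$), so the ``$\in(0,1]$'' in the lemma is really ``$\in[0,1]$'' and nothing downstream uses strict positivity.
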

\begin{proof}
Define
$$\phi(x)=\left\{
            \begin{array}{ll}
              \exp \left(-x^{-\frac1{\nu-1}}\right), & \hbox{$x>0$;} \\
              0, & \hbox{$x\leq 0$,}
            \end{array}
          \right.
$$
$$w_0(x)=\frac{\phi(x+2)}{\phi(x+2)+\phi(-x-1)}, \quad w_1(x)=\left\{
            \begin{array}{ll}
              w_0(-x), & \hbox{$x>0$;} \\
              w_0(x), & \hbox{$x\leq 0$.}
            \end{array}
          \right.$$
By Lemma \ref{lemma.2.3}, it is straightforward to compute $w_1\in G^{\nu}(\mathbb R)$. Then we define $f_n$ to be a $2\pi$-periodic functions such that
$$f_n(x)=w_1(10b_n^{-1}x),\quad x\in [-\pi,\pi].$$
Since $w_1\in G^{\nu}(\mathbb R)$, it holds that
$$\sup_{x\in I_n} |f_n^{(r)}(x)|\leq \frac{(Cb_n^{-1})^r(r!)^{\nu}}{1+r^2}.$$
Note that the function $f(r):=a^rr^{-r}$ reaches its maximum at $r=ae^{-1}$. Thus
$$\sup_{x\in I_n} \frac{|f_n^{(r)}(x)|(1+r^2)}{C^r(r!)^{s}}\leq b_n^{-r}(r!)^{\nu-s}\leq \left(e^{-1}rb_n^{\frac{1}{s-\nu}}\right)^{-r(s-\nu)}\leq e^{(s-v)b_n^{\frac{-1}{s-\nu}}}.$$
\end{proof}

\begin{lemma}[A revised version of Lemma 4.1 in \cite{GWYZ}]\label{lemma.4.1}
Suppose $s>2$ and $\lambda$ is sufficiently large.
Let
$$E(x)=\left(
      \begin{array}{cc}
        e_2(x) & 0 \\
        0 & e_2(x)^{-1} \\
      \end{array}
    \right)R_{\frac\pi2-\theta(x)}\left(
      \begin{array}{cc}
        e_1(x) & 0 \\
        0 & e_1(x)^{-1} \\
      \end{array}
    \right),
$$
$$ e(x):=\|E(x)\|,\quad s(x):=s(E(x)),\quad u(x):=u(E(x)),$$
where $e_1,e_2,\theta\in G^{s,K}(I)$ satisfying
$$\inf_{x\in I}|\theta(x)|\geq ce^{-b_n^{-\sigma}}\gg \min \left\{\inf_{x\in I}e_1(x),\inf_{x\in I}e_2(x)\right\}^{-\frac1{100}},\quad \sigma:=\frac1{s-1},$$
$$\left\|\frac1{\sin\theta}\right\|_{s,K},\ \|\cot\theta\|_{s,K}\leq C e^{b_n^{-\sigma}},\ \|\sin\theta\|_{s,K}, \ \|\tan\theta\|_{s,K}\leq C,$$
$$\|e_j^{-1}\|_{s,K}\leq C\lambda^{-\frac13 q_{n-1}}, \ \ j=1,2.$$
Then it holds that
$$e(x)\geq c\cdot e_1(x)\cdot e_2(x)\cdot \theta(x),$$
$$\|e\|_{s,(1+\eta)K}\leq C^2 \|e_1\|_{s,K}\cdot\|e_2\|_{s,K},$$
$$\|e^{-1}\|_{s,(1+\eta)K}\leq C^2\|e_1^{-1}\|_{s,K}\cdot\|e_2^{-1}\|_{s,K}\cdot e^{b_n^{-\sigma}},$$
$$\left\|\pi/2-s\right\|_{s,(1+\eta)K}\leq \|e_1^{-1}\|_{s,K}^3\cdot \|e_1\|_{s,K}^2,\quad \left\|u\right\|_{s,(1+\eta)K}\leq \|e_2^{-1}\|_{s,K}^3\cdot \|e_2\|_{s,K}^2,$$
with $\eta=\lambda^{-\frac1{4000}q_{n-1}}$.
\end{lemma}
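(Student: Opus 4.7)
The plan is to reduce the lemma to an explicit $2\times 2$ matrix computation and then apply the Gevrey calculus of Lemma \ref{lemma.2.1} while carefully tracking the small inflations of the Gevrey radius $K$. First I would expand
\[
E(x) = \begin{pmatrix} e_1 e_2 \sin\theta & -e_1^{-1}e_2 \cos\theta \\ e_1 e_2^{-1}\cos\theta & e_1^{-1}e_2^{-1}\sin\theta \end{pmatrix}
\]
using $R_{\pi/2-\theta}=\bigl(\begin{smallmatrix} \sin\theta & -\cos\theta \\ \cos\theta & \sin\theta \end{smallmatrix}\bigr)$. Since $\det E = 1$, one has $e(x)^2 + e(x)^{-2}=\|E\|_F^2$, which produces the closed-form identity
\[
e^2+e^{-2} = (e_1 e_2 \sin\theta)^2 + (e_1^{-1}e_2 \cos\theta)^2 + (e_1 e_2^{-1}\cos\theta)^2 + (e_1^{-1}e_2^{-1}\sin\theta)^2.
\]

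The pointwise lower bound $e(x) \geq c\, e_1(x) e_2(x)\, \theta(x)$ is then immediate since the first term dominates under the hypotheses $e_j \gg 1$, $|\theta|\gg e_j^{-1/100}$ and $|\sin\theta|\sim|\theta|$. For the Gevrey estimates on $e$ and $e^{-1}$ I would factor $e^2 = (e_1 e_2 \sin\theta)^2(1+R)$ where $R$ collects the three subleading terms and has norm of order $(e_1 e_2)^{-2}$ times bounded products of $\cot\theta$ and $(\sin\theta)^{-1}$. Parts (1) and (4) of Lemma \ref{lemma.2.1} then yield $\|e\|_{s,(1+\eta)K}\leq C^2\|e_1\|_{s,K}\|e_2\|_{s,K}$ after a square root, while part (3) applied to $e^{-1}=(e_1 e_2 \sin\theta)^{-1}(1+R)^{-1/2}$ gives $C^2\|e_1^{-1}\|_{s,K}\|e_2^{-1}\|_{s,K}e^{b_n^{-\sigma}}$, with the extra $e^{b_n^{-\sigma}}$ accounting exactly for the assumed bound on $\|1/\sin\theta\|_{s,K}$.

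Next I would solve $E\hat s(E)=e^{-1}\hat v$ for the contracting direction. Writing $\hat s=(\sin(\pi/2-s),\cos(\pi/2-s))$, the vanishing of the first component of $E\hat s$ at leading order yields
\[
\tan\bigl(\tfrac{\pi}{2}-s\bigr) = \frac{\cot\theta}{e_1^2}\cdot\bigl(1+O(e_1^{-2}e_2^{-2})\bigr),
\]
and the symmetric computation applied to $E^{-1}$ produces $\tan u=-\cot\theta/e_2^{2}\cdot(1+O(\cdot))$. Since the arguments are tiny, $\arctan$ may be inverted by its Taylor series without leaving the Gevrey class, and Lemma \ref{lemma.2.1} then controls $\|\pi/2-s\|_{s,(1+\eta)K}$ by $\|\cot\theta\|\cdot\|e_1^{-1}\|_{s,K}^2$ up to harmless constants. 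The final stated bound $\|e_1^{-1}\|^3\|e_1\|^2$ is obtained by absorbing the factor $\|\cot\theta\|\leq Ce^{b_n^{-\sigma}}$ into one extra $\|e_1^{-1}\|\|e_1\|$, which is admissible because $\|e_1^{-1}\|^{-1}\geq\lambda^{q_{n-1}/3}\gg e^{b_n^{-\sigma}}$ under the hypotheses. The bound on $u$ follows by swapping the roles of $e_1$ and $e_2$.

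The main technical obstacle is bookkeeping the Gevrey radius. Each invocation of Lemma \ref{lemma.2.1}---product, reciprocal, square root, and the series inversion for $\arctan$---inflates $K$ by a factor $(1+\delta^{1/(s+c)})$ where $\delta$ is the small quantity being inverted or expanded. To collapse the $O(1)$ such steps into the single prescribed inflation $(1+\eta)K$ with $\eta=\lambda^{-q_{n-1}/4000}$, I would pick a geometric sequence of inflation parameters $\delta_k\sim\lambda^{-q_{n-1}/100}$ so that the cumulative product $\prod_k(1+\delta_k^{1/(s+c)})\leq 1+\eta$. The delicate case is the square-root step applied to $e^2+e^{-2}$: the relative error $e^{-4}$ is not small in Gevrey norm unless one first normalizes by the dominant $(e_1 e_2\sin\theta)^2$, and this normalization is precisely what forces the $e^{b_n^{-\sigma}}$ factor to appear in the bound on $\|e^{-1}\|$. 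Once this factoring is carried out once and for all, each of the remaining estimates is a direct application of the Gevrey calculus and closes with room to spare given $s>2$ and $\lambda$ large.
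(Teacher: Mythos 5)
The paper does not actually prove this lemma. After stating Lemmas \ref{lemma.4.1} and \ref{lemma.4.2}, it only records (in the Remark) the arithmetic estimate $\lambda^{-c_1q_n}e^{c_2b_n^{-1/(s-\nu)}}\le\lambda^{-c_3q_n}$ and says ``the proofs are nearly the same as the one in \cite{GWYZ} whenever one use the above estimate, we omit them here.'' So your attempt is a from-scratch reconstruction, and the most relevant comparison within the paper is the Appendix~A proof of the non-Gevrey analogue, Lemma~\ref{lemma.basic}. Your route (explicit entries of $E$, the identity $e^2+e^{-2}=\|E\|_F^2$, $\tan(\pi/2-s)$ via the $w/U$ ratio, then the Gevrey calculus of Lemma~\ref{lemma.2.1} with careful compounding of the radius inflation) is exactly the same scheme, and the norm estimates via the factoring $e^2=(e_1e_2\sin\theta)^2(1+R)$ are sound in spirit.

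There is, however, a genuine gap in your absorption argument for the angle bound. You derive $\|\pi/2-s\|_{s,(1+\eta)K}\lesssim\|\cot\theta\|_{s,K}\,\|e_1^{-1}\|_{s,K}^2$ and then claim this is $\le\|e_1^{-1}\|_{s,K}^3\|e_1\|_{s,K}^2$ by absorbing $\|\cot\theta\|\le Ce^{b_n^{-\sigma}}$ into ``one extra $\|e_1^{-1}\|\|e_1\|$,'' justified by $\|e_1^{-1}\|^{-1}\ge\lambda^{q_{n-1}/3}\gg e^{b_n^{-\sigma}}$. That inequality gives no control on the product $\|e_1^{-1}\|_{s,K}\|e_1\|_{s,K}$: by submultiplicativity this is bounded below only by the constant $\|1\|_{s,K}$, and so need not dominate $e^{b_n^{-\sigma}}$. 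What is actually needed is $\|\cot\theta\|\le\|e_1^{-1}\|\|e_1\|^2$, and this follows instead from the assumption $\inf_{x\in I}|\theta|\ge ce^{-b_n^{-\sigma}}\gg(\min\{\inf e_1,\inf e_2\})^{-1/100}$, which forces $\inf e_1\gg e^{100b_n^{-\sigma}}$ and hence $\|e_1\|_{s,K}\ge\frac{4\pi^2}{3}\inf e_1\gg e^{b_n^{-\sigma}}\ge\|\cot\theta\|/C$; combined with $\|e_1^{-1}\|\|e_1\|\ge\|1\|_{s,K}$ this gives $\|e_1^{-1}\|\|e_1\|^2\gg\|\cot\theta\|$. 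The same repair is needed for the $u$ bound. Once this is fixed (and the implicit quadratic-formula step extracting $e^2$ from $e^2+e^{-2}$, which you do flag, is written out), the argument closes.
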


\begin{lemma}[A revised version of Lemma 4.2 in \cite{GWYZ}]\label{lemma.4.2}
Consider a sequence of maps
$$A^l\in G^{s,K}(I,\mathrm{SL}(2,\mathbb R)),\quad 0\leq l <m-1\leq q_n^C.$$
and their products
$$A^m(x)=A_{m-1}(x)A_{m-2}(x)\cdots A_1(x)A_0(x),$$$$ A^{-m}(x)=A_{-m}^{-1}(x)A_{-m+1}^{-1}(x)\cdots A_{-2}^{-1}(x) A_{-1}^{-1}(x).$$
Denote $a(x)=\|A^m(x)\|$, $s(x)=s(A^m(x))$, $u(x)=u(A^m(x))$ and $a_l(x)=\|A_l(x)\|$.
Let $1<\xi<\frac1{100}$ and $\theta_l(x):=u(A_{l-1}(x))-s(A_l(x))$. Assume that
$$\inf_{x\in I} |\theta_l(x)|\geq ce^{-b_n^{-\sigma}}\gg \min_{0\leq l\leq m-1}\bigg(\inf_{x\in I} a_l(x)\bigg)^{-\frac1{100}},$$
$$\left\|\csc\theta_l\right\|_{s,K},\ \|\cot\theta_l\|_{s,K}\leq C e^{b_n^{-\sigma}},\ \|\sin\theta_l\|_{s,K}, \ \|\tan\theta_l\|_{s,K}\leq C,$$
$$\|a_l^{-1}\|_{s,K}\leq \bigg(\|a_l\|_{s,K}\bigg)^{-1+\xi} \leq C\lambda^{-\frac13 q_{n}}.$$
Then it holds that
$$\inf_{x\in I} a(x)\geq c^m e^{-2mb_n^{-1}}\prod_{l=0}^{m-1}\inf_{x\in I} a_l(x),$$
$$\|a\|_{s,(1+\eta)K}\leq C^{2m} \prod_{l=0}^{m-1}\|a_l\|_{s,K},$$
$$\|a^{-1}\|_{s,(1+\eta)K}\leq C^{2m} e^{4mb_n^{-\sigma}}\prod_{l=0}^{m-1} \|a_l^{-1}\|_{s,K},$$
$$\|s(\cdot)-s(A_0(\cdot))\|_{s,(1+\eta)K}\leq \lambda^{-\frac1{10}q_{n}},\quad \|u(\cdot)-u(A_{m-1}(\cdot))\|_{s,(1+\eta)K}\leq \lambda^{-\frac1{10}q_{n}},$$
with $\eta=\lambda^{-\frac1{4000}q_{n}}$.
\end{lemma}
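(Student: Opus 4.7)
The plan is to induct on $m$, with each inductive step an application of Lemma \ref{lemma.4.1} that merges the next matrix into the running partial product. Define $B_l(x) := A^l(x) = A_{l-1}(x)\cdots A_0(x)$, $b_l(x) := \|B_l(x)\|$, and let $K_l$ denote a non-decreasing sequence of Gevrey radii with $K_1=K$ that will be kept $\leq (1+\eta)K$ throughout. The induction hypothesis at level $l$ bundles together the pointwise lower bound $\inf_x b_l(x) \geq c^l e^{-2l b_n^{-1}}\prod_{j<l}\inf_x a_j(x)$, the Gevrey bounds $\|b_l\|_{s,K_l}\leq C^{2l}\prod_{j<l}\|a_j\|_{s,K}$ and $\|b_l^{-1}\|_{s,K_l}\leq C^{2l} e^{2lb_n^{-\sigma}}\prod_{j<l}\|a_j^{-1}\|_{s,K}$, and the angle convergence $\|s(B_l)-s(A_0)\|_{s,K_l},\ \|u(B_l)-u(A_{l-1})\|_{s,K_l}\leq \lambda^{-q_n/10}$.

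For the inductive step I invoke Lemma \ref{lemma.4.1} on the pair $(E_2,E_1)=(A_l, B_l)$. The relevant angle is $\theta':= u(B_l)-s(A_l)$, which by inductive item (iv) differs from $\theta_l=u(A_{l-1})-s(A_l)$ by at most $\lambda^{-q_n/10}$, a quantity negligible compared with $e^{-b_n^{-\sigma}}$ thanks to the frequency condition \eqref{condition} that underlies the construction. Hence $\inf|\theta'|\geq \tfrac{c}{2} e^{-b_n^{-\sigma}}$, and the Gevrey norms of $\sin\theta'$, $\csc\theta'$, $\cot\theta'$, $\tan\theta'$ inherit the required bounds from those of $\theta_l$ by the small-perturbation calculus of Lemma \ref{lemma.2.1}(3)(5). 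Lemma \ref{lemma.4.1} then delivers in one shot the bundled statement for $B_{l+1}=A_l B_l$, including the key angle-jump estimate $\|s(B_{l+1})-s(B_l)\|_{s,K_{l+1}}\leq \|b_l^{-1}\|_{s,K_l}^{3}\|b_l\|_{s,K_l}^{2}$; using $\|a_j\|_{s,K}\cdot \|a_j^{-1}\|_{s,K}\leq C^2$ together with the assumption $\|a_l^{-1}\|_{s,K}\leq C\lambda^{-q_n/3}$ this is at most $\lambda^{-q_n/5}$. Telescoping $s(B_m)-s(A_0)=\sum_{l}(s(B_{l+1})-s(B_l))$ across $m\leq q_n^C$ terms stays below $\lambda^{-q_n/10}$, and symmetrically for $u$, giving item (iv) at level $m$. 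The pointwise and Gevrey bounds on $b_m$ and $b_m^{-1}$ follow by iteratively composing the per-step bounds, accumulating the promised factors $C^{2m}$ and $e^{4mb_n^{-\sigma}}$.

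The main technical obstacle is controlling the inflation of the Gevrey radius across $m\leq q_n^C$ iterations. A naive use of Lemma \ref{lemma.4.1} with its stated $\eta = \lambda^{-q_{n-1}/4000}$ would give $K_m = K\prod_l(1+\eta_l)\sim K\exp(m\lambda^{-q_{n-1}/4000})$, which badly overshoots the target $(1+\lambda^{-q_n/4000})K$. The resolution is that at step $l$ the partial product $B_l$ already has norm $\gtrsim \lambda^{l q_n/3}$, far above the a priori threshold built into Lemma \ref{lemma.4.1}; re-examining its proof shows that $\eta$ at a given application may be taken as a fixed positive power of the actually attained inverse-norm, so $\eta_l\leq \lambda^{-l q_n/12000}$ at step $l$. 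The geometric series $\sum_{l\leq m}\eta_l$ is then dominated by $2\lambda^{-q_n/12000}$, comfortably absorbed into the target $\eta=\lambda^{-q_n/4000}$, and $K_m\leq (1+\eta)K$ as required. This refined bookkeeping of $\eta$ in terms of the growing norms—rather than the uniform a priori bound—is where the weaker arithmetic assumption on $\alpha$ is actually paid for, and it is the single place the argument needs to be sharpened compared to the analogous lemma in \cite{GWYZ}.
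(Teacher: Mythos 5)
The paper never actually writes out a proof of this lemma: the remark immediately following it says the argument is ``nearly the same as the one in \cite{GWYZ}'' and only records the single Diophantine estimate $\lambda^{-c_1 q_n}\,e^{c_2 b_n^{-1/(s-\nu)}}\le \lambda^{-c_3 q_n}$ that must be substituted where \cite{GWYZ} used the bounded-type hypothesis. Your reconstruction — an induction on $m$ that merges $A_l$ into the running product $B_l$ by Lemma~\ref{lemma.4.1}, tracking the angle perturbation $\theta'=u(B_l)-s(A_l)$ against $\theta_l$ and telescoping the jump in $s(\cdot)$ — is the natural and, as far as one can tell, correct skeleton. However, the two quantitative claims you lean on are flawed.

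First, your resolution of the Gevrey-radius inflation is numerically backwards. You claim $\eta_l\le\lambda^{-lq_n/12000}$ and conclude that $\sum_l\eta_l\le 2\lambda^{-q_n/12000}$ is ``comfortably absorbed'' into $\eta=\lambda^{-q_n/4000}$. But since $1/12000<1/4000$ and $\lambda>1$, one has $\lambda^{-q_n/12000}>\lambda^{-q_n/4000}$: your sum is \emph{larger} than the target, so nothing is absorbed. Even with the corrected exponent one obtains from your own scaling argument ($\eta_l\le \|b_l^{-1}\|^{3/4000}\le\lambda^{-lq_n/4000}$, matching how Lemma~\ref{lemma.4.1} converts $\lambda^{-q_{n-1}/3}$ into $\eta=\lambda^{-q_{n-1}/4000}$), the sum is $\approx\lambda^{-q_n/4000}$ — at the boundary, not comfortably below. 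The observation that later applications of Lemma~\ref{lemma.4.1} can take smaller $\eta$ because $\|B_l\|$ grows is a genuine and necessary refinement, but the claim about Lemma~\ref{lemma.4.1}'s proof is itself unverified (that proof is also omitted here), and the arithmetic as written does not close.

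Second, the step ``using $\|a_j\|_{s,K}\cdot\|a_j^{-1}\|_{s,K}\le C^2$'' is not available: the hypotheses only give $\|a_l^{-1}\|_{s,K}\le\|a_l\|_{s,K}^{-1+\xi}$, i.e.\ $\|a_l\|_{s,K}\|a_l^{-1}\|_{s,K}\le\|a_l\|_{s,K}^{\xi}$, and since $\|a_l\|_{s,K}\gtrsim\lambda^{q_n/3}$ this product can be as large as $\lambda^{\xi q_n/3}$. The bound $\|b_l^{-1}\|^3\|b_l\|^2\le\lambda^{-q_n/5}$ can still be salvaged by writing $\|b_l^{-1}\|^3\|b_l\|^2=\|b_l^{-1}\|(\|b_l\|\|b_l^{-1}\|)^2$ and using $\prod\|a_j\|^{\xi}$ together with $\prod\|a_j^{-1}\|\le\lambda^{-lq_n/3}$, picking up a net exponent $-1+3\xi<0$ because $\xi<1/100$; but the shortcut you invoke is false as stated.

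Finally, a more interpretive point: you identify the refined $\eta$-bookkeeping as ``the single place the argument needs to be sharpened compared to \cite{GWYZ}.'' The paper's own remark says otherwise — the declared modification is the Diophantine estimate $\lambda^{-c_1 q_n}e^{c_2 b_n^{-1/(s-\nu)}}\le\lambda^{-c_3 q_n}$, which ensures that the exponentially-many factors $e^{O(mb_n^{-\sigma})}$ appearing in the norm conclusions (and implicitly in the angle-telescoping) remain subordinate to $\lambda^{-cq_n}$. Your write-up does not make this arithmetic input explicit, and it is exactly this estimate, rather than the $\eta$ accounting, that the authors flag as the crux of generalizing beyond bounded type.
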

\begin{remark}
The key estimate to prove of Lemma \ref{lemma.4.1} and Lemma \ref{lemma.4.2} is the following. Recall $\alpha\in DC_{\gamma,\tau}$ and hence $q_{n+1}\leq \gamma^{-1} q_n^\tau$. Provided $s>\tau+1$, $\nu\in [1,s-\tau)$ and $\lambda$ sufficiently large, it holds that
$$\lambda^{-c_1q_n}\cdot e^{c_2b_n^{\frac{-1}{s-\nu}}}\leq e^{4c_2q_{n+1}^{\frac{1}{s-\nu}}-c_1\log\lambda \cdot q_n}\leq e^{4c_2\gamma^{\frac{-1}{s-\nu}}q_{n}^{\frac{\tau}{s-\nu}}-c_1\log\lambda \cdot q_n}\leq e^{-c_3\log \lambda q_n}=\lambda^{-c_3 q_n},$$
where $c_1$, $c_2$ and $c_3$ are positive constants.
Since the proofs are nearly the same as the one in \cite{GWYZ} whenever one use the above estimate, we omit them here.
\end{remark}

Now we assume that $1<\nu<s_1<s_2$ and $s_2-\nu>s_1-1>0$. We are going to prove Proposition \ref{lemma.Glowerbound} with $s=s_1$ and Proposition \ref{lemma.Gupperbound} with $s=s_2$. Since $\|f\|_{s_2,K}\leq \|f\|_{s_1,K}$ for any $f\in G^{s_1,K}$, we finish the proof by choosing $s=s_2$.

\subsection{Proof of Proposition \ref{lemma.Glowerbound}.}

We prove the proposition by induction. For sufficiently large $\lambda$, the construction for $n=N$ holds, see \cite{GWYZ} for more details
(we require $\lambda>e^{q_N^{q_N}}$ in this case).
In the following, we only focus on $n\geq N+1$.

Inductively, we assume that $\phi_N$, $\cdots$, $\phi_{n-1}$ have been constructed such that Proposition \ref{lemma.Glowerbound} holds for $N\leq i\leq n-1$, i.e.

\noindent
$(1)_i$. Set $\eta_i=\prod_{j=N}^{i-1}(1+\lambda^{-\frac1{8000}q_{j}})-1$. Then
$$\|\phi_i(x)-\phi_{i-1}\|_{s_1,(1+\eta_i)C}\leq \lambda_i^{-\frac{q_{i-1}}{10}}.$$

\noindent
$(2)_i$. For each $x\in I_i$, $A_i(x),A_i(Tx),\cdots, A_i(T^{r_i^+(x)-1}x)$ is $\lambda_i$-hyperbolic. Moreover, for $x\in I_{i}$,
$$\Big\| \|A_i^{r_i^+ }(x)\|\Big\|_{G^{s_1,(1+\eta_i)K}(I_i)}\leq \widetilde\lambda_i^{r_i^\pm},\quad
\Big\|\|A_i^{r_i^+ }(x)\|^{-1}\Big\|_{G^{s_1,(1+\eta_i)K}(I_i)}\leq \lambda_i^{-r_i^\pm}.
$$

\noindent
$(3)_i$. We have
$$s_i(x)-u_i(x)=\phi_0(x),\quad x\in \frac{I_i}{10},$$
$$|s_i(x)-u_i(x)|\geq \frac12\phi_0(x)\geq \frac12e^{-cb_i^{\frac{-1}{s_1-1}}},\quad x\in I_i\backslash \frac{I_i}{10}.$$
Now we construct $\phi_n(x)$ and verify $(1)_n$ -- $(3)_n$.

For any $x\in I_n$, Let $0=j_0<j_1<\cdots<j_p=r_n^{+}(x)$ be the returning times to $I_{n-1}$.
By Lemma \ref{lemma.returntimeIn}, we have $r_n^{+}(x)=q_{n+1}$ or $q_n$.
If $r_n^{+}(x)=q_n$, then $(1)_n$ -- $(3)_n$ hold by the previous step.
If $r_n^{+}(x)=q_{n+1}$, then also by Lemma \ref{lemma.returntimeIn}
\beq\label{p}p\leq q_n^{-1}q_{n+1}.\eeq

\

\noindent
\textit{Construction of $\phi_n$ and $A_n$.}
By $(2)_{n-1}$, it holds that
$$\big\|A_{n-1}^{r_{n-1}^+ }(x)\big\|\cdot e^{-cb_{n}^{\frac{-1}{s_1-1}}}\geq \lambda_{n-1}^{r_{n-1}^+ }\cdot e^{-cb_{n}^{\frac{-1}{s_1-1}}}\geq \lambda_{n-1}^{(1-\varepsilon)r_{n-1}^+ },\quad x\in I_{n-1}.$$
Combined this with $(3)_{n-1}$ to apply Lemma \ref{lemma.4.2}, we obtain that for each $x\in I_n$,
 \begin{center} $A_{n-1}(x),A_{n-1}(Tx),\cdots, A_{n-1}(T^{r_n^+-1}x)$ is $\lambda_n$-hyperbolic.\end{center}
Let $\overline s_n (x):= {s(A_{n-1}^{r_n^+}(x))}$ and $\overline u_n (x):= {u(A_{n-1}^{-r_n^-}(x))}$.
Let $e_n(x)$ be a $2\pi$-periodic $C^\infty$-function such that
$$e_n(x):=\phi_0(x)-(\overline s_n (x)-\overline u_n (x)), \quad x\in I_n.$$
Define for $x\in \mathbb S^1$,
$$\hat e_n(x)= e_n(x)\cdot f_n(x),\quad \phi_n(x)=\phi_{n-1}(x)+\hat e_n(x),\quad A_n(x)=\Lambda \cdot R_{\frac\pi2-\phi_n(x)}.$$

\

\noindent
\textit{Verifying $(1)_n$ of Proposition \ref{lemma.Glowerbound}.}
Let $\eta_n=\prod_{j=N}^{n-1}(1+\lambda^{-\frac1{8000}q_{j}})-1$, $a_l(x)=\|A_{n-1}^{j_{l+1}-j_l}(T^{j_l}x)\|$ and $I=I_n$.
Now we are going to verify the conditions of Lemma \ref{lemma.4.2}.
By $(2)_{n-1}$, we have
$$\inf_{x\in I}\|A_{n-1}^{j_{l+1}-j_l}(T^{j_l}x)\|\geq \lambda_{n-1}^{j_{l+1}-j_l},\quad 0\leq l<p.$$
By $(3)_{n-1}$, we have
\begin{align*}
&|\theta_l(x)|=|s(A_{n-1}^{j_{l+1}-j_l}(T^{j_l}x))-u(A_{n-1}^{j_l-j_{l-1}}(T^{j_{l-1}}x))|\\
&\geq\left\{
        \begin{array}{ll}
          |s_{n-1}(T^{j_l}x)-u_{n-1}(T^{j_l}x)|-C\lambda_{n-1}^{-q_{n}}\geq e^{-b_{n}^{\frac{-1}{s_1-1}}}, & \hbox{if $j_{l+1}-j_l,j_l-j_{l-1}=q_n$;} \\
          |s_{n-1}(T^{j_l}x)-u_{n-1}(T^{j_l}x)|-C\lambda_{n-1}^{-q_{n-1}}\geq e^{-b_{n-1}^{\frac{-1}{s_1-1}}}, & \hbox{if $j_{l+1}-j_l=q_{n-1}$ or $j_l-j_{l-1}=q_{n-1}$.}
        \end{array}
      \right.
\end{align*}
Also by $(2)_{n-1}$, we have
$$\|a_l\|_{G^{s_1,(1+\eta_{n-1})C}} \cdot \|a_l^{-1}\|_{G^{s_1,(1+\eta_{n-1})C}}\leq \left(\frac{\widetilde\lambda_{n-1}}{\lambda_{n-1}}\right)^{j_{l+1}-j_l}\leq \lambda^{4\varepsilon (j_{l+1}-j_l)}\leq \|a_l\|_{G^{s_1,(1+\eta_{n-1})C}}^\xi,$$
$$\|a_l\|_{G^{s_1,(1+\eta_{n-1})C}}^{-1+\xi}\leq \lambda_{n-1}^{-\frac 12 q_{n}}.$$
By $(1)_{n-1}$, we have $$\|\phi_{n-1}-\phi_0\|_{G^{s_1,(1+\eta_{n-1})C}}\leq 2\lambda^{-\frac1{100}}.$$
By Lemma \ref{lemma.2.1}, we also have
$$\|\sin\theta_l\|_{G^{s_1,(1+\eta_{n-1})C}},\quad \|\tan\theta_l\|_{G^{s_1,(1+\eta_{n-1})C}}\leq C,$$
$$\|\csc\theta_l\|_{G^{s_1,(1+\eta_{n-1})C}},\quad \|\cot\theta_l\|_{G^{s_1,(1+\eta_{n-1})C}}\leq Ce^{b_n^{-\frac1{s_1-1}}}.$$
Then we could apply Lemma \ref{lemma.4.2} and (\ref{p}) to obtain
$$\left\|\|A_{n-1}^{r_n^+}\|\right\|_{G^{s_1,(1+\eta_n)C}}\leq C^{2p}\widetilde \lambda_{n-1}^{r_n^+}\leq \widetilde \lambda_n^{r_n^+},$$
$$\left\|\|A_{n-1}^{r_n^+}\|^{-1}\right\|_{G^{s_1,(1+\eta_n)C}}\leq C^{2p}e^{4pb_n^{-\frac1{s_1-1}}} \lambda_{n-1}^{-r_n^+}\leq \lambda_n^{-r_n^+},$$
$$\|e_n\|_{G^{s_1,(1+\eta_n)C}}\leq \lambda_n^{-\frac1{20}q_{n}}.$$
Moreover, by the definition of $\phi_n$ and Lemma \ref{lemma.2.3}, we have
$$\|\phi_n-\phi_{n-1}\|_{G^{s_1,(1+\eta_n)C}}\leq \|e_n\|_{G^{s_1,(1+\eta_n)C}}\cdot \|f_n\|_{G^{s_1,(1+\eta_n)C}}\leq \lambda_n^{-\frac1{20}q_{n}}e^{(s_1-\nu)b_n^{\frac{-1}{s_1-\nu}}}\leq \lambda_n^{-\frac1{40}q_{n}}.$$

\

\noindent
\textit{Verifying $(2)_n$ of Proposition \ref{lemma.Glowerbound}.}
It is clear that $A_n(x)=A_{n-1}(x)R_{-\hat e_n(x)}$. Moreover, it holds
$$A_n^{r_n^+}(x)=A_{n-1}^{r_n^+}(x)\cdot R_{-e_n(x)},\quad A_n^{-r_n^-}(x)=R_{-e_n(T^{-r_n^-}x)}\cdot  A_{n-1}^{-r_n^-}(x).$$
Since the rotations do not affect the norms, for each $x\in I_n$, $A_n(x),A_n(Tx),\cdots, A_n(T^{r_n^+-1}x)$ is $\lambda_n$-hyperbolic.

\

\noindent
\textit{Verifying $(3)_n$ of Proposition \ref{lemma.Glowerbound}.}
It is clear that
$$s_n(x)-u_n(x)=\phi_0(x),\quad x\in \frac{I_n}{10}.$$
Moreover, it holds that
$$|s_n(x)-u_n(x)|\geq |\phi_0(x)|-\lambda_{n}^{-q_n}\gtrsim e^{-b_{n}^{\frac{-1}{s_1-1}}},\quad x\in I_n\backslash \frac{I_n}{10}.$$

\

\subsection{Proof of Proposition \ref{lemma.Gupperbound}.}
Let $\widetilde e_n(x)=-(s_n(x)-u_n(x))\cdot f_n(x)$ be a $2\pi$-smooth function. Hence by Proposition \ref{lemma.Glowerbound}, $\widetilde e_n(x)=-\phi_0(x)\cdot f_n(x)$.
Define
$$\widetilde \phi_n(x)=\phi_n(x)+\widetilde e_n(x),\quad \widetilde A_n(x)=\Lambda \cdot R_{\frac\pi2-\widetilde \phi_n(x)}.$$

\

\noindent
\textit{Verifying $(1)_n$ of Proposition \ref{lemma.Gupperbound}.}
By Lemma \ref{lemma.2.2} and Lemma \ref{lemma.2.3}, we have
$$\|\widetilde \phi_n-\phi_n\|_{G^{s_2,C}}=\|\widetilde e_n\|_{G^{s_2,C}}\leq \|\phi_0\|_{G^{s_1,C}}\cdot \|f_n\|_{G^{s_2,C}} \leq e^{-b_n^{\frac{-1}{s_1-1}}}\cdot e^{(s_2-\nu)b_n^{\frac{-1}{s_2-\nu}}}\leq e^{-\frac12b_n^{\frac{-1}{s_1-1}}}\leq q_{n+1}^{-2}.$$
Here we choose $\nu>1$ sufficiently close to $s_2$ such that $s_2-\nu>s_1-1>0$.

\

\noindent
\textit{Verifying $(2)_n$ of Proposition \ref{lemma.Gupperbound}.}
Since $\widetilde \phi_n(x)=\phi_n(x)$ on $\mathbb S^1\backslash I_n$ and $(2)_n$ in Proposition \ref{lemma.Glowerbound}, we have
that the sequence $\{\widetilde A_n(x), \widetilde A_n(Tx),\cdots,\widetilde A_n(T^{r_n^+-1}x)\}$ is $\lambda_n$-hyperbolic for $x\in I_n$.

\

\noindent
\textit{Verifying $(3)_n$ of Proposition \ref{lemma.Gupperbound}.}
Note that $\widetilde s_n(x)-\widetilde u_n(x)=s_n(x)-u_n(x)-\widetilde e_n(x)$. Hence
$$\widetilde s_n(x)=\widetilde u_n(x),\quad x\in \frac{I_n}{10}.$$

\section*{Appendix A. Higher derivatives for the angles and the norms.}
\setcounter{equation}{0}
\renewcommand{\theequation}{A.\arabic{equation}}
\setcounter{theorem}{0}
\renewcommand{\thesection}{A}

\begin{proof1}
Let
$$D:=\left(
      \begin{array}{cc}
        e_2 & 0 \\
        0 & e_2^{-1} \\
      \end{array}
    \right)R_{\frac\pi2-\theta}\left(
      \begin{array}{cc}
        e_1 & 0 \\
        0 & e_1^{-1} \\
      \end{array}
    \right),
\quad s(x):=s(D(x)),\quad u(x):=u(D(x)).$$
Then $$s_3-s_1=s-\frac\pi2,\quad u_3-u_2=u,\quad e_3=\|D\|.$$
A direct computation shows that
\beqs
D^tD=\left(
      \begin{array}{cc}
        e_1^2e_2^2\sin^2\theta+e_1^2e_2^{-2}\cos^2\theta & (e_2^{-2}-e_2^2)\cos\theta\sin\theta \\
        (e_2^{-2}-e_2^2)\cos\theta\sin\theta & e_1^{-2}e_2^{-2}\sin^2\theta+e_2^2e_1^{-2}\cos^2\theta \\
      \end{array}
    \right).
\eeqs
First note
\beqs
e_3^2(x)+e_3^{-2}(x)=e_1^2e_2^2\sin^2\theta+e_1^2e_2^{-2}\cos^2\theta +e_1^{-2}e_2^2\cos^2\theta+e_1^{-2}e_2^{-2}\sin^2\theta.
\eeqs
If $$e_0\gg 1,\quad |\theta|\geq e_0^{-\eta},$$
then we have $e_3^2+e_3^{-2}\geq e_1^2e_2^2\sin^2\theta \gg 1$ and hence $e_3 \gg 1$.
Then
$$e_3= e_1e_2|\sin\theta|+O(e_0^{-\frac12}).$$

Next we compute $s(x)$.
Let $a=e_1e_2$ and $b=e_1e_2^{-1}$.
It is straightforward to compute
\beq\label{sformula}\tan s(x)=\sqrt{\frac{U^2}{w^2}+1}+\frac Uw=\Bigg(\sqrt{\frac{U^2}{w^2}+1}-\frac Uw\Bigg)^{-1},\eeq
where
$$w=2(e_2^2-e_2^{-2})\cos\theta\sin\theta,\quad U=(a^2-a^{-2})\sin^2\theta+(b^2-b^{-2})\cos^2\theta.$$
Hence
$$\frac\pi2-s(x)=\arctan \Bigg(\sqrt{\frac{U^2}{w^2}+1}-\frac Uw\Bigg).$$
Then we have
\begin{align*}
\frac{U}{w}&=\frac{e_1^2e_2^2-e_1^{-2}e_2^{-2}}{2(e_2^2-e_2^{-2})}\tan\theta +\frac{e_1^2e_2^{-2}-e_1^{-2}e_2^2}{2(e_2^2-e_2^{-2})}\cot\theta\\
&=\frac12e_1^2\cdot (1-e_2^{-4})^{-1} \cdot\left((1-e_1^{-4}e_2^{-4})\tan\theta +(e_2^{-4}-e_1^{-4})\cot\theta\right).
\end{align*}
Note $|\theta|\geq e_0^{-\eta}$. Then it is easy to compute that
\begin{align*}
\frac{U}{w}\geq \frac12 e_1^{2-\eta}.
\end{align*}
Hence we have
$$\left|\frac\pi2-s(x)\right|\leq e_1^{-2+\eta}.$$
This implies that
$$\left|s_3(x)-s_1(x)\right|\leq e_1^{-2+\eta}.$$
Similarly, we could also obtain
$$\left|u_3(x)-u_2(x)\right|\leq e_2^{-2+\eta}.$$
\end{proof1}

\begin{lemma}\label{lemma.hderivative1}
Let $l$ be a finite positive integer or infinity, $C$ be a universal constant and
$$e_j(x)=\|E_j(x)\|,\quad s_j(x)=s(E_j(x)),\quad u_j(x)=u(E_j(x)), \quad j=1,2,3,$$
$$\theta(x)=s_2(x)-u_1(x),\quad e_0=\min\{e_1,e_2\}.$$
Assume that for $x\in I$,
$$|\partial_x^ke_j|\leq C^{k^2}\cdot k!\cdot  e_j\cdot \varphi_j^k,\quad |\partial_x^k\theta|\leq C^{k^2}\cdot k!\cdot \mu_k^k,\quad  j=1,2,\quad k=1,2,3,\cdots,l,$$
$$\inf_{x\in I}e_0(x)\gg 1,\quad  |\theta|\geq e_0^{-\frac1{100}}.$$
Let $$ \varphi_3:=\max\{\varphi_1,\varphi_2,\max_{1\leq j\leq k}\mu_j\cdot |\theta |^{-2}\},\quad \tilde \varphi_3:=\max\{\varphi_1,\varphi_2,\max_{1\leq j\leq k}\mu_j\}.$$
Then we have for $k=1,2,3,\cdots,l$ and $x\in I$,
$$ |\partial_x^ke_3|\leq 4^k\cdot C^{k^2}\cdot k!\cdot e_3\cdot \varphi_3^k,$$
$$\|u_3-u_2\|_{C^k}\leq 8^k\cdot C^{k^2}\cdot k!\cdot e_2^{-\frac32}\cdot \tilde \varphi_3^k,\quad \|s_3-s_1\|_{C^k}\leq 8^k\cdot C^{k^2}\cdot k!\cdot e_1^{-\frac32}\cdot \tilde\varphi_3^k.$$
\end{lemma}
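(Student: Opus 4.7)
The plan is to propagate the hypothesised derivative bounds on $e_j$ and $\theta$ through the explicit closed-form identities derived in the proof of Lemma~\ref{lemma.basic}, using Fa\`a di Bruno's formula and Leibniz's rule, and relying on the super-exponential factor $C^{k^2}$ in the conclusion to absorb all the binomial coefficients and multinomial combinatorial sums that arise along the way.

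The first step will be a short ``Gevrey-type calculus'' for positive functions on $I$ satisfying $|\partial^k f|\le C^{k^2}k!\,f\,\varphi^k$: the same-shaped bound holds, after enlarging the constant and replacing $\varphi$ by $\max(\varphi,\psi)$, for the product $f\cdot g$, for $f^{1/2}$, for $1/f$ under a positive lower bound on $f$, and for compositions $F\circ f$ with $F$ entire. The key mechanism is that the Bell-polynomial-type sums appearing in Fa\`a di Bruno's formula can each be bounded by $k!$ together with a geometric factor in $k$, the latter being absorbed into the enlarged $C^{k^2}$.

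Applying this calculus to $\sin\theta$ and $\cos\theta$ gives estimates of the form $|\partial^k\sin\theta|\le C^{k^2}k!\,\tilde\varphi_3^k$, and a parallel estimate for $1/\sin\theta$: because $|\theta|\ge e_0^{-1/100}$, each differentiation introduces at most one extra power of $|\sin\theta|^{-1}$, so the cumulative loss after $k$ steps is at most $|\theta|^{-k-1}$, which is why $\varphi_3$ is allowed to contain the generous factor $|\theta|^{-2}$. Substituting into the identity
\[
e_3^2+e_3^{-2}=e_1^2e_2^2\sin^2\theta+e_1^2e_2^{-2}\cos^2\theta+e_1^{-2}e_2^2\cos^2\theta+e_1^{-2}e_2^{-2}\sin^2\theta
\]
from the proof of Lemma~\ref{lemma.basic}, extracting a square root via the closure statement applied after factoring out the dominant term $e_1^2e_2^2\sin^2\theta$, and then invoking closure under products yields $|\partial^k e_3|\le 4^kC^{k^2}k!\,e_3\,\varphi_3^k$, the factor $4^k$ accounting for the four summands.

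For the angle estimates I would use the arctangent formula $\tfrac\pi 2-s_3=\arctan\bigl(\sqrt{1+U^2/w^2}-U/w\bigr)$ from the proof of Lemma~\ref{lemma.basic} together with the fact, established there, that $U/w\gtrsim e_1^{2-\eta}$, so the argument is of size $e_1^{-2}$. Expanding $\arctan$ in its convergent Taylor series at the origin and applying the calculus above, the leading behaviour of $\partial^k(\tfrac\pi 2-s_3)$ matches that of $\partial^k(w/U)$ times a combinatorial factor bounded by $\tilde\varphi_3^k$; any residual $|\theta|^{-2k}$ loss arising from the $\cot\theta$ pieces inside $U$ is absorbed into the spare half-power $e_1^{1/2}$ afforded by the conclusion $e_1^{-3/2}$ (in place of the $e_1^{-2}$ of Lemma~\ref{lemma.basic}), since $|\theta|^{-2k}\le e_0^{k/50}\ll e_1^{1/2}$ for $k\le l$ in the regime of interest. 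The symmetric argument with $E_1,E_2$ interchanged yields the bound on $u_3-u_2$. The principal obstacle will be the combinatorial bookkeeping — verifying that the prescribed constants $4^k$ and $8^k$ really suffice to absorb every cross term produced by iterated applications of Leibniz and Fa\`a di Bruno — but no new conceptual input is required beyond the explicit formulas of Lemma~\ref{lemma.basic}.
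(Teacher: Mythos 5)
Your approach is essentially the paper's: you work from the closed-form identity $e_3^2+e_3^{-2}=e_1^2e_2^2\sin^2\theta+\cdots$ and the arctangent expression for $\tfrac{\pi}{2}-s_3$ established in Lemma~\ref{lemma.basic}, push derivative bounds through via Fa\`a di Bruno and Leibniz, control the Bell-polynomial sums by a combinatorial lemma (the paper's Lemma~\ref{cor.r(1+r)}), and recover $\partial^k e_3$ from $\partial^k(e_3^2+e_3^{-2})$ by an induction on $k$ that isolates the main term $2e_3\partial^k e_3$. Organising this as a ``Gevrey-type calculus'' closed under products, reciprocals and composition is a sensible way to present the same computation.

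There is, however, a concrete issue with your justification of the angle bound. You propose to discharge the per-derivative loss $|\theta|^{-2}$ coming from the cotangent-type pieces in $w/U$ by writing $|\theta|^{-2k}\le e_0^{k/50}\ll e_1^{1/2}$ and absorbing it into the spare half-power $e_1^{1/2}$ in the target $e_1^{-3/2}$. With $|\theta|\ge e_0^{-1/100}$ and $e_1\asymp e_0$, the inequality $e_0^{k/50}\le e_1^{1/2}$ forces $k\lesssim 25$, so the absorption fails for large $k$; but the lemma allows $l=\infty$, and in the paper's applications (e.g.\ the proof of Proposition~\ref{lemma.Ilowerbound}) it is invoked with $k$ as large as $\log n$, which is unbounded. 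This is exactly why the paper does \emph{not} absorb the $|\theta|^{-2}$ loss into a fixed spare power of $e_1$: in the norm estimate the loss is built directly into $\varphi_3$ through the term $\max_j\mu_j\,|\theta|^{-2}$, and in the angle estimate the paper instead bounds the derivatives of $w/U=e_1^{-2}(1-e_2^{-4})\sin 2\theta\cdot\bigl((1-e_1^{-4}e_2^{-4})\sin^2\theta+(e_2^{-4}-e_1^{-4})\cos^2\theta\bigr)^{-1}$ and the subsequent composition with $\arctan$ (using $|\arctan^{(n)}|\le n!$) term by term, keeping track of $\tilde\varphi_3$ throughout. You would need to replace the absorption step with this kind of explicit unwinding of $w/U$, or otherwise show that the superexponential prefactor $C^{k^2}$ swallows the residual $|\theta|^{-O(k)}$ for all $k$, which your present argument does not do.
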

\begin{proof}
We first state some lemmas for higher derivatives.
\begin{lemma}[The formula of Fa\`a di Bruno]\label{Faadib}
Assume $f$ and $g$ are two smooth functions in an open interval $(a,b)$, let $h=g\circ f$, then
$$
h^{(n)}(x)=\sum_{\substack{k_1+2k_2+\cdots +nk_n=n\\ k=k_1+k_2+\cdots +k_n}} \frac{n!}{k_1!k_2!\cdots k_n!}g^{(k)}\big(f(x)\big)\Bigg(\frac{f^{(1)}}{1!}\Bigg)^{k_1} \Bigg(\frac{f^{(2)}}{2!}\Bigg)^{k_2}\cdots \Bigg(\frac{f^{(n)}}{n!}\Bigg)^{k_n}.
$$
\end{lemma}

\begin{lemma}\label{cor.r(1+r)}
$$
\sum_{\substack{k_1+2k_2+\cdots +nk_n=n\\ k=k_1+k_2+\cdots +k_n}}\frac{k!}{k_1!k_2!\cdots k_n!}R^k=R(1+R)^{n-1}.
$$
\end{lemma}

Now we start the proof.
Note that
\beq\label{compe3}
e_3^2+e_3^{-2}=e_1^2e_2^2\sin^2\theta+e_1^2e_2^{-2}\cos^2\theta +e_1^{-2}e_2^2\cos^2\theta+e_1^{-2}e_2^{-2}\sin^2\theta.
\eeq
Take $n$th derivative on both sides. Then by Fa\`a di Bruno's formula, we have for $m=1,2,3$,
$$\partial^n\big(e_m^2\big)=\sum_{j=1}^{n-1} n!\cdot\frac{e_m^{(j)}}{j!}\cdot \frac{e_m^{(n-j)}}{(n-j)!}+2e_m\cdot e_m^{(n)},$$
\begin{align*} \partial^n(e_m^{-2})= \sum_{\substack{k_1+2k_2+\cdots +nk_n=n\\ 2\leq k=k_1+k_2+\cdots +k_n}} \frac{n!(-1)^k (k+1)!}{k_1!k_2!\cdots k_n!e_m^{k+2}}\bigg(\frac{e_m^{(1)}}{1!}\bigg)^{k_1} \bigg(\frac{e_m^{(2)}}{2!}\bigg)^{k_1}\cdots \bigg(\frac{e_m^{(n)}}{n!}\bigg)^{k_n}-\frac{2e_m^{(n)}}{e_m^3}.
\end{align*}
From Lemma \ref{cor.r(1+r)} and assumption
$$\big|\partial_x^ne_m\big|\leq C^{n^2}\cdot n!\cdot  e_m\cdot \varphi_m^n,\quad m=1,2,$$
we have for $m=1,2$,
\beq\label{nderivativeem2}\Big|\partial^n\big(e_m^2\big)\Big|\leq 3 C^{n^2}\cdot n!\cdot e_m^2\cdot \varphi_m^n,
\quad
\Big|\partial^n\big(e_m^{-2}\big)\Big|\leq  3 C^{n^2}\cdot n!\cdot e_m^{-2}\cdot\varphi_m^n.\eeq
Moreover, we have
$$\partial^n(\sin^2\theta)=\sum_{\substack{k_1+2k_2+\cdots +nk_n=n\\ k=k_1+k_2+\cdots +k_n}} \frac{n!\cdot (-2)^{k-1}\cos(2\theta+\frac{k\pi}2)}{k_1!k_2!\cdots k_n!}\Bigg(\frac{\theta^{(1)}}{1!}\Bigg)^{k_1} \Bigg(\frac{\theta^{(2)}}{2!}\Bigg)^{k_2}\cdots \Bigg(\frac{\theta^{(n)}}{n!}\Bigg)^{k_n},$$
$$\partial^n(\cos^2\theta)=\sum_{\substack{k_1+2k_2+\cdots +nk_n=n\\ k=k_1+k_2+\cdots +k_n}} \frac{n!\cdot 2^{k-1}\cos(2\theta+\frac{k\pi}2)}{k_1!k_2!\cdots k_n!}\Bigg(\frac{\theta^{(1)}}{1!}\Bigg)^{k_1} \Bigg(\frac{\theta^{(2)}}{2!}\Bigg)^{k_2}\cdots \Bigg(\frac{\theta^{(n)}}{n!}\Bigg)^{k_n}.$$
By assumption, we have $|\theta^{(n)}|\leq C^{n^2}\cdot n!\cdot \mu_n^n$.
This gives that
\beq\label{nderivativesin2}
|\partial^n(\sin^2\theta)|\leq 3C^{n^2}\cdot n!\cdot \big(\max_{1\leq k\leq n} \mu_k\big)^n,\quad |\partial^n(\cos^2\theta)|\leq 3C^{n^2}\cdot n!\cdot\big(\max_{1\leq k\leq n} \mu_k\big)^n.
\eeq
Then we could obtain the estimates for the $n$th derivatives of $e_3$ by induction
$$|\partial^n e_3|\leq 4^n\cdot C^{n^2} \cdot n!\cdot e_3 \cdot \varphi_3^n.$$
In fact, assume that it holds for $1\leq n\leq k-1$, we are going to prove it for $n=k$. Take $n$th derivatives on each sides of (\ref{compe3}),
\begin{align} \nonumber
&\quad\quad  \partial^n(e_3^2)+\partial^n(e_3^{-2}) \\ \nonumber &=\sum_{n_1+n_2+n_3=n}\frac{n!}{n_1!n_2!n_3!}\partial^{n_1}(e_1^2)\cdot\partial^{n_2}(e_2^2)\cdot\partial^{n_3}(\sin^2\theta) \\ \nonumber &\quad\quad +\sum_{n_1+n_2+n_3=n}\frac{n!}{n_1!n_2!n_3!}\partial^{n_1}(e_1^2)\cdot\partial^{n_2}(e_2^{-2})\cdot\partial^{n_3}(\cos^2\theta) \\ \nonumber
&\quad\quad +\sum_{n_1+n_2+n_3=n}\frac{n!}{n_1!n_2!n_3!}\partial^{n_1}(e_1^{-2})\cdot\partial^{n_2}(e_2^2)\cdot\partial^{n_3}(\cos^2\theta)
\\ \label{nderivative} &\quad\quad +\sum_{n_1+n_2+n_3=n}\frac{n!}{n_1!n_2!n_3!}\partial^{n_1}(e_1^{-2})\cdot\partial^{n_2}(e_2^{-2})\cdot\partial^{n_3}(\sin^2\theta).
\end{align}
It is direct to see that the first term on the righthand side is the main term. By (\ref{nderivativeem2}) and (\ref{nderivativesin2}), we have
\begin{align}\nonumber
&\quad \quad \Big|\sum_{n_1+n_2+n_3=n}\frac{n!}{n_1!n_2!n_3!}\partial^{n_1}(e_1^2)\cdot\partial^{n_2}(e_2^2)\cdot\partial^{n_3}(\sin^2\theta) \Big|\\ \nonumber
&\leq \sum_{\substack{n_1+n_2+n_3=n\\ n_3\neq 0}}n!\cdot 2^{n_1}C^{n_1^2}\cdot e_1^2\cdot \varphi_1^{n_1} \cdot 2^{n_2}C^{n_2^2}\cdot e_2^2\cdot \varphi_2^{n_2} \cdot 3C^{n_3^2}\cdot \big(\max_{1\leq k\leq n_3} \mu_k\big)^{n_3}\\ \nonumber
&\hspace{6cm}+\sum_{n_1+n_2=n} n!\cdot 3C^{n_1^2}\cdot e_1^2\cdot \varphi_1^{n_1} \cdot 3C^{n_2^2}\cdot e_2^2\cdot \varphi_2^{n_2} \cdot \sin^2\theta\\ \nonumber
&\leq 4C^{n^2}\cdot n!\cdot e_1^2e_2^2\cdot\sum_{\substack{n_1+n_2+n_3=n\\ n_3\neq 0}} \varphi_1^{n_1}\varphi_2^{n_2} \big(\max_{1\leq k\leq n_3} \mu_k\big)^{n_3} \\ \label{nderivativeright} &\hspace{6cm}+3C^{n^2}\cdot n!\cdot e_1^2e_2^2\cdot\varphi_3^{n}\cdot \sin^2\theta.
\end{align}
By inductive assumption, we have
\begin{align}\label{nderivativeleft1}
\Bigg|\sum_{j=1}^{n-1} n!\cdot\frac{e_3^{(j)}}{j!}\cdot \frac{e_3^{(n-j)}}{(n-j)!}\Bigg|\leq 4^{n-1}\cdot C^{n^2}\cdot n!\cdot e_3^2\cdot \varphi_3^{n},
\end{align}
\beq\label{nderivativeleft2}
\Bigg|\sum_{\substack{k_1+2k_2+\cdots +nk_n=n\\ 2\leq k=k_1+k_2+\cdots +k_n}} \frac{n!(-1)^k (k+1)!}{k_1!k_2!\cdots k_n!e_m^{k+2}}\bigg(\frac{e_3^{(1)}}{1!}\bigg)^{k_1} \bigg(\frac{e_3^{(2)}}{2!}\bigg)^{k_1}\cdots \bigg(\frac{e_3^{(n)}}{n!}\bigg)^{k_n}\Bigg|\leq 4^{n-1}\cdot C^{n^2}\cdot n!\cdot e_3^{-2}\cdot \varphi_3^{n}.
\eeq
Also recall
$$e_3=e_1e_2\cdot |\sin \theta|+O(e_0^{-1})\gg 1.$$
Then by (\ref{nderivative}), (\ref{nderivativeright}), (\ref{nderivativeleft1}) and (\ref{nderivativeleft2}), we have
\begin{align*}&\quad \quad |e_3^{-1}\cdot \partial^n e_3|\\
&\leq 4C^{n^2} \cdot n! \sum_{\substack{n_1+n_2+n_3=n\\n_3\neq 0}}\varphi_1^{n_1}\varphi_2^{n_2} \big(\max_{1\leq k\leq n_3} \mu_k\big)^{n_3} |\sin \theta|^{-2}+3C^{n^2}\cdot n!\cdot\varphi_3^{n}+4^{n-1}\cdot C^{n^2}\cdot n!\cdot \varphi_3^{n}\\
&\leq 4C^{n^2} \cdot n! \sum_{\substack{n_1+n_2+n_3=n\\n_3\neq 0}}\varphi_1^{n_1}\varphi_2^{n_2} \big(|\sin \theta|^{-2}\cdot\max_{1\leq k\leq n} \mu_k\big)^{n_3}+3C^{n^2}\cdot n!\cdot \varphi_3^{n}+4^{n-1}\cdot C^{n^2}\cdot n!\cdot \varphi_3^{n}\\
&\leq 4^n\cdot C^{n^2} \cdot n!\cdot \varphi_3^n.
\end{align*}
Hence it holds that
$$|\partial^n e_3|\leq 4^n\cdot C^{n^2} \cdot n!\cdot e_3\cdot \varphi_3^n.$$

\

Next we consider the estimates for the angles. By (\ref{sformula}), the explicit expression of $s$ is given by
$$\frac\pi2-s(x)=\arctan \frac{\frac{w}{U}}{\sqrt{1+\frac{w^2}{U^2}}+1}.$$
By Fa\`a di Bruno's formula, we have
\begin{align*} \big|\partial^n(e_1^{-m})\big|
&\leq \Bigg|\sum_{\substack{k_1+2k_2+\cdots +nk_n=n\\ k=k_1+k_2+\cdots +k_n}} \frac{n!(-1)^k (m+k-1)!}{k_1!k_2!\cdots k_n!(m-1)!e_1^{m+k}}\bigg(\frac{e_1^{(1)}}{1!}\bigg)^{k_1} \bigg(\frac{e_1^{(2)}}{2!}\bigg)^{k_2}\cdots \bigg(\frac{e_1^{(n)}}{n!}\bigg)^{k_n} \Bigg|\\
&\leq C^{n^2}\cdot n!\cdot e_1^{-m}\cdot \varphi_1^n.
\end{align*}
Then by Leibniz formula, we have
\begin{align*}\left|\partial^n\left((1-e_1^{-4}e_2^{-4})\sin^2\theta +(e_2^{-4}-e_1^{-4})\cos^2\theta\right)\right|\leq C^{n^2}\cdot n!\cdot \tilde \varphi_3^n.\end{align*}
Again by Fa\`a di Bruno's formula, we have
\begin{align}\label{nderivativewU}
\left|\partial^n\Big((1-e_1^{-4}e_2^{-4})\sin^2\theta +(e_2^{-4}-e_1^{-4})\cos^2\theta\Big)^{-1}\right|\leq C^{n^2}\cdot n!\cdot \tilde \varphi_3^n.\end{align}
Recall
\begin{align*}
\frac{w}{U}=e_1^{-2}\cdot (1-e_2^{-4})\sin2\theta \cdot\Big((1-e_1^{-4}e_2^{-4})\sin^2\theta +(e_2^{-4}-e_1^{-4})\cos^2\theta\Big)^{-1}.
\end{align*}
Then combining (\ref{nderivativeem2}), (\ref{nderivativesin2}) and (\ref{nderivativewU}), we could apply Leibniz formula and Fa\`a di Bruno's formula to $\frac{w}{U}$ and then obtain
$$ \left|\partial^n\bigg(\frac{w}{U}\bigg)\right|\leq C^{n^2}\cdot n!\cdot e_1^{-2}\cdot \tilde \varphi_3^n.$$
Then it follows that
$$\left|\partial^n\bigg(\frac{w^2}{U^2}\bigg)\right|=\left|\sum_{j=0}^{n} n!\cdot\frac{(w/U)^{(j)}}{j!}\cdot \frac{(w/U)^{(n-j)}}{(n-j)!}\right|\leq 3 C^{n^2}\cdot n!\cdot e_1^{-4}\cdot \tilde \varphi_3^n.$$
Again by Fa\`a di Bruno's formula,
\begin{align*}
&\quad \quad \left|\partial^n\sqrt{1+\frac{w^2}{U^2}}\right|\\
&= \Bigg|\sum_{\substack{k_1+2k_2+\cdots +nk_n=n\\ k=k_1+k_2+\cdots +k_n}} \frac{n!(-1)^k (2k-3)!!}{k_1!k_2!\cdots k_n!2^{k}}\cdot\big(\frac wU\big)^{1-2k}\bigg(\frac{(w^2/U^2)^{(1)}}{1!}\bigg)^{k_1} \cdots \bigg(\frac{(w^2/U^2)^{(n)}}{n!}\bigg)^{k_n} \Bigg|\\
&\leq 4^n\cdot C^{n^2}\cdot n!\cdot e_1^{-\frac32}\cdot \tilde \varphi_3^{n},\quad \quad n\geq 1.
\end{align*}
Let $f_1:=\frac1{\sqrt{1+w^2/U^2}+1}$. Then
$$|\partial^n f_1|\leq  4^n\cdot C^{n^2}\cdot n!\cdot e_1^{-\frac32}\cdot \tilde \varphi_3^{n},\quad \quad n\geq 1.$$
It follows from Leibniz formula that
$$\left|\partial^n \Big(\frac{w}{U}\cdot f_1\Big)\right|=\left|\sum_{j=0}^{n} n!\cdot\frac{(w/U)^{(j)}}{j!}\cdot \frac{f_1^{(n-j)}}{(n-j)!}\right|\leq  4^n\cdot C^{n^2}\cdot n!\cdot e_1^{-\frac32}\cdot \tilde \varphi_3^{n},\quad \quad n\geq 1.$$
On the other hand, let
$f_2(x):=\arctan x$.
We have for any $n\in\mathbb Z_+$,
$$f_2^{(n)}(x) =\frac{(-1)^{n-1} (n-1)!}{2i\cdot (x-i)^n}-\frac{(-1)^{n-1} (n-1)!}{2i\cdot (x+i)^n},$$
where $i$ denotes the image unit.
Hence it holds that
$$\left|f_2^{(n)}\Big(\frac{w}{U}\cdot f_1\Big)\right|\leq  n!,\quad n\geq 1.$$
Recall that $\frac\pi2-s(x)=f_2\big(\frac{w}{U}\cdot f_1\big)$.
Then from Fa\`a di Bruno's formula, we have
$$|\partial^n s|=\left|\partial^nf_2\Big(\frac{w}{U}\cdot f_1\Big)\right|\leq \Big|\partial^n(f_2\circ f_1)\left(\frac wU\right)\Big|\leq 8^n\cdot C^{n^2}\cdot n!\cdot e_1^{-\frac32}\cdot \tilde \varphi_3^{n}.$$
This implies that
$$\|u_3-u_2\|_{C^k}\leq 8^k\cdot C^{k^2}\cdot k!\cdot e_2^{-\frac32}\cdot \tilde \varphi_3^{k},\quad \|s_3-s_1\|_{C^k}\leq 8^k\cdot C^{k^2}\cdot k!\cdot e_1^{-\frac32}\cdot \tilde \varphi_3^{k}.$$
\end{proof}

\begin{proof2}
The condition of Lemma \ref{lemma.hderivative} is a bit different from Lemma \ref{lemma.hderivative1}. However, we could proceed the similar process to obtain the conclusion. Since the proof is nearly the same, we omit it here.
\end{proof2}

\section*{Acknowledgements}
The authors are grateful to Professor Lingrui Ge, Jing Wang and Yiqian Wang for useful discussions.
J. Liang was supported by NSFC grant (12101311). J. You was  partially supported by National Key R\&D Program of China (2020 YFA0713300) and Nankai Zhide Foundation and  NSFC grant (11871286).


\begin{thebibliography}{10}
\bibitem{ALSZ} A. Avila, Y. Last, M. Shamis and Q. Zhou, \textit{On the abominable properties of the almost Mathieu operator with well-approximated frequencies}, Duke Math. J. \textbf{173} (2024), 603-672.


\bibitem{Bo99} J. Bochi, \textit{Discontinuity of the Lyapunov exponent for non-hyperbolic cocycles}, unpublished (1999).

\bibitem{Bo02} J. Bochi, \textit{Genericity of zero Lyapunov exponents}, Ergodic Theory Dynam. Systems \textbf{22} (2002), 1667-1696.

\bibitem{BJ02} J. Bourgain and S. Jitomirskaya, \textit{Continuity of the Lyapunov exponent for quasiperiodic operators with analytic potential}, J. Stat. Phys. \textbf{108} (2000), 1203-1218.

\bibitem{CGYZ} H. Cheng, L. Ge, J. You and Q. Zhou, \textit{Global rigidity results for ultra differential quasi-periodic cocycles and its spectral applications}, Adv. Math. \textbf{409} (2022), 1-65.

\bibitem{F} A. Furman, \textit{On the multiplicative ergodic theorem for the uniquely ergodic systems}, Ann. Inst. Henri Poincar\'e \textbf{33} (1997), 797-815.

\bibitem{GWYZ} L. Ge, Y. Wang, J. You and X. Zhao, \textit{Transition space for the continuity of the Lyapunov exponent of quasiperiodic Schr\"odinger cocycles}, arXiv:2102.05175 (2021).

\bibitem{GS01} M. Goldstein and W. Schlag, \textit{
 H\"older continuity of the integrated density of states for quasi-periodic Schr\"odinger equations and averages of shifts of subharmonic functions}, Ann. of Math. \textbf{2} (2001), 155-203.


\bibitem{HZ} R. Han and S. Zhang, \textit{Large deviation estimates and H\"older regularity of the Lyapunov exponents for quasi-periodic Schr\"odinger cocycles}, Int. Math. Res. Not. {\bf 3} (2022), 1666-1713.

\bibitem{J} S. Jitomirskaya, \textit{One-dimensional quasi-periodic operators: global theorey, duality, and sharp analysis of small denominators}, Proc. Int. Cong. Math. \textbf{2} (2022), 1090-1120.

\bibitem{JKS} S. Jitomirskaya, D. Koslover and M. Schulteis, \textit{Continuity of the Lyapunov exponent for general analytic quasiperiodic cocycles}, Ergodic Theory Dynam. Systems \textbf{29} (2009), 1881-1905.

\bibitem{JM11} S. Jitomirskaya and C. Marx, \textit{Continuity of the Lyapunov exponent for analytic quasi-periodic cocycles with singularities}, J. Fixed Point Theory Appl. \textbf{10} (2011), 129-146.

\bibitem{JM12} S. Jitomirskaya and C. Marx, \textit{Analytic quasi-periodic cocycles with singularities and the Lyapunov exponent of extended Harper’s model}, Comm. Math. Phys. \textbf{316} (2012), 237-267.

\bibitem{K} S. Klein, \textit{Anderson localization for the discrete one-dimensional quasi-periodic Schr\"odinger operator with potential defined by a Gevrey-class function}, J. Funct. Anal. {\bf 218} (2005) 255-292.

\bibitem{LWY} J. Liang, Y. Wang and J. You, \textit{H\"older continuity of Lyapunov exponent for a family of smooth Schr\"odinger cocycles}, Ann. Henri Poincar\'e {\bf 25} (2024), 1399-1444.

\bibitem{WY13} Y. Wang and J. You, \textit{Examples of discontinuity of Lyapunov exponent in smooth quasi-periodic cocycles}, Duke Math. J. {\bf 162} (2013), 2363-2412.

\bibitem{WY18} Y. Wang and J. You, \textit{The set of smooth quasi-periodic Schr\"odinger cocycles with positive Lyapunov exponent is not open}, Comm. Math. Phys. {\bf 362} (2018), 801-826.

\bibitem{WZ15} Y. Wang and Z. Zhang, \textit{Uniform positivity and continuity of Lyapunov exponents for a class of $C^2$ quasiperiodic Schr\"odinger cocycles}, J. Funct. Anal. {\bf 268} (2015), 2525-2585.

\bibitem{YZ} J. You and S. Zhang,  \textit{H\"older continuity of the Lyapunov exponent for analytic quasiperiodic Schr\"odinger cocycle with weak Liouville frequency}, Ergodic Theory Dynam. Systems. {\bf 34} (2014), 1395-1408.

\bibitem{Y} L. S. Young, \textit{Lyapunov exponents for some quasi-periodic cocycles}, Ergodic Theory Dynam. Systems {\bf 17} (1997), 483-504.

\end{thebibliography}
\end{document}